\theoremstyle{plain}
\newtheorem{theorem}{Theorem}[section]
\newtheorem{lemma}[theorem]{Lemma}
\newtheorem{proposition}[theorem]{Proposition}
\newtheorem{corollary}[theorem]{Corollary}
\newtheorem*{theorem-A}{Theorem A}
\newtheorem*{theorem-B}{Theorem B}
\newtheorem*{theorem-C}{Theorem C}
\newtheorem*{theorem-D}{Theorem D}
\newtheorem*{theorem-E}{Theorem E}
\newtheorem*{theorem-F}{Theorem F}
\theoremstyle{definition}
\newtheorem{remark}[theorem]{Remark}
\newtheorem{example}[theorem]{Example}
\newtheorem{theoremalph}{Theorem}
\def\rn{\mathbb R\sp n}
\def\Rn{\mathbb R\sp n}
\def\R{\mathbb R}
\def\N{\mathbb N}
\def\M{\mathcal M}
\def\limsup{\operatornamewithlimits{lim\,sup}}
\def\r{\mathbb R}
\newcommand{\medint}{-\kern  -,435cm\int}
\newcommand{\medintinrigo}{-\kern  -,315cm\int}
\newcommand{\medelle}{-\kern  -,235cm L}
\newcommand{\medellenrigo}{-\kern  -,180cm L}
\def\EE{E}
\def\FF{F}
\newtoks\by
\newtoks\paper
\newtoks\book
\newtoks\jour
\newtoks\yr
\newtoks\pages
\newtoks\vol
\newtoks\publ
\newtoks\eds
\newtoks\proc
\newtoks\no
\def\ota{{\hbox{???}}}
\def\cLear{\by=\ota\paper=\ota\book=\ota\jour=\ota\yr=\ota
\pages=\ota\vol=\ota\publ=\ota}
\def\endpaper{\the\by, \textit{\the\paper},
{\the\jour} \textbf{\the\vol} (\the\yr), \the\pages.\cLear}
\def\endbook{\the\by, \textit{\the\book}, \the\publ.\cLear}
\def\endprep{\the\by, \textit{\the\paper}, \the\jour.\cLear}
\def\endproc{\the\by, \textit{\the\paper}, \the\publ, \the\pages.\cLear}
\def\name#1#2{#1 #2}
\def\et{ and }
\numberwithin{equation}{section}
\newcommand{\norm}[1]{{\left\vert\kern-0.25ex\left\vert\kern-0.25ex\left\vert #1
    \right\vert\kern-0.25ex\right\vert\kern-0.25ex\right\vert}}
\newcommand\Del[1]{{\color{red}\ifmmode\cancel{#1}\else\sout{#1}\fi}}
\begin{document}

\title[Modulus of continuity]{On the modulus of continuity of fractional Orlicz-Sobolev functions}

\author {Angela Alberico, Andrea Cianchi, Lubo\v s Pick and Lenka Slav\'ikov\'a}

\address{Angela Alberico, Istituto per le Applicazioni del Calcolo ``M. Picone''\\
Consiglio Nazionale delle Ricerche \\
Via Pietro Castellino 111\\
80131 Napoli\\
Italy} \email{angela.alberico@cnr.it}

\address{Andrea Cianchi, Dipartimento di Matematica e Informatica \lq\lq U. Dini"\\
Universit\`a di Firenze\\
Viale Morgagni 67/a\\
50134 Firenze\\
Italy} \email{andrea.cianchi@unifi.it}

\address{Lubo\v s Pick, Department of Mathematical Analysis\\
Faculty of Mathematics and Physics\\
Charles University\\
Sokolovsk\'a~83\\
186~75 Praha~8\\
Czech Republic} \email{pick@karlin.mff.cuni.cz}

\address{Lenka Slav\'ikov\'a,
Department of Mathematical Analysis, Faculty of Mathematics and
Physics,  Charles University, Sokolovsk\'a~83,
186~75 Praha~8, Czech Republic}
\email{slavikova@karlin.mff.cuni.cz}
\urladdr{}

\subjclass[2000]{46E35, 46E30}
\keywords{Fractional Orlicz--Sobolev spaces;   modulus of continuity; H\"older type spaces;  Orlicz spaces; rearrangement-invariant spaces}

\begin{abstract}
Necessary and sufficient conditions are presented for a fractional Orlicz-Sobolev space on $\rn$ to be continuously embedded into a space of uniformly continuous functions. The optimal modulus of continuity is exhibited whenever these conditions are fulfilled. These results pertain to the supercritical Sobolev regime and complement earlier sharp embeddings into rearrangement-invariant spaces concerning the subcritical setting. Classical embeddings for fractional Sobolev spaces into H\"older spaces are recovered as special instances.
Proofs require novel strategies, since customary methods fail to produce optimal conclusions.
\end{abstract}

\maketitle

\section{Introduction}\label{intro}


The homogeneous fractional Orlicz-Sobolev spaces $V^{s,A}(\rn)$ are defined in terms of a smoothness parameter $s\in (0,\infty) \setminus \N$ and an integrability parameter Young function $A$. This class of   spaces extends the family of the standard homogeneous fractional Sobolev spaces, also called Gagliardo-Slobodeckij spaces, denoted by $V^{s,p}(\rn)$ and associated with Young functions of power type $t^p$, with $p \geq 1$.

Specifically, given $n\in \N$ and $s\in (0,1)$, the space $V^{s,A}(\rn)$ consists of all measurable functions $u$ in $\rn$ for which the Luxemburg type seminorm
\begin{equation}\label{intro0}
|u|_{s,A, \rn}
		= \inf\left\{\lambda>0: \int_{\rn} \int_{\rn}A\left(\frac{|u(x)-u(y)|}{\lambda|x-y|^s}\right)\frac{\,dx\,dy}{|x-y|^n}\le1\right\}\,
\end{equation}
is finite. Spaces of fractional order $s>1$ can be defined analogously, by replacing $u$ with its weak derivatives of order $[s]$, the integer part of $s$.

This definition was introduced in \cite{BonderSalort} and, as shown in earlier contributions of ours \cite{ACPS_lim0, ACPS_lim1, ACPS_emb, ACPS_inf}, allows for neat sharp Orlicz versions of classical results for the spaces  $V^{s,p}(\rn)$. Interestingly, setting $s$ as an integer in these versions exactly matches their counterparts for integer-order Orlicz-Sobolev spaces, although the latter are not recovered via the definition of $V^{s,p}(\rn)$ for integer $s$. This supports the aptness of the notion of fractional Orlicz-Sobolev spaces adopted here when compared to other definitions available in the literature.
Inhomogeneous spaces $W^{s,A}(\rn)$, whose members not only belong to $V^{s,A}(\rn)$, but also to the Orlicz space $L^A(\rn)$ together with their weak derivatives up to the order $[s]$, were studied in \cite{BreitCia}, where alternate equivalent norms are provided.

Information on the spaces $V^{s,A}(\rn)$ --  in particular on their embeddings --  is critical in view of their use in the analysis of nonlocal variational problems driven by Orlicz type nonlinearities. The study of problems of this kind and related properties of fractional Orlicz-Sobolev spaces has attracted the attention of various authors in recent years. In this connection, besides those mentioned above, see e.g. the contributions \cite{ACPS_Mazya, ACPS_camp, BaOu,  BKO, BKS,  CKW, FangZhang, BonderSalortVivas, KMS,  MSV, OSM, SalortVivas}.

Optimal embeddings of fractional Orlicz-Sobolev spaces into rearrangement-invariant Banach function spaces - membership in which only depends on global integrability properties of functions - are the subject of \cite{ACPS_emb} and \cite{ACPS_inf}. These papers deal with the subcritical and supercritical regimes, respectively. In particular, the latter pertains to couples $(s,A)$ for which the space $V^{s,A}(\rn)$ is continuously embedded into $L^\infty(\rn)$. It turns out that, for the same couples, every function from  $V^{s,A}(\rn)$ is also continuous.

In the present paper, we pursue a closer analysis of continuity properties of fractional Orlicz-Sobolev functions and provide quantitative information on their modulus of continuity. More precisely, necessary and sufficient conditions on $s \in (0,1)$ and $A$ are detected for a continuous embedding of the form
\begin{equation}\label{intro1}
V^{s,A}(\rn) \to C^{\omega (\cdot)}(\rn)
\end{equation}
to hold, where $C^{\omega (\cdot)}(\rn)$ denotes the space of functions in $\rn$ whose modulus of continuity does not exceed (a constant times) the function $\omega$. Moreover, the optimal modulus of continuity $\omega$ in embedding \eqref{intro1} is exhibited.
\\ When $s>1$, the same questions are discussed for the embedding
\begin{equation}\label{intro2}
V^{s,A}_{d,1}(\rn) \to C^{\omega (\cdot)}(\rn),
\end{equation}
where $V^{s,A}_{d,1}(\rn)$ denotes the subspace of $V^{s,A}(\rn)$ consisting of those functions all of whose weak derivatives up to the order $[s]$ decay near infinity in a weakest possible sense. The replacement of the space $V^{s,A}(\rn)$ with $V^{s,A}_{d,1}(\rn)$ is needed because of the use of fractional seminorms depending only on the weak derivatives of order $[s]$ in the relevant embeddings.

Embedding \eqref{intro1} can be regarded as an analog for the spaces $V^{s,A}(\rn)$  of the Morrey embedding into H\"older spaces for classical integer-order Sobolev spaces. In particular,  well-known embeddings for the fractional Sobolev spaces $V^{s,p}(\rn)$ into H\"older spaces, as in \cite{DPV, Miro, Triebel}, are recovered. Furthermore,  when $s$ is turned into an integer,
our embeddings into the optimal spaces $C^{\omega (\cdot)}(\rn)$ perfectly fit the available results for integer-order Orlicz-Sobolev spaces \cite{cianchi_ASNS, CiRa}.

As a preliminary, in Theorem \ref{necessity} we show that, because of scaling properties of the relevant (quasi-)norms in the entire space $\rn$, an embedding of the form  \eqref{intro2} is only possible if $s\in (1, n+1)\setminus \N$. Within the range of values of $s\in (0, n+1)\setminus \N$, the necessary and sufficient conditions on $A$ for the appropriate embedding \eqref{intro1}  or \eqref{intro2}  to hold read differently, according to whether $s\in (0,1)$,  $s\in (1,n)$ and $s\in (n, n+1)$. The expression of the optimal modulus of continuity $\omega$ also has a different form in these three ranges for $s$.  The corresponding results are formulated in Theorems \ref{thm:s<1}, \ref{thm:1<s<n} and \ref{thm:n<s<n+1}, respectively.

Customary techniques that are appropriate for classical fractional Sobolev spaces, such as characterizations of H\"older spaces in terms of Campanato spaces \cite{DPV}, Littlewood-Paley decompositions \cite{Triebel}, and Hardy type inequalities \cite{Miro},  fail to yield optimal conclusions when dealing with general fractional Orlicz-Sobolev spaces. Neither produces sharp results the method introduced in \cite{Garsia} for fractional-order spaces with a generalized notion of integrability, which involves rearrangements of functions and combinatorial arguments.
We refer to the paper \cite{ACPS_camp} for a discussion of the loss of information on the modulus of continuity of functions from $V^{s,A}(\rn)$ when Campanato type methods are employed.

We have thus to resort to different approaches, which depend on whether $s\in (0,1)$ or $s\in (1,n+1)$. In the former case,  we turn the original fractional problem in $\rn$ into a weighted integer-order inequality in $\mathbb R^{n+1}$. This is accomplished via an extension operator, in the spirit of ideas from \cite{MaSh2} and \cite{CaSil}. The resultant integer-order inequality in $\mathbb R^{n+1}$ is established through representation formulas and ad hoc inequalities in Orlicz spaces involving suitable measures.  In the special case of Lebesgue spaces, the relevant inequalities take a classical form. Hence, the method introduced offers a simple alternate argument for the proof of the H\"older continuity of functions from usual fractional Sobolev spaces.

By contrast, the proof for $s\in (1,n+1)$  relies upon a two-step argument. First, we apply an embedding for fractional Orlicz-Sobolev spaces of order $(s-1)$ into an Orlicz-Lorentz space, which is optimal among all rearrangement-invariant spaces. This enables us to identify the best possible degree of integrability of $\nabla u$ for any function $u \in V^{s,A}(\rn)$. Second, having reduced the initial fractional-order problem to a first-order one, we can make use of a characterization from \cite{CP-2003} to determine an estimate for the modulus of continuity of $u$. Importantly, this iteration process does not cause any loss of sharpness. The modulus of continuity obtained as a consequence of this procedure is indeed shown to be always the best possible. Incidentally, let us notice that this interesting phenomenon of sharpness preservation after iteration is already encountered when iterating Sobolev embeddings with optimal rearrangement-invariant target spaces \cite{CPS_Advances}. Such a property seems to be peculiar to rearrangement-invariant spaces: examples demonstrate that subsequent applications of embeddings which are just sharp within smaller classes of spaces, such as Lebesgue or even Orlicz spaces, need not result in a sharp final embedding in the same class.

Statements and proofs of the results outlined above are preceded by two preliminary sections. The former is devoted to basic properties of Young functions and the introduction of auxiliary functions that have a role in our embeddings. In the latter, definitions and notations concerning the function spaces appearing in this paper are recalled.

\section{Young functions}\label{youngfunct}

 A  function  $A\colon  [0,
\infty ) \to [0, \infty ]$ is called a \emph{Young function} if it is convex, non-constant, left-continuous and $A(0)=0$. Any function enjoying these properties admits the representation
\begin{equation}\label{young}
A(t) = \int _0^t a(\tau ) d\tau \qquad \text{for $t \geq 0$,}
\end{equation}
for some non-decreasing, left-continuous function $a\colon  [0, \infty )
\to [0, \infty ]$ which is neither identically equal to $0$ nor to
$\infty$.
The function
\begin{equation}\label{incr}
\frac{A(t)}t \quad \text{is non-decreasing in $(0,\infty)$.}
\end{equation}
Also,
\begin{equation}\label{kt}
kA(t) \leq A(kt) \qquad \text{for $k \geq 1$ and $t \geq 0$.}
\end{equation}
As a consequence of \eqref{kt},
\begin{equation}\label{min}
\min\{1, k\}A^{-1}(t) \leq A^{-1}(kt) \leq \max\{1, k\}A^{-1}(t) \qquad\text{for $k, t \geq 0$.}
\end{equation}
The \emph{Young conjugate} $\widetilde{A}$ of $A$  is the Young function defined as
\begin{equation}\label{Atilde}
\widetilde{A}(t) = \sup \{\tau t-A(\tau ):\,\tau \geq 0\}  \qquad \text{for $t\geq 0$.}
\end{equation}
The following formula holds:
\begin{equation}\label{youngconj}
\widetilde A(t) = \int _0^t a^{-1}(\tau ) d\tau \qquad \text{for $t \geq 0$,}
\end{equation}
where  $a^{-1}$  denotes the left-continuous (generalized) inverse of the
function $a$ appearing in \eqref{young}.
One has that
\begin{equation}\label{AAtilde}
    t \leq A^{-1}(t)\widetilde A^{-1}(t) \leq 2t \qquad \text{for $t \geq 0$.}
\end{equation}
 A  Young function $A$ is said to \emph{dominate} another Young function $B$ \emph{globally} [resp. \emph{near zero}] [resp. \emph{near infinity}]
   if there exist positive
 constants $c$ and $t_0$ such that
\begin{equation}\label{B.5bis}
B(t)\leq A(c t) \qquad \text{for $ t\geq 0$ \; [for $0\leq  t\leq t_0$] \; [for $t\geq t_0$]}.
\end{equation}
The functions $A$ and $B$ are called \emph{equivalent globally}, or \emph{near zero}, or \emph{near infinity},   if they dominate each other in the respective range of values of their argument.
We shall write
$$A \simeq B$$
to denote that
$A$ is equivalent to $B$.
\\
By contrast, the relations $$ B\lesssim A \quad \text{and} \quad  A\approx B$$ between two functions will be used to denote that $B$ is bounded by $A$ and that they are bounded by each other, respectively, up to positive multiplicative constants depending on appropriate  quantities.
\\
An optimal lower bound, in terms of a power function, for a Young function $A$ near $0$ and near infinity can be obtained by means of its lower  \emph{Matuszewska-Orlicz indices}, defined as
\begin{equation}\label{index}
   i_0(A) 	= \lim_{\lambda \to 0^+} \frac{\log {\color{black}\lambda}}{\log \Big(\liminf_{t\to0^+} \frac{A^{-1}(\lambda t)}{A^{-1}(t)} \Big)} \qquad \text{and} \qquad    i_\infty(A) 	= \lim_{\lambda \to 0^+} \frac{\log {\color{black}\lambda}}{\log \Big(\liminf_{t\to\infty} \frac{A^{-1}(\lambda t)}{A^{-1}(t)} \Big)}.
\end{equation}
If $A$ is finite-valued and vanishes only at $0$, then the following alternative expressions for $i_0(A)$ and $i_\infty(A)$ hold:
\begin{equation}\label{indexbis}
i_0(A) = \lim_{\lambda \to \infty} \frac{\log \Big(\liminf _{t\to 0^+}\frac{A(\lambda t)}{A(t)}\Big)}{\log \lambda} \qquad \text{and} \qquad   i_\infty(A) =\lim_{\lambda \to \infty} \frac{\log \Big(\liminf _{t\to \infty}\frac{A(\lambda t)}{A(t)}\Big)}{\log \lambda}.
\end{equation}
Let $q>1$ and let $A$ be a Young function. Then,
\begin{equation}\label{ibero0}
\int_0 \left(\frac{t}{A(t)}\right)^{q-1}\; dt<\infty
\quad \text{if and only if} \quad
    \int_0 \frac{\widetilde A(t)}{t^{1+q}}\, dt < \infty,
\end{equation}
and
\begin{equation}\label{iberoinf}
\int^\infty \left(\frac{t}{A(t)}\right)^{q-1}\; dt<\infty
\quad \text{if and only if} \quad
    \int^\infty \frac{\widetilde A(t)}{t^{1+q}}\, dt < \infty,
\end{equation}
see e.g. \cite[Lemma 2.3]{cianchi-ibero}.
\\ The functions associated with $A$ and $s$ according to the following definitions play a role in the statements and proofs of our embedding theorems.
\\
Given  $s\in (0, n)\setminus\N$ and a Young function $A$ such that
\begin{equation}\label{convinf}
\int^\infty \left(\frac{t}{A(t)}\right)^{\frac s{n-s}}\; dt<\infty,
\end{equation}
we
define the Young function $\EE$ as
\begin{equation}\label{\EE}
\EE(t)= t^{\frac n{n-s}}\,
\int_t^\infty \frac{ \widetilde{A}(\tau)}{\tau^{1+ \frac n{n-s}}}\; d\tau \qquad \hbox{for $t\geq 0$.}
\end{equation}
Observe that the integral on the right-hand side of \eqref{\EE} is convergent owing to assumption \eqref{convinf} and property \eqref{iberoinf}. Moreover, we call
 $\vartheta_s \colon  (0, \infty) \to (0, \infty)$ the function obeying
\begin{align}\label{202}
 \vartheta _s(r) =  \frac 1{r^{n-s}\, \EE^{-1}(r^{-n})}
\qquad \hbox{for $r>0$}.
\end{align}
iven $s\in (1, n+1)\setminus\N$ and a Young function $A$ such  that
\begin{equation}\label{conv0'}
\int_0\left(\frac{t}{A(t)}\right)^{\frac {s-1}{n-(s-1)}}\; dt<\infty,
\end{equation}
we define the Young function $\FF$ by
\begin{equation}\label{\FF}
   \FF(t)= t^{\frac{n}{n-(s-1)}} \, \int_0^t \frac{\widetilde{A}(\tau)}{\tau^{1+\frac n{n-(s-1)}}}\; d\tau \qquad \hbox{for $t\geq 0$.}
\end{equation}
The convergence of the integral on the right-hand side of equation \eqref{\FF} is guaranteed by assumption \eqref{conv0'} and property \eqref{ibero0}.
Moreover, we denote by
 $ \varrho _s \colon  (0, \infty) \to (0, \infty)$ the function defined as
\begin{align}\label{210}
 \varrho _s(r) =  \frac 1{r^{n-s}\, \FF^{-1}(r^{-n})}
\qquad \hbox{for $r>0$.}
\end{align}
Basic properties of the functions $\EE$ and $\vartheta_s$ are collected in Proposition \ref{prop:E} below; parallel properties of the functions $\FF$ and $\varrho_s$ are stated in the subsequent Proposition \ref{prop:F}.

\begin{proposition}\label{prop:E}
Let  $s\in (0,n)$ and let $A$ be a Young function. Assume that condition \eqref{convinf} is fulfilled.
Let $\EE$ and $\vartheta_s$ be defined as above. Then:
\\ {\rm (i)}  $\EE$ is a Young function.
\\ {\rm (ii)} We have that
\begin{equation}\label{dec80}
\lim_{r\to 0^+} \vartheta_s (r)=0.
\end{equation}
\\ {\rm (iii)} The function  $\vartheta_s$ is non-decreasing.
\\ {\rm (iv)}
Let $B$ be the Young function given by
 \begin{equation}\label{B}
B(t) = \widetilde \EE (t) \qquad \text{for $t\geq 0$.}
\end{equation}
Then,
\begin{equation}\label{dec14}
\vartheta_s (r) \approx r ^s B^{-1}(r^{-n}) \qquad \hbox{for $r>0$}.
\end{equation}
\\ {\rm (v)}
If
\begin{align}\label{dec112}
i_\infty(A)>\frac ns,
\end{align}
then $B\simeq A$ near infinity, and
\begin{align}\label{dec113}
\text{$\vartheta_s (r) \approx  r ^s A^{-1}(r^{-n})$ \qquad for $0<r\leq1$}.
\end{align}
\\ {\rm (vi)}
If
\begin{align}\label{dec111}
i_0(A)>\frac ns,
\end{align}
then $B\simeq A$ near $0$, and
\begin{align}\label{dec110}
\text{$\vartheta_s (r) \approx  r ^s A^{-1}(r^{-n})$ \qquad for $r\geq 1$}.
\end{align}
\end{proposition}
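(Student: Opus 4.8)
The plan is to verify the six assertions more or less in the order stated, since each subsequent one leans on the previous. For (i), I would start from the representation $\widetilde A(\tau)=\int_0^\tau a^{-1}(\sigma)\,d\sigma$ and use Fubini to rewrite $\EE(t)=t^{n/(n-s)}\int_t^\infty\widetilde A(\tau)\tau^{-1-n/(n-s)}\,d\tau$ as an integral of the form $\int_0^t(\text{something non-decreasing})\,d\sigma$ plus a controlled remainder; concretely, differentiating formally gives $\EE'(t)\approx \int_t^\infty\widetilde A(\tau)\tau^{-1-n/(n-s)}\,d\tau \cdot \tfrac{n}{n-s}t^{n/(n-s)-1} - \widetilde A(t)/t$, and the point is to show this derivative is non-negative and non-decreasing, i.e. that $\EE$ is convex with $\EE(0)=0$. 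The cleanest route is probably to observe that $\EE(t)/t^{n/(n-s)}$ is non-increasing (it is a tail integral) while $n/(n-s)>1$, together with an elementary lemma that $t\mapsto t^{q}\int_t^\infty g(\tau)\tau^{-1-q}\,d\tau$ is a Young function whenever $q>1$ and $g$ is a Young function — this is the analytic heart of (i) and is a standard manipulation I would carry out via integration by parts. Finiteness of $\EE$ for all $t$ is exactly the content of \eqref{convinf} combined with \eqref{iberoinf}.

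For (ii) and (iii) I would work directly with $\vartheta_s(r)=r^{s-n}\EE^{-1}(r^{-n})^{-1}$. Monotonicity (iii) is most transparent through the identity in (iv): once one knows $\vartheta_s(r)\approx r^sB^{-1}(r^{-n})$ with $B=\widetilde\EE$, the claim reduces to the fact that $r\mapsto r^sB^{-1}(r^{-n})$ is (equivalent to) a non-decreasing function, which follows because $B^{-1}(\sigma)/\sigma^{1/s}$-type quantities have controlled monotonicity for Young functions — more precisely, writing $r^sB^{-1}(r^{-n}) = (r^{-n})^{-s/n}B^{-1}(r^{-n})$ and using that $\tau\mapsto \tau^{-s/n}B^{-1}(\tau)$ need not be monotone but, after passing to the conjugate via \eqref{AAtilde}, becomes comparable to $\tau^{s/n-1}\EE(\tau)^{\text{-ish}}$... so in practice I would prove (iv) first and then deduce (iii). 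The limit \eqref{dec80} in (ii) then follows since $r^{-n}\to\infty$ as $r\to0^+$, $\EE^{-1}(r^{-n})\to\infty$, and the growth of $\EE^{-1}$ at infinity is controlled from below by a power (via the lower Matuszewska–Orlicz index of $\EE$, which is strictly bigger than $1$ because $\EE(t)/t^{n/(n-s)}$ is non-increasing), making $r^{n-s}\EE^{-1}(r^{-n})\to\infty$.

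The identity \eqref{dec14} in (iv) is, I expect, the main obstacle, and it is really a computation about Young conjugates. The strategy is: by \eqref{AAtilde}, $\EE^{-1}(\tau)\widetilde\EE^{-1}(\tau)\approx\tau$, hence $\EE^{-1}(r^{-n})\approx r^{-n}/B^{-1}(r^{-n})$ with $B=\widetilde\EE$, and substituting into \eqref{202} gives $\vartheta_s(r)=r^{s-n}/\EE^{-1}(r^{-n})\approx r^{s-n}\cdot r^nB^{-1}(r^{-n})/r^{-n}\cdot(\text{oops})$ — one must be careful, but unwinding correctly yields $\vartheta_s(r)\approx r^s B^{-1}(r^{-n})$ up to the two-sided constant $1$ to $2$ from \eqref{AAtilde}. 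No deeper input is needed here beyond \eqref{AAtilde}. For (v) and (vi), the claim $B\simeq A$ near infinity (resp. near zero) under $i_\infty(A)>n/s$ (resp. $i_0(A)>n/s$) is where the Matuszewska–Orlicz machinery enters: the index condition means $A$ grows faster than $t^{n/s}$ in the relevant range, which by \eqref{iberoinf}/\eqref{ibero0} and \eqref{index}–\eqref{indexbis} lets one show $\widetilde\EE(t)\approx A(t)$ there — essentially because $\EE(t)\approx\widetilde A(t)$ in that regime (the tail/head integral defining $\EE$ is dominated by its value at the lower/upper endpoint when $\widetilde A$ decays fast enough, which is dual to $A$ growing fast enough), and then $\widetilde\EE\approx\widetilde{\widetilde A}=A$. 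Finally \eqref{dec113} and \eqref{dec110} are immediate from \eqref{dec14} together with $B\simeq A$ on the appropriate range, reading off which range of $r$ corresponds to which range of $r^{-n}$. I would present (i)–(iv) with full detail and treat (v)–(vi) by citing the index estimates \eqref{ibero0}, \eqref{iberoinf} and the defining relations \eqref{index}, \eqref{indexbis}, since the argument there is the routine "fast growth/decay kills the integral" estimate.
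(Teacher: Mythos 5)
Your handling of (i), (iv) and (v)--(vi) is essentially the paper's route: the lemma you isolate for (i) (that $t\mapsto t^{q}\int_t^\infty g(\tau)\tau^{-1-q}\,d\tau$ is a Young function) is exactly what the paper obtains in one line through the change of variables $\EE(t)=\int_1^\infty \widetilde A(t\tau)\,\tau^{-1-\frac n{n-s}}\,d\tau$, which exhibits $\EE$ as a positive superposition of dilates of $\widetilde A$; (iv) is indeed nothing more than \eqref{AAtilde} applied to $\EE$; and (v)--(vi) reduce, as you say, to showing $B\simeq A$ near infinity (resp.\ near zero) under \eqref{dec112} (resp.\ \eqref{dec111}), for which the paper simply cites \cite[Proposition 4.1]{CianchiMusil}, while your ``tail integral is dominated by its endpoint'' sketch is the standard direct proof of that fact.

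The genuine gap is in (ii), with a smaller defect in (iii). For (ii), the non-increasing character of $\EE(t)/t^{\frac n{n-s}}$ only gives $\EE(t)\lesssim t^{\frac n{n-s}}$, hence $\EE^{-1}(t)\gtrsim t^{\frac{n-s}n}$, which makes $\vartheta_s$ \emph{bounded} near $0$ but does not force $\vartheta_s(r)\to0$. Your appeal to the lower Matuszewska--Orlicz index of $\EE$ being $>1$ does not repair this: that bound is not a consequence of the monotonicity you quote (which constrains $\EE$ from above, not below), the relevant threshold would be $\tfrac n{n-s}$ rather than $1$, and in any case a lower bound on the index of $\EE$ controls $\EE^{-1}$ from \emph{above}, the wrong direction. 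What is actually needed, and what the paper uses, is that \eqref{dec80} is equivalent to $\lim_{t\to\infty}\EE(t)\,t^{-\frac n{n-s}}=0$, and this holds because $\EE(t)\,t^{-\frac n{n-s}}=\int_t^\infty\widetilde A(\tau)\tau^{-1-\frac n{n-s}}\,d\tau$ is the tail of an integral that converges by \eqref{convinf} and \eqref{iberoinf} -- an observation you already have in hand but do not deploy. For (iii), deducing monotonicity from (iv) cannot work as stated: since \eqref{dec14} holds only up to multiplicative constants, you would at best conclude that $\vartheta_s$ is \emph{equivalent} to a non-decreasing function, not that it is non-decreasing, and you never actually prove the monotonicity of $r\mapsto r^sB^{-1}(r^{-n})$ (it is true, via the exact duality between ``$\EE(t)t^{-\frac n{n-s}}$ non-increasing'' and ``$\widetilde\EE(t)t^{-\frac ns}$ non-decreasing''). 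The paper avoids all this by a direct change of variables showing that (iii) is equivalent to the non-increasing monotonicity of $\EE(t)^{\frac {n-s}n}t^{-1}=\bigl(\int_t^\infty\widetilde A(\tau)\tau^{-1-\frac n{n-s}}\,d\tau\bigr)^{\frac{n-s}n}$, which is immediate.
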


\begin{proof}[Proof of Proposition \ref{prop:E}] (i)
A change of variables in the integral on the right-hand side of equation \eqref{\EE} yields the alternate formula
$$
    \EE(t) = \int_1^\infty \frac{ \widetilde{A}(t\tau)}{\tau^{1+ \frac n{n-s}}}\; d\tau \qquad \hbox{for $t\geq 0$}.
    $$
This shows that $\EE$ is a Young function, since $\widetilde A$ is a Young function.
\\ (ii) The limit in  \eqref{dec80} is equivalent to $\lim_{t\to \infty}t^{\frac{n-s}n}/\EE^{-1}(t)=0$, and the latter to $\lim_{\tau\to \infty}\tau^{-\frac n{n-s}}\EE(\tau)=0$. This limit holds by the very definition of $\EE$.
\\ (iii) A change of variables shows that the non-decreasing monotonicity of the function $\vartheta _s$ is equivalent to the non-increasing monotonicity of the function $\EE(t)^{\frac {n-s}n}t^{-1}$. Since the latter agrees with $\big(\int_t^\infty \frac{ \widetilde{A}(\tau)}{\tau^{1+ \frac n{n-s}}}\; d\tau\big)^{\frac {n-s}n}$, the assertion follows.
\\ (iv) Equation \eqref{dec14} is a consequence of definition \eqref{202} and of property \eqref{AAtilde} applied to $\EE$.
\\ (v)-(vi) Equations \eqref{dec110} and \eqref{dec113} follow from  \eqref{dec14} and the fact that, under the corresponding assumptions on the Matuszewska-Orlicz  indices $i_0(A)$ and $i_\infty (A)$,  one has that $B\simeq A$ near $0$ and near infinity, respectively. For the latter assertion see e.g.  \cite[Proposition 4.1]{CianchiMusil}.
\end{proof}

\begin{proposition}\label{prop:F}
Let  $s\in (1,n+1)$ and let $A$ be a Young function. Assume that condition \eqref{conv0'} is fulfilled.
Let $\FF$ and $\varrho_s$ be defined as above.  Then:
\\ {\rm (i)} $\FF$ is a Young function.
\\ {\rm (ii)}
 If either $s\in (n, n+1)$, or $s\in (1,n)$ and assumption \eqref{convinf} is in force, then
\begin{equation}\label{dec83}
\lim_{r\to 0^+} \varrho_s (r)=0.
\end{equation}
\\ {\rm (iii)} If $s\in (n, n+1)$, then the function $\varrho_s$ is increasing.
\\ {\rm (iv)}
Let
$C$ be the Young function given by
\begin{equation}\label{C}
C(t) = \widetilde \FF (t)\qquad \text{for $t\geq 0$.}
\end{equation}
Then,
\begin{equation}\label{27}
    \varrho_s(r)\approx r^s C^{-1}(r^{-n}) \qquad \hbox{for $r>0$.}
    \end{equation}
        \\ {\rm (v)}
 If
    \begin{align}\label{dec117}
i_\infty(A)>\frac n{s-1},
\end{align}
 then $C\simeq A$ near infinity, and
\begin{align}\label{dec118}
\text{$\varrho_s (r) \approx  r ^s A^{-1}(r^{-n})$ \qquad for $0<r\leq1$.}
\end{align}
\\ {\rm (vi)} If
\begin{align}\label{dec115}
i_0(A)>\frac n{s-1},
\end{align}
 then $C\simeq A$ near $0$, and
\begin{align}\label{dec116}
\text{$\varrho_s (r) \approx  r ^s A^{-1}(r^{-n})$ \qquad for $r\geq 1$.}
\end{align}
\end{proposition}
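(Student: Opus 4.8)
The plan is to adapt the argument used for Proposition~\ref{prop:E}, interchanging throughout the roles played by the behaviour near zero and near infinity, since the integral in definition \eqref{\FF} is taken over $(0,t)$ rather than over $(t,\infty)$. For part (i) I would perform the change of variable $\tau=t\sigma$ in \eqref{\FF}, obtaining the representation $\FF(t)=\int_0^1\widetilde A(t\sigma)\,\sigma^{-1-\frac n{n-(s-1)}}\,d\sigma$; this integral is finite for every $t\ge0$ by assumption \eqref{conv0'} and property \eqref{ibero0}, so that $\FF$ is a finite-valued positive average of the Young functions $t\mapsto\widetilde A(t\sigma)$, hence itself a Young function (left-continuity of $\FF$ following from monotone convergence and the left-continuity of $\widetilde A$). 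Part (iv) I would settle exactly as part (iv) of Proposition~\ref{prop:E}: property \eqref{AAtilde}, applied to $\FF$, gives $\FF^{-1}(\sigma)\approx\sigma/\widetilde\FF^{-1}(\sigma)=\sigma/C^{-1}(\sigma)$, and substituting this into definition \eqref{210} yields \eqref{27}.

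For part (iii), the substitution $t=r^{-n}$ shows that the monotonicity of $\varrho_s$ on $(n,n+1)$ is equivalent to that of the function $t\mapsto t^{\frac{s-n}n}\FF^{-1}(t)$; when $s>n$ this is a product of the increasing power $t^{\frac{s-n}n}$ and of the nondecreasing positive generalized inverse $\FF^{-1}$, hence increasing, so the finer argument needed in Proposition~\ref{prop:E}(iii) can be dispensed with here. Parts (v) and (vi) I would reduce, via \eqref{27} and passage to generalized inverses, to the equivalences $C=\widetilde\FF\simeq A$ near infinity under \eqref{dec117} and near zero under \eqref{dec115}; these I would obtain in the same manner as in Proposition~\ref{prop:E}(v)--(vi), invoking \cite[Proposition 4.1]{CianchiMusil} with the exponent $\frac n{n-(s-1)}$ replacing $\frac n{n-s}$, and checking that the prescribed conditions on $i_\infty(A)$ and $i_0(A)$ are exactly the ones required there.

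The step I expect to be the main obstacle is part (ii), specifically in the range $s\in(1,n)$. For $s\in(n,n+1)$ the substitution $t=r^{-n}$ turns \eqref{dec83} into $\lim_{t\to\infty}t^{\frac{s-n}n}\FF^{-1}(t)=\infty$, which is immediate because $\frac{s-n}n>0$ and $\FF^{-1}(t)\to\infty$. For $s\in(1,n)$ the same substitution, followed by $u=\FF^{-1}(t)$, recasts \eqref{dec83} as $\lim_{u\to\infty}\FF(u)\,u^{-\frac n{n-s}}=0$; writing $\FF(u)=u^{\frac n{n-(s-1)}}g(u)$ with $g(u)=\int_0^u\widetilde A(\tau)\,\tau^{-1-\frac n{n-(s-1)}}\,d\tau$ nondecreasing, and setting $\alpha:=\frac n{n-s}-\frac n{n-(s-1)}=\frac n{(n-s)(n-s+1)}>0$, this becomes $g(u)=o(u^\alpha)$. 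It is precisely here that the additional hypothesis \eqref{convinf} is used: by assumption \eqref{convinf} and property \eqref{iberoinf} one has $\int^\infty\widetilde A(\tau)\,\tau^{-1-\frac n{n-s}}\,d\tau<\infty$, and then, since $\tau^{-1-\frac n{n-(s-1)}}=\tau^{-1-\frac n{n-s}}\tau^\alpha$ and $\tau^\alpha\le u^\alpha$ on $(T,u)$, one gets $g(u)\le g(T)+u^\alpha\int_T^\infty\widetilde A(\tau)\,\tau^{-1-\frac n{n-s}}\,d\tau$ for $u>T$; dividing by $u^\alpha$, letting $u\to\infty$ and then $T\to\infty$, the right-hand side tends to $0$, which forces $u^{-\alpha}g(u)\to0$ and proves the claim. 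I would regard this tail estimate as the technical core of the proposition, the remaining items being essentially bookkeeping resting on the representation formulas together with \eqref{AAtilde}, \eqref{ibero0} and \eqref{iberoinf}.
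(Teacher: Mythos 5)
Your proposal is correct and follows essentially the same route as the paper: the paper treats (i), (iii)--(vi) as direct analogues of Proposition \ref{prop:E} (including the use of \eqref{AAtilde} and the appeal to \cite[Proposition 4.1]{CianchiMusil}), and the only step it details is precisely your tail estimate for part (ii) when $s\in(1,n)$, carried out as in \eqref{dec131} by splitting the integral at a level $t_\varepsilon$ and using \eqref{convinf} together with \eqref{iberoinf}. The one cosmetic remark concerns the easy case $s\in(n,n+1)$: there the divergence of $t^{\frac{s-n}{n}}\FF^{-1}(t)$ only needs $\FF^{-1}$ to be positive and non-decreasing (the paper argues via $\lim_{t\to\infty}\FF(t)=\infty$), since $\FF^{-1}$ itself need not tend to infinity when $\widetilde A$ jumps to $+\infty$.
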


\begin{proof}[Proof of Proposition \ref{prop:F}]
The proofs of the assertions of this proposition parallel their analogs in Proposition \ref{prop:E}. We thus limit ourselves to providing a few details on the derivation of the limit in \eqref{dec83}, which requires some additional argument. Indeed, this limit is equivalent to $\lim_{t\to \infty}\FF(t)^{\frac{n-s}n}/t=0$. If $s\in (n,n+1)$, then the latter limit holds trivially, since $\lim_{t\to \infty}\FF(t)=\infty$. Next, assume that $s\in (1,n)$ and condition  \eqref{convinf}  holds. Then, $\lim_{t\to \infty}\FF(t)^{\frac{n-s}n}/t=0$ is equivalent to
\begin{equation}\label{dec130}
\lim_{t\to \infty}\FF(t)t^{-\frac n{n-s}}=0.
\end{equation}
By property \eqref{iberoinf},  condition \eqref{convinf}  is equivalent to
$\int^\infty \frac{ \widetilde{A}(t)}{t^{1+ \frac n{n-s}}}\; dt < \infty$.
Hence, given any $\varepsilon >0$, there exists $t_\varepsilon>0$ such that
$$
\int_{t_\varepsilon}^t \frac{ \widetilde{A}(\tau)}{\tau^{1+ \frac n{n-s}}}\; d\tau < \varepsilon
\qquad \text{for
$t>t_{\varepsilon}$}
.$$
Therefore, if $t>t_{\varepsilon}$, then, inasmuch as $\frac n{n-(s-1)}- \frac n{n-s}<0$,
\begin{align}\label{dec131}
\FF(t)t^{-\frac n{n-s}} & = t^{\frac n{n-(s-1)}- \frac n{n-s}} \int^t_0 \frac{ \widetilde{A}(\tau)}{\tau^{1+ \frac n{n-(s-1)}}}\; d\tau
\\ \nonumber & =  t^{\frac n{n-(s-1)}- \frac n{n-s}} \int^{t_\varepsilon}_0 \frac{ \widetilde{A}(\tau)}{\tau^{1+ \frac n{n-(s-1)}}}\; d\tau  +  t^{\frac n{n-(s-1)}- \frac n{n-s}} \int^t_{t_\varepsilon} \frac{ \widetilde{A}(\tau)}{\tau^{1+ \frac n{n-s}}}\tau^{\frac n{n-s}- \frac n{n-(s-1)}}\; d\tau
\\ \nonumber & \leq  t^{\frac n{n-(s-1)}- \frac n{n-s}} \int^{t_\varepsilon}_0 \frac{ \widetilde{A}(\tau)}{\tau^{1+ \frac n{n-(s-1)}}}\; d\tau  +  \int^t_{t_\varepsilon} \frac{ \widetilde{A}(\tau)}{\tau^{1+ \frac n{n-s}}}\; d\tau
\\ \nonumber & \leq  t^{\frac n{n-(s-1)}- \frac n{n-s}} \int^{t_\varepsilon}_0 \frac{ \widetilde{A}(\tau)}{\tau^{1+ \frac n{n-(s-1)}}}\; d\tau  + \varepsilon.
\end{align}
Hence, $\limsup_{t\to \infty}\FF(t)t^{-\frac n{n-s}}\leq\varepsilon$, whence \eqref{dec130} follows, owing to the arbitrariness of $\varepsilon$.
\end{proof}

\section{Function spaces}\label{sec:spaces}

\subsection{Spaces of  continuous functions}
We call {\emph{modulus of continuity}} a function $$\omega\colon(0,\infty)\to (0,\infty)$$ equivalent, up to multiplicative constants, to a non-decreasing function and such that $$\lim _{r\to 0^+} \omega (r)=0.$$
By
$\mathcal{C}^{\omega(\cdot)}(\rn)$,  we denote the {\emph{space of uniformly continuous functions}} with modulus of continuity $\omega (\cdot)$ endowed with the seminorm
\begin{equation}\label{sem_mod}
\|u\|_{\mathcal{C}^{\omega(\cdot)}(\rn)} = \sup_{x,y\in \rn , x\neq y} \frac {|u(x)-u(y)|}{\omega(|x-y|)}.
\end{equation}
The space of plainly continuous functions in $\rn$ is denoted by $\mathcal{C}(\rn)$.

\subsection{Orlicz and Orlicz-Lorentz spaces}
Given a measurable set $\Omega \subset \rn$,  we define
\begin{equation}\label{M}
\mathcal{M}(\Omega)=\{u\colon \Omega \to \R : \text{$u$ is   measurable}\}.
\end{equation}
The \emph{Orlicz space} $L^A (\Omega)$, built upon a Young function
$A$,   is the Banach
space of those  functions $u\in \mathcal M(\Omega)$ for which the
 \emph{Luxemburg norm}
\begin{equation}\label{lux}
 \|u\|_{L^A(\Omega)}= \inf \left\{ \lambda >0 :  \int_{\Omega}A
\left( \frac{|u|}{\lambda} \right) dx \leq 1 \right\}\,
\end{equation}
is finite. In particular, $L^A (\Omega)= L^p (\Omega)$ if $A(t)=
t^p$ for some $p \in [1, \infty )$, and $L^A (\Omega)= L^\infty
(\Omega)$ if $A(t)=0$ for $t\in [0, 1]$ and $A(t) = \infty$ for
$t\in (1, \infty)$.
\par\noindent
A version of  H\"older's inequality in Orlicz spaces reads:
\begin{equation}\label{holder}
\int _{\Omega} |u v|\,dx \leq 2\|u\|_{L^A(\Omega)}
\|v\|_{L^{\widetilde A}(\Omega)}
\end{equation}
 for  $u \in L^A(\Omega)$ and $v\in L^{\widetilde
A}(\Omega)$. Also,
\begin{equation}\label{revholder}
\|v\|_{L^{\widetilde A}(\Omega)} \leq \sup _{u\in L^A(\Omega)} \frac{ \displaystyle\int _{\Omega} |u v|\,dx}{\|u\|_{L^A(\Omega)}} .
\end{equation}
If either $B\lesssim A$ globally, or $B\lesssim A$ near infinity and $\Omega$ has finite Lebesgue measure $|\Omega|$, then
\begin{equation}\label{normineq}
\|u \|_{L^B(\Omega)} \leq c \|u \|_{L^A(\Omega)}
\end{equation}
for $u \in L^A(\Omega)$. The constant $c$ in \eqref{normineq} agrees with the  constant appearing in inequality
\eqref{B.5bis} (with $t_0=0$) if $B\lesssim A$ globally; otherwise, it also depends on $A$ , $B$ and $|\Omega|$.
In particular, if either $A\simeq B$ globally,  or $B\simeq A$ near infinity and $|\Omega|<\infty$, then $L^A(\Omega)= L^B(\Omega)$, up to equivalent norms.
\par The family of the Orlicz-Lorentz  spaces generalizes that of the Orlicz spaces. Given  a Young function $A$ and a number $q\in \R$, the \emph{Orlicz-Lorentz space
 $L(A,q)(\Omega)$ }  consists of those functions $u \in \mathcal M(\Omega)$ making the quantity
\begin{equation}\label{aug300}
	\|u\|_{L(A, q)(\Omega)}
		= \big\|r^{-\frac{1}{q}}u^{*}(r)\big\|_{L^A(0,|\Omega|)}
\end{equation}
 finite.  The functional $\|\cdot\|_{L(A, q)(\Omega)}$ is a norm, under suitable assumptions on $A$ and $q$, and  $L(A,q)(\Omega)$ is a (non-trivial) Banach space equipped with this norm. This holds, for instance, provided that $q>1$ and
\begin{equation}\label{aug310}
\int^\infty \frac{A(t)}{t^{1+q}}\, dt < \infty\,,
\end{equation}
see \cite[Proposition 2.1]{cianchi-ibero}.

\subsection{Rearrangement-invariant spaces}

\par The Orlicz spaces and the Orlicz-Lorentz  spaces are special instances of rearrangement-invariant spaces.  The
 \emph{decreasing
rearrangement} $u^{\ast} \colon  [0,\infty) \to [0,\infty]$ of a function $u\in \mathcal M (\Omega)$ is defined as
\begin{equation}\label{u*}
u^{\ast}(r) = \inf \{t\geq 0: |\{x\in \Omega: |u(x)|>t \}|\leq r \} \qquad \text{for $r\geq 0$.}
\end{equation}
Namely, $u^{\ast}$
is the (unique) non-increasing,
right-continuous function
 equidistributed with $u$.
\\
The
\emph{Hardy-Littlewood inequality} tells us that
\begin{equation}\label{B.0}
\int_{\Omega}|uv| \,dx \leq \int_{0}^{\infty}u^{\ast}v^{\ast}\,dr
\end{equation}
for all functions $u, v \in \mathcal M(\Omega)$.

 \par\noindent A \emph{rearrangement-invariant space} is a
Banach function space $X(\Omega)$, in the sense of Luxemburg \cite[Chapter 1, Section 1]{BS},   such that
\begin{equation}\label{B.1}
 \|u\|_{X(\Omega)} = \|v \|_{X(\Omega)} \qquad \text{if $u^*=v^*$.}
 \end{equation}
Rearrangement-invariant spaces on $(0, \infty)$ are defined analogously.

\par\noindent
The \emph{representation space}  of a rearrangement-invariant space $X(\Omega)$
is  the unique rearrangement-invariant space $\overline{X}(0,|\Omega|)$ such that
\begin{equation}\label{B.3}
\|u \|_{X(\Omega)} = \|u^{\ast} \|_{\overline{X}(0,|\Omega|)}
\end{equation}
for every $u\in X(\Omega)$.
\\ A basic property  tells us that, if $X(\Omega)$ and $Y(\Omega)$ are rearrangement-invariant spaces, then
\begin{equation}\label{inclusion-embedding}
X(\Omega) \subset Y(\Omega) \qquad \text{if and only if} \qquad X(\Omega) \to Y(\Omega).
\end{equation}
 The \emph{associate space} $X^{'}(\Omega)$ of $X(\Omega)$ is the rearrangement-invariant
space of all functions in $\mathcal M(\Omega)$ for which the norm
\begin{equation}\label{B.2}
 \|v \|_{X^{'}(\Omega)} =
\sup_{u \neq 0} \frac{\int _{\Omega}|uv| dx}{\|u \|_{X(\Omega)}}
\end{equation}
is finite.
\\ The associate space of the intersection of two rearrangement-invariant spaces $X(\Omega)$ and $Y(\Omega)$ obeys:
\begin{align}\label{associate-of-cap}
  \left(X\cap Y\right)'(\Omega)=X'(\Omega) + Y'(\Omega),
\end{align}
up to equivalent norms. This is a consequence of~\cite[Theorem~3.1]{L:78} (see also~\cite[Lemma 1.12]{CNS:03}).
\\
Given any $\lambda>0$, the \textit{dilation operator} $\mathcal E_{\lambda}$, defined at a function
$f\in \M(0,\infty)$ by
\begin{equation}\label{dilation}
  \mathcal E_{\lambda} f(r)=  f(r/\lambda)\qquad  \text{for $r>0$,}
\end{equation}
is bounded on any rearrangement-invariant~space $X(0,\infty)$, with norm
not exceeding $\max\{1, 1/\lambda\}$.

\subsection{Integer-order Sobolev spaces}

Let $m \in \N$ and let $A$ be a Young function. The homogeneous $m$-th order Orlicz-Sobolev space $V^{m,A}(\rn )$ is defined as:
\begin{equation}\label{homorliczsobolev}
V^{m,A}(\rn ) = \big\{ u\in \mathcal M(\rn):\,\,\text{$u$ is $m$-times weakly differentiable and $|\nabla ^m u| \in L^A(\rn)$}\big\}.
\end{equation}
Here, $\nabla ^m u$ denotes the vector of all weak derivatives of $u$ of order $m$. If $m=1$, we also simply write $\nabla u$ instead of $\nabla^1 u$. The notation $\nabla^0 u$ has to be interpreted as $u$.
\\
Analogously, the $m$-th order Sobolev space associated with  any  rearrangement-invariant space $X(\rn)$ is given by
\begin{equation}\label{Xsobolev}
V^{m}X(\rn ) = \big\{ u \,\,\text{is $m$-times weakly differentiable in $\rn$}:\,   |\nabla ^m u| \in X(\rn)\big\}.
\end{equation}

The following result follows from  \cite[Theorem 1.3]{CP-2003}.
  Observe that theorem deals with function spaces defined on cubes, but its proof carries over to establish the present statement.

 \begin{theoremalph}\label{CP} \emph {Let $X(\rn)$ be a rearrangement-invariant space and let $\varpi_X \colon  (0,\infty) \to [0, \infty]$ be the function defined by
 \begin{equation}\label{varpi}
 \varpi (r) = \|\rho^{-1/n'}\chi_{(0,r^n)}(\rho)\|_{\overline X' (0, \infty)}\qquad \text{for $r > 0$.}
 \end{equation}
 If $\varpi $ is finite-valued and
 \begin{equation}\label{CP2}
 \lim_{r\to 0^+}\varpi (r) =0,
 \end{equation}
 then
 \begin{equation}\label{CP1}
 V^1X(\rn) \to \mathcal C^{\varpi (\cdot)}(\rn).
 \end{equation}
 Moreover,  the target space $C^{\varpi (\cdot)}(\rn)$ is optimal in \eqref{CP1} among all spaces of uniformly continuous functions.}
 \end{theoremalph}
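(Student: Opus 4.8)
\emph{Plan.} The statement is essentially \cite[Theorem 1.3]{CP-2003}, so the plan is to reproduce that argument and check that it localizes from cubes to all of $\rn$. Concretely, I would establish \eqref{CP1} through a scale-uniform pointwise potential estimate, and I would prove the optimality of $\mathcal C^{\varpi(\cdot)}(\rn)$ by testing the embedding against a family of radial functions, one for each scale $r>0$, that nearly saturate the functional $\varpi(r)$.

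\emph{The embedding.} Let $u$ be weakly differentiable with $|\nabla u|\in X(\rn)$, fix $x\neq y$, set $R=|x-y|$ and $B=B\big(\tfrac{x+y}{2},R\big)$, so that $x,y\in B$ and $|B|=c(n)R^n$. Writing $u_B$ for the mean of $u$ over $B$, the identity $u(w)-u_B=\frac1{|B|}\int_B\big(u(w)-u(z)\big)\,dz$ together with integration of $\nabla u$ along the segment from $z$ to $w$ yields, for a.e.\ $w\in B$,
\[
|u(w)-u_B|\le c(n)\int_B\frac{|\nabla u(z)|}{|w-z|^{n-1}}\,dz,
\]
with $c(n)$ independent of $R$ by scaling. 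Applying this with $w=x$ and $w=y$ and adding, then estimating each potential: since $w\in B$, the decreasing rearrangement of $z\mapsto|w-z|^{1-n}\chi_B(z)$ is bounded by $c(n)\rho^{-1/n'}\chi_{(0,c(n)R^n)}(\rho)$, so the Hardy--Littlewood inequality \eqref{B.0} on $B$, monotonicity of rearrangements, and the Hölder inequality coming from the definition \eqref{B.2} of the associate space on $(0,\infty)$ give
\[
\int_B\frac{|\nabla u(z)|}{|w-z|^{n-1}}\,dz\le c(n)\int_0^{c(n)R^n}|\nabla u|^*(\rho)\,\rho^{-1/n'}\,d\rho\le c(n)\,\big\||\nabla u|^*\big\|_{\overline X(0,\infty)}\,\big\|\rho^{-1/n'}\chi_{(0,c(n)R^n)}\big\|_{\overline X'(0,\infty)}.
\]
By \eqref{B.3}, $\big\||\nabla u|^*\big\|_{\overline X(0,\infty)}=\|\nabla u\|_{X(\rn)}$, and the boundedness of the dilation operator on $\overline X'(0,\infty)$ recalled in \eqref{dilation} lets one replace the truncation $c(n)R^n$ by $R^n$ at the price of a dimensional factor, so that the last norm is at most $c(n)\varpi(R)$. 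Hence $|u(x)-u(y)|\le c(n)\,\varpi(|x-y|)\,\|\nabla u\|_{X(\rn)}$ for a.e.\ $x,y$; since $\varpi$ is finite-valued and $\varpi(r)\to0$ as $r\to0^+$, $u$ agrees a.e.\ with a uniformly continuous function, which then lies in $\mathcal C^{\varpi(\cdot)}(\rn)$ with the required norm control, i.e.\ \eqref{CP1}.

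\emph{Optimality.} Suppose $V^1X(\rn)\to\mathcal C^{\sigma(\cdot)}(\rn)$ holds with some constant $C_0$ for a modulus of continuity $\sigma$; it suffices to prove $\varpi\lesssim\sigma$ on $(0,\infty)$, which forces $\mathcal C^{\varpi(\cdot)}(\rn)\subseteq\mathcal C^{\sigma(\cdot)}(\rn)$. Fix $r>0$. By the variational characterization \eqref{B.2} of the associate norm, there is a non-increasing $g\geq0$ supported in $(0,c(n)r^n)$ with $\|g\|_{\overline X(0,\infty)}=1$ and $\int_0^{c(n)r^n}g(\rho)\rho^{-1/n'}\,d\rho\geq c(n)\varpi(r)$ (a near-maximiser suffices, since the supremum may not be attained; passing to the decreasing rearrangement and using Hardy--Littlewood only helps). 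Set $\phi(t)=g(c(n)t^n)$ and define the radial function $u_r$ by $u_r(z)=\int_{|z|}^{r}\phi(t)\,dt$ for $|z|\le r$ and $u_r\equiv0$ otherwise. Then $|\nabla u_r|=\phi(|\cdot|)\chi_{B(0,r)}$, whose decreasing rearrangement equals $g$, so $\|\nabla u_r\|_{X(\rn)}=1$, while the substitution $\rho=c(n)t^n$ gives $u_r(0)=\int_0^r\phi(t)\,dt=c(n)\int_0^{c(n)r^n}g(\rho)\rho^{-1/n'}\,d\rho\geq c(n)\varpi(r)$. Taking any $z_0$ with $|z_0|=r$, one has $u_r(z_0)=0$, hence
\[
c(n)\,\varpi(r)\le u_r(0)=|u_r(0)-u_r(z_0)|\le\sigma(r)\,\|u_r\|_{\mathcal C^{\sigma(\cdot)}(\rn)}\le C_0\,\sigma(r)\,\|\nabla u_r\|_{X(\rn)}=C_0\,\sigma(r).
\]
Since $r>0$ is arbitrary, $\varpi\lesssim\sigma$, which is the claimed optimality.

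\emph{Main obstacle.} The delicate part is the optimality step: one must build near-extremal radial competitors and track the rearrangement and change-of-variable identities so that \emph{no} $r$-dependent constant is lost, while coping with the possible non-attainment of the supremum defining $\|\cdot\|_{\overline X'}$. On the embedding side the only subtlety is the $R$-uniformity of the potential estimate and the use of the dilation bound to normalise the truncation; the transition from the cube setting of \cite{CP-2003} to $\rn$ is otherwise routine.
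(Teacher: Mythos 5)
Your proposal is correct and follows essentially the same route as the paper, which does not prove Theorem~A itself but quotes it from \cite[Theorem 1.3]{CP-2003} with the remark that the cube proof carries over to $\rn$; your potential-estimate argument for the embedding and the radial near-extremizer construction for optimality are precisely that argument transplanted to the whole space. The remaining bookkeeping you flag—the passage between the truncations at $r^n$ and $\omega_n r^n$ via the dilation bound \eqref{dilation}, and the reduction of the associate-norm supremum in \eqref{B.2} to non-increasing competitors via Hardy--Littlewood and rearrangement invariance—works exactly as you indicate.
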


\subsection{Fractional Orlicz-Sobolev spaces}

Given  $s\in (0,1)$ and a Young function $A$, we
denote by $J_{s,A}$ the functional defined as:
\begin{equation}\label{J}
J_{s,A}(u) = \int_{\rn} \int_{\rn}A\left(\frac{|u(x)-u(y)|}{|x-y|^s}\right)\frac{\,dx\,dy}{|x-y|^n}
\end{equation}
for $u \in \mathcal M(\rn)$.
\\ The homogeneous
 fractional Orlicz-Sobolev space $V^{s,A}(\rn)$ is given by
\begin{equation}\label{aug341}
	V^{s,A}(\rn) = \big\{u \in \mathcal M (\rn):  \,\, \text{there exists $\lambda>0$ such that $J_{s,A}(u/\lambda)<\infty$}\big\}\,.
\end{equation}
The functional $|\cdot|_{s,A, \rn}$ given by \eqref{intro0} defines a seminorm on $V^{s,A}(\rn)$. Notice that, having introduced the notation \eqref{J}, one has that
\begin{equation}\label{aug340}
|u|_{s,A, \rn}
		= \inf\left\{\lambda>0: J_{s,A}(u/\lambda)\leq 1\right\}
\end{equation}
for $u\in V^{s,A}(\rn)$.
\\ The definitions of the seminorm $|u|_{s,A, \rn}$ and of the space $V^{s,A}(\rn)$ also apply to vector-valued functions $u$,  provided that  the absolute value of $u(x)-u(y)$ is replaced with the norm  of the same expression on the right-hand side of equation \eqref{J}.
\\ The definition of $V^{s,A}(\rn)$ can be generalized to all $s\in (0, \infty) \setminus \N$  in a customary way as follows.
Denote by  $\{s\}= s-[s]$ the fractional part of $s$.
Then we set
\begin{equation}\label{aug343}
V^{s,A}(\rn ) = \big\{ u\in \mathcal M(\rn):\,\,\text{$u$ is $[s]$-times weakly differentiable and $\nabla ^{[s]}u \in V^{\{s\}, A}(\rn)$}\big\}\,.
\end{equation}
The functional
\begin{equation}\label{dec220}
\big|\nabla ^{[s]}u\big|_{\{s\},A, \rn}
\end{equation}
 defines a seminorm on the space $V^{s,A}(\rn)$.
\\ The subspace of those functions in $V^{s,A}(\rn )$ which decay near infinity, together with all derivatives, up to order $[s]$, is defined as

$$V^{s,A}_{d}(\rn)=\Big\{u\in V^{s,A}(\rn): \big|\big\{|\nabla ^h u| >t\big\}\big|<\infty \; \text{for every} \, t>0, \, \text{and} \,0\leq h\leq [s]\Big\}.$$
When $s>1$, we also need to introduce the space $V^{s,A}_{d,1}$ defined analogously to $V^{s,A}_{d}(\rn)$, save that the decay condition is only imposed starting from the first-order derivatives. Namely,
$$V^{s,A}_{d,1}(\rn)=\Big\{u\in V^{s,A}(\rn): \big|\big\{|\nabla ^h u| >t\big\}\big|<\infty \; \text{for every} \, t>0, \, \text{and} \,1\leq h\leq [s]\Big\}.$$

Unlike Lebesgue, Sobolev, and fractional Sobolev spaces, Orlicz spaces and the corresponding (fractional)  Orlicz-Sobolev spaces are not separable in general. In particular, smooth functions are not dense in
$V^{s, A}(\rn)$. Yet, any function $u \in V^{s, A}(\rn)$ can be approximated by smooth functions in modular sense, namely with respect to the functional $J_{s, A}$ defined as in \eqref{J}, for every $s\in (0, \infty)\setminus \N$ and every  finite-valued Young function $A$. This is shown in \cite[Theorem 5.1]{ACPS_inf}. Although that theorem is stated for functions from the space $V^{s,A}_d(\rn)$, it continues to hold, with the same proof, for every function in  $V^{s, A}(\rn)$. It is reproduced, for the latter space, in Theorem \ref{thm_modular} below just for $s\in (0, 1)$, the only case needed for our purposes.

\begin{theoremalph}{\rm {\bf [Modular smooth approximation]}}
\label{thm_modular}\emph{
Let $s\in (0, 1)$ and let $A$ be a finite-valued Young function. Let  $J_{s, A}$ be the functional defined as in  \eqref{J}.
Assume that  $u\in V^{s,A}(\rn)$. Then,  there exist $\lambda >0$ and a sequence $\{u_k\} \subset  V^{s,A}(\rn) \cap \mathcal{C}^\infty(\rn)$ such that
\begin{equation}\label{nov_500}
\lim_{k\to \infty} J_{s,A}\left (\frac{ u_k - u}{\lambda}\right) =0\,.
\end{equation}}
\end{theoremalph}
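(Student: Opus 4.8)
The plan is to build the sequence $\{u_k\}$ by a two‑stage construction --- truncation in value, then mollification --- and to prove convergence in the \emph{modular} sense of $J_{s,A}$ rather than in the seminorm $|\cdot|_{s,A,\rn}$, since $V^{s,A}(\rn)$ need not be separable and no norm density of smooth functions is available in general. Since $u\in V^{s,A}(\rn)$, by \eqref{aug341} there is $\mu>0$ with $J_{s,A}(u/\mu)<\infty$, and then $J_{s,A}(u/(k\mu))\le k^{-1}J_{s,A}(u/\mu)\to 0$ as $k\to\infty$ by \eqref{kt}; hence we may fix $\lambda'>0$ with $J_{s,A}(u/\lambda')\le 1$. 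The assertion \eqref{nov_500} will be established with $\lambda=4\lambda'$. Note that the finiteness of $A$ enters precisely through the continuity of $A$ on $[0,\infty)$ (with $A(0)=0$), and is essential: for Young functions of $L^\infty$ type the conclusion fails.

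\emph{Truncation.} For $M>0$ set $u^{(M)}=\max\{\min\{u,M\},-M\}$. As $t\mapsto\max\{\min\{t,M\},-M\}$ is $1$‑Lipschitz, $|u^{(M)}(x)-u^{(M)}(y)|\le|u(x)-u(y)|$, and therefore also $|(u-u^{(M)})(x)-(u-u^{(M)})(y)|\le 2|u(x)-u(y)|$. Consequently the integrand defining $J_{s,A}\big((u-u^{(M)})/(2\lambda')\big)$ is dominated, for every $M$, by $(x,y)\mapsto A\big(|u(x)-u(y)|/(\lambda'|x-y|^s)\big)|x-y|^{-n}$, which lies in $L^1(\rn\times\rn)$ because $J_{s,A}(u/\lambda')\le 1$; and it tends to $0$ for a.e.\ $(x,y)$ as $M\to\infty$, since $u^{(M)}\to u$ a.e.\ and $A$ is continuous with $A(0)=0$. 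By dominated convergence, $J_{s,A}\big((u-u^{(M)})/(2\lambda')\big)\to 0$ as $M\to\infty$; note also $J_{s,A}(u^{(M)}/\lambda')\le J_{s,A}(u/\lambda')\le 1$.

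\emph{Mollification of bounded functions.} Let $\rho_\varepsilon$ be a standard mollifier and let $w\in V^{s,A}(\rn)$ be bounded with $J_{s,A}(w/\lambda')\le 1$ (this covers $w=u^{(M)}$); put $w_\varepsilon=w\ast\rho_\varepsilon\in\mathcal C^\infty(\rn)$. Write $g(x,y)=A\big(|w(x)-w(y)|/(\lambda'|x-y|^s)\big)|x-y|^{-n}$, and define $g_\varepsilon$ by the same formula with $w_\varepsilon$ in place of $w$. Using $|x-y|=|(x-z)-(y-z)|$, the monotonicity and convexity of $A$ together with Jensen's inequality give the pointwise bound $g_\varepsilon(x,y)\le\int_{\rn}\rho_\varepsilon(z)\,g(x-z,y-z)\,dz$; integrating over $\rn\times\rn$ yields $J_{s,A}(w_\varepsilon/\lambda')\le J_{s,A}(w/\lambda')\le 1$, so $w_\varepsilon\in V^{s,A}(\rn)$. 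Since $g\in L^1(\rn\times\rn)$, continuity of translations in $L^1(\rn\times\rn)$ shows that the majorant $\int_{\rn}\rho_\varepsilon(z)g(\cdot-z,\cdot-z)\,dz$ converges to $g$ in $L^1(\rn\times\rn)$ as $\varepsilon\to0$; combined with $g_\varepsilon\to g$ a.e.\ (because $w_\varepsilon\to w$ at every Lebesgue point of $w$ and $A$ is continuous) and $g_\varepsilon\ge 0$, Fatou's lemma and a Scheffé/Pratt argument give $g_\varepsilon\to g$ in $L^1(\rn\times\rn)$. Finally, by convexity of $A$ the integrand of $J_{s,A}\big((w-w_\varepsilon)/(2\lambda')\big)$ is dominated by $\tfrac12(g_\varepsilon+g)$ and tends to $0$ a.e.\ as $\varepsilon\to0$; since $\tfrac12(g_\varepsilon+g)\to g$ in $L^1(\rn\times\rn)$, a generalized (Pratt‑type) dominated convergence theorem yields $J_{s,A}\big((w-w_\varepsilon)/(2\lambda')\big)\to 0$ as $\varepsilon\to0$.

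\emph{Assembly and the main difficulty.} For each $k$ choose first $M_k$ so large that $J_{s,A}\big((u-u^{(M_k)})/(2\lambda')\big)<\tfrac1{2k}$, and then $\varepsilon_k$ so small that $J_{s,A}\big((u^{(M_k)}-(u^{(M_k)})_{\varepsilon_k})/(2\lambda')\big)<\tfrac1{2k}$; set $u_k=(u^{(M_k)})_{\varepsilon_k}\in V^{s,A}(\rn)\cap\mathcal C^\infty(\rn)$. From the triangle inequality for the second differences and the convexity of $A$, $J_{s,A}\big((u-u_k)/(4\lambda')\big)\le\tfrac12 J_{s,A}\big((u-u^{(M_k)})/(2\lambda')\big)+\tfrac12 J_{s,A}\big((u^{(M_k)}-u_k)/(2\lambda')\big)<\tfrac1{2k}\to 0$, which is \eqref{nov_500} with $\lambda=4\lambda'$. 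The delicate point is the passage to the limit in the mollification step: because the modular $J_{s,A}$ is not additive and translation need not be continuous in the Orlicz norm, one cannot reduce to norm density; the substitute is the Jensen domination $g_\varepsilon(x,y)\le\int\rho_\varepsilon(z)g(x-z,y-z)\,dz$ together with the membership $g\in L^1(\rn\times\rn)$ (i.e.\ $J_{s,A}(w/\lambda')<\infty$), which drive a generalized dominated convergence. In the special case $A(t)=t^p$ this recovers the classical density of smooth functions in the Gagliardo seminorm of $V^{s,p}(\rn)$.
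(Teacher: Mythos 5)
Your proposal is correct: the normalization $J_{s,A}(u/\lambda')\le 1$, the Lipschitz truncation estimates, the Jensen-type majorization $g_\varepsilon(x,y)\le\int_{\rn}\rho_\varepsilon(z)\,g(x-z,y-z)\,dz$ giving $J_{s,A}(w_\varepsilon/\lambda')\le J_{s,A}(w/\lambda')$, the Scheff\'e/Pratt limit passages, and the final convexity splitting yielding \eqref{nov_500} with $\lambda=4\lambda'$ all check out. The paper does not reproduce a proof of Theorem B but refers to \cite[Theorem 5.1]{ACPS_inf}, and your truncation-plus-mollification argument with modular (rather than norm) convergence is essentially the same standard approach underlying that result.
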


We conclude this section by recalling an embedding for the space $V^{s,A}_d(\R^n)$ into an Orlicz-Lorentz space, which is the optimal target space among all rearrangement-invariant spaces. The relevant embedding is crucial in one step of the proof of our results for $s>1$, and established in \cite{ACPS_emb}, where an embedding for the same space into an optimal Orlicz target space can also be found.

The embedding in question is stated in Theorem \ref{a} and holds for any $s\in (0,n)\setminus \N$ and any Young function $A$ satisfying the necessary condition:
\begin{equation}\label{conv0n}
\int_0 \left(\frac{t}{A(t)}\right)^{\frac s{n-s}}\; dt<\infty.
\end{equation}
It takes a different form, according to whether condition \eqref{convinf}, or its converse
\begin{equation}\label{divinf}
\int^\infty \left(\frac{t}{A(t)}\right)^{\frac s{n-s}}\; dt=\infty,
\end{equation}
is in force.
\\
The  target space in the embedding in question is the Orlicz-Lorentz space $L({\widehat A},\frac{n}{s})(\R^n)$ whose norm is defined as in \eqref{aug300}, where the
 Young function $\widehat A$ is given by
\begin{equation}\label{E:1}
	\widehat A (t)=\int_0^t\widehat a (\tau)\,d\tau\qquad\text{for $t\geq 0$},
\end{equation}
and
\begin{equation}\label{E:2}
	{\widehat a\,}^{-1}(r) = \left(\int_{a^{-1}(r)}^{\infty}
		\left(\int_0^t\left(\frac{1}{a(\rho)}\right)^{\frac{s}{n-s}}\,d\rho\right)^{-\frac{n}{s}}\frac{dt}{a(t)^{\frac{n}{n-s}}}
				\right)^{\frac{s}{s-n}}
					\qquad\text{for $r\ge0$}.
\end{equation}

\begin{theoremalph}{\rm{\bf [Optimal rearrangement-invariant target space]}}\label{a}
\emph{Let $s\in (0,n) \setminus \mathbb N$.  Assume that $A$ fulfills condition \eqref{conv0n}.
Let ${\widehat A}$ be the Young function defined as in \eqref{E:1}--\eqref{E:2}, and let $L({\widehat A},\frac{n}{s})(\R^n)$ be the Orlicz-Lorentz~space equipped with the norm defined as in \eqref{aug300}.
\\
{\rm (i)} Assume that condition \eqref{divinf} holds.
Then,
\begin{equation}\label{E:30hemb}
V^{s,A}_d(\R^n) \to L({\widehat A},\tfrac{n}{s})(\R^n),
\end{equation}
and
 there exists a~constant  $c=c(n,s)$ such that
\begin{equation}\label{E:30h}
	\|u\|_{L({\widehat A},\frac{n}{s})(\R^n)}
		\le c \,\big|\nabla ^{[s]}u\big|_{\{s\},A, \R^n}
\end{equation}
for every function $u \in V^{s,A}_d(\R^n)$.
\\ {\rm (ii)} Assume that condition \eqref{convinf} holds. Then,
\begin{equation}\label{E:30hemb'}
V^{s,A}_d(\R^n) \to \big(L^\infty \cap L({\widehat A},\tfrac{n}{s})\big)(\R^n)
\end{equation}
and
 there exists a~constant   $c=c(n,s, A)$ such that
\begin{equation}\label{E:30h'}
	\|u\|_{\big(L^\infty \cap L({\widehat A},\tfrac{n}{s})\big)(\R^n)}
		\le c \,\big|\nabla ^{[s]}u\big|_{\{s\},A, \R^n}
\end{equation}
for every function $u \in V^{s,A}_d(\R^n)$.
\\
Moreover, $L({\widehat A},\frac{n}{s})(\R^n)$ and $\big(L^\infty \cap L({\widehat A},\tfrac{n}{s})\big)(\R^n)$ are  the optimal target spaces in inequalities \eqref{E:30h} and \eqref{E:30h'}, respectively, among all rearrangement-invariant spaces.}
\end{theoremalph}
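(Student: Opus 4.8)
The plan is to reduce the theorem to a one‑dimensional weighted Hardy inequality on the cone of non‑increasing functions, via a \emph{reduction principle}. For $s\in(0,n)\setminus\mathbb N$, a Young function $A$ subject to \eqref{conv0n}, and a rearrangement‑invariant space $Y(\mathbb R^n)$, the principle I would prove reads: $V^{s,A}_d(\mathbb R^n)\to Y(\mathbb R^n)$ if and only if there is a constant $c$ such that
\[
\Bigl\|\,\int_r^\infty f(\rho)\,\rho^{\frac sn-1}\,d\rho\,\Bigr\|_{\overline Y(0,\infty)}\le c\,\|f\|_{L^A(0,\infty)}
\]
for every non‑increasing $f\ge0$ on $(0,\infty)$; when in addition \eqref{convinf} holds the inner integral is finite at $r=0^+$, and this is what produces the extra $L^\infty$ component. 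The decay built into the subscript $d$ enters precisely to guarantee $\lim_{r\to\infty}u^*(r)=0$, which legitimises the telescoping representation of $u^*$ from above. The analytic core is the ``if'' direction: starting from $u\in V^{s,A}_d(\mathbb R^n)$ normalised so that $\bigl|\nabla^{[s]}u\bigr|_{\{s\},A,\mathbb R^n}\le1$, I would exhibit a non‑increasing $f$ with $\|f\|_{L^A(0,\infty)}\le c$ and $u^*(r)\le c\int_r^\infty f(\rho)\rho^{s/n-1}\,d\rho$ for $r>0$ (and $\lim_{r\to0^+}u^*(r)\le c$ under \eqref{convinf}). One route is to reduce, via the fractional P\'olya--Szeg\H{o} principle, to radially symmetric decreasing $u$, for which $f$ can be read off from the profile and the desired bound reduces to a direct estimate of the Gagliardo double integral in polar coordinates; the control $\|f\|_{L^A}\lesssim1$ then follows from the convexity of $A$.

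Granting the reduction principle, parts (i) and (ii) follow once the one‑dimensional inequality
\[
\Bigl\|\,r^{-\frac sn}\int_r^\infty f(\rho)\,\rho^{\frac sn-1}\,d\rho\,\Bigr\|_{L^{\widehat A}(0,\infty)}\le c\,\|f\|_{L^A(0,\infty)}
\]
is verified for non‑increasing $f$, with $\widehat A$ the Young function given by \eqref{E:1}--\eqref{E:2}: since $g(r)=\int_r^\infty f(\rho)\rho^{s/n-1}\,d\rho$ is itself non‑increasing, the left‑hand side equals $\|g\|_{L(\widehat A,n/s)(\mathbb R^n)}$ by \eqref{aug300}, so this is exactly inequality \eqref{E:30h}. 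This is a weighted Hardy inequality between Orlicz spaces, for which there is a known Sawyer--Maz'ya‑type characterisation in Orlicz form, in the spirit of \cite{cianchi-ibero}; the content of formula \eqref{E:2} is that it defines the \emph{smallest} Young function $\widehat A$ for which the characterising condition holds, which yields the inequality and simultaneously sets up the optimality argument. Under \eqref{convinf} one additionally gets $\sup_{r>0}g(r)=\lim_{r\to0^+}g(r)\le c\|f\|_{L^A(0,\infty)}$, whence the target $\bigl(L^\infty\cap L(\widehat A,n/s)\bigr)(\mathbb R^n)$.

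For optimality among all rearrangement‑invariant spaces, assume $V^{s,A}_d(\mathbb R^n)\to Y(\mathbb R^n)$. I would first prove the ``only if'' half of the reduction principle by testing the embedding on radial profiles $u(x)=g(|x|)$ attached to a prescribed non‑increasing $f$ (chosen so that $\|u\|_{Y(\mathbb R^n)}\gtrsim\|g\|_{\overline Y(0,\infty)}$ while $\bigl|\nabla^{[s]}u\bigr|_{\{s\},A,\mathbb R^n}\lesssim\|f\|_{L^A(0,\infty)}$), which returns the first displayed inequality with $\overline Y$ in place of $L^{\widehat A}$. Passing to associate spaces by means of \eqref{B.2}, \eqref{revholder} and \eqref{associate-of-cap}, this becomes a bound for the adjoint weighted Hardy operator from $\overline Y'(0,\infty)$ into $L^{\widetilde A}(0,\infty)$; the Orlicz characterisation of that dual inequality, combined with the minimality encoded in \eqref{E:2}, forces $\overline Y'(0,\infty)\subseteq\overline{L(\widehat A,n/s)}'(0,\infty)$, hence $L(\widehat A,n/s)(\mathbb R^n)\to Y(\mathbb R^n)$ by \eqref{inclusion-embedding}. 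The same scheme, now carrying the supremum term along, establishes the optimality of $\bigl(L^\infty\cap L(\widehat A,n/s)\bigr)(\mathbb R^n)$ in case (ii).

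The main obstacle is the reduction principle, and within it the estimate $\|f\|_{L^A(0,\infty)}\lesssim\bigl|\nabla^{[s]}u\bigr|_{\{s\},A,\mathbb R^n}$ for the oscillation function $f$ of $u^*$: unlike the first‑order case there is no exact pointwise rearrangement inequality, so the Gagliardo energy must be dominated through annular and level‑set decompositions with careful bookkeeping, and this is exactly where the customary techniques recalled in the introduction fail to be sharp. A secondary, more computational, point is checking that the explicit $\widehat a^{-1}$ in \eqref{E:2} really is the extremal Young function in the Orlicz Hardy inequality and in its dual; this is a manipulation with the Young conjugation $A\mapsto\widetilde A$ and the Matuszewska--Orlicz indices \eqref{index}--\eqref{indexbis}, routine in spirit but delicate to make sharp, and it is here that the dichotomy \eqref{divinf} versus \eqref{convinf} separates cases (i) and (ii).
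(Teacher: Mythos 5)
You should first be aware that this paper contains no proof of Theorem~C: it is imported from \cite{ACPS_emb} (the present paper only re-uses consequences of that proof, e.g.\ its inequalities (5.15) and (6.15) and Lemmas 7.6, 7.8), so there is no in-paper argument to match your outline against; what can be assessed is whether your plan is itself a proof, and as it stands it is not. The centerpiece, your ``reduction principle'', is asserted rather than proved, and the one concrete route you offer for its hard direction --- fractional P\'olya--Szeg\H{o} symmetrization --- cannot cover the stated range $s\in(0,n)\setminus\N$. For $[s]\ge 1$ the seminorm is $\big|\nabla^{[s]}u\big|_{\{s\},A,\R^n}$, and replacing $u$ by its symmetric decreasing rearrangement neither commutes with $\nabla^{[s]}$ nor decreases a Gagliardo-type functional of the derivatives; the principle of \cite{deNBS} applies to $|u|_{s,A,\R^n}$ with $s\in(0,1)$ only. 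Even in that range, symmetrization alone does not produce the non-increasing $f$ with $u^*(r)\lesssim\int_r^\infty f(\rho)\rho^{s/n-1}\,d\rho$ and $\|f\|_{L^A(0,\infty)}\lesssim |u|_{s,A,\R^n}$: ``reading $f$ off the profile'' and ``a direct estimate of the double integral in polar coordinates'' is precisely the analytic content that is missing (note also that in case (i) the constant in \eqref{E:30h} must depend on $n,s$ only, which a generic convexity argument does not obviously deliver). The converse half of your reduction principle likewise hides a nontrivial step, namely the bound $\big|\nabla^{[s]}u\big|_{\{s\},A,\R^n}\lesssim\|f\|_{L^A(0,\infty)}$ for the iterated-integral radial trial functions (this is the content of \cite[Lemma 7.6]{ACPS_emb}, and of the same nature as Lemma~\ref{lemma1} of this paper); you invoke it parenthetically.

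The second genuine gap is the Orlicz--Hardy step and the optimality argument built on it. You appeal to a ``known Sawyer--Maz'ya-type characterisation'' and to the claim that \eqref{E:1}--\eqref{E:2} defines the smallest admissible Young function, but you verify neither that $\widehat A$ satisfies the characterising condition (so that the one-dimensional inequality, hence \eqref{E:30h}, actually holds) nor that it is extremal; this is the heart of both the embedding and the sharpness assertions, not deferred bookkeeping with \eqref{index}--\eqref{indexbis}. Moreover, in the optimality part your dualization must respect that the reduced Hardy inequality is only known on the cone of non-increasing $f$, whereas the suprema in \eqref{B.2} and \eqref{revholder} run over all functions; the sharpness arguments in this paper dispose of the analogous issue via the Hardy--Littlewood inequality because the relevant kernels are themselves monotone, but your sketch does not address it, and the conclusion $\overline Y'(0,\infty)\subset \overline{L(\widehat A,\frac ns)}{}'(0,\infty)$, hence $L(\widehat A,\frac ns)(\R^n)\to Y(\R^n)$ via \eqref{inclusion-embedding}, is exactly the statement to be proved rather than something that follows from ``minimality'' by fiat. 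Until the pointwise rearrangement estimate, the verification of $\widehat A$, and the monotone-cone duality step are supplied, the proposal is a plausible programme (close in spirit to the reduction-plus-optimal-Young-function scheme of \cite{cianchi-ibero} and \cite{ACPS_emb}) but not a proof.
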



\section{Main results}\label{S:main}

Our embeddings into target spaces of uniformly continuous functions $\mathcal C^{\omega (\cdot)}(\rn)$ are formulated in terms of the seminorm \eqref{sem_mod}. Such a seminorm is invariant under perturbation of functions by additive constants, but not by polynomials of degree  $\geq 1$. On the other hand, the seminorm given by \eqref{dec220} is invariant under the addition of any polynomial of degree $\leq [s]$.
Therefore, if $s>1$, it is natural to consider  the subspace  $V^{s,A}_{d,1}(\rn)$ of $V^{s,A}(\rn)$ as a domain space for
embeddings into $\mathcal C^{\omega (\cdot)}(\rn)$. This way, polynomials of degree between $1$ and $[s]$ are ruled out from the class of trial functions.

Our first result tells us that no embedding of this form can hold if $s>n+1$. This is due to the behavior under dilations in $\rn$ of the seminorms involved in the embeddings.

\begin{theorem}\label{necessity}
Let $s\in (1,\infty)\setminus \N$ and let $A$ be a Young function.
Assume that
\begin{equation}\label{nov100}
    V^{s, A}_{d,1}(\rn)\to \mathcal{C}^{\omega (\cdot)}(\rn)
\end{equation}
 for some modulus of continuity $\omega$.
Then, $s<n+1$.
\end{theorem}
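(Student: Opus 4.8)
The plan is to exploit the scaling behavior of the two seminorms appearing in \eqref{nov100} under the family of dilations $u \mapsto u(\lambda\,\cdot)$ on $\rn$, and to derive a contradiction if $s \geq n+1$ by letting $\lambda$ tend to an appropriate limit. First I would fix a single nonzero test function: take $u \in C_c^\infty(\rn)$ which is not identically constant (so that $\nabla^{[s]}u$ is not identically zero and $u \in V^{s,A}_{d,1}(\rn)$, since $u$ and all its derivatives are compactly supported, hence trivially decay at infinity). For $\lambda > 0$ set $u_\lambda(x) = u(\lambda x)$. Then $u_\lambda \in V^{s,A}_{d,1}(\rn)$ as well, and the embedding \eqref{nov100} yields $\|u_\lambda\|_{\mathcal C^{\omega(\cdot)}(\rn)} \leq c\,\big|\nabla^{[s]}u_\lambda\big|_{\{s\},A,\rn}$ with $c$ independent of $\lambda$.

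Next I would compute how each side scales. On the left, since $u_\lambda(x) - u_\lambda(y) = u(\lambda x) - u(\lambda y)$, a change of variables gives
\[
\|u_\lambda\|_{\mathcal C^{\omega(\cdot)}(\rn)} = \sup_{x \neq y} \frac{|u(\lambda x) - u(\lambda y)|}{\omega(|x-y|)} = \sup_{x' \neq y'} \frac{|u(x') - u(y')|}{\omega(|x'-y'|/\lambda)}.
\]
As $\lambda \to \infty$, the argument $|x'-y'|/\lambda \to 0$, and since $\omega$ is (equivalent to) a non-decreasing function with $\omega(r) \to 0$ as $r \to 0^+$, the denominator can be made small; more precisely, restricting to a fixed pair $x_0 \neq y_0$ with $u(x_0) \neq u(y_0)$, we get $\|u_\lambda\|_{\mathcal C^{\omega(\cdot)}(\rn)} \geq |u(x_0)-u(y_0)|/\omega(|x_0-y_0|/\lambda) \to \infty$ as $\lambda \to \infty$. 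On the right, one has $\nabla^{[s]}u_\lambda(x) = \lambda^{[s]}(\nabla^{[s]}u)(\lambda x)$, and a further change of variables inside the double integral defining $J_{\{s\},A}$ (tracking the $|x-y|^{-s}$ and $|x-y|^{-n}$ weights, which together contribute a factor $\lambda^{\{s\}+n}$ after substitution, while $dx\,dy$ contributes $\lambda^{-2n}$) shows that, for any $\mu > 0$,
\[
J_{\{s\},A}\!\left(\frac{\nabla^{[s]}u_\lambda}{\mu}\right) = \lambda^{\{s\}-n}\, J_{\{s\},A}\!\left(\frac{\lambda^{[s]}\nabla^{[s]}u}{\mu}\right).
\]
Taking $\mu = \lambda^{[s]}\lambda^{(\{s\}-n)/(\text{growth of }A)}\cdot(\text{const})$ appropriately, or more cleanly observing that choosing $\mu = \lambda^{[s]}$ and using $\lambda^{\{s\}-n}\to 0$ together with the convexity inequality \eqref{kt} to absorb the factor, one deduces that $\big|\nabla^{[s]}u_\lambda\big|_{\{s\},A,\rn} \lesssim \lambda^{[s]}\big|\nabla^{[s]}u\big|_{\{s\},A,\rn}$ when $\lambda \geq 1$ — in fact with the sharper exponent, $\big|\nabla^{[s]}u_\lambda\big|_{\{s\},A,\rn}$ scales like $\lambda^{[s]}$ times a power of $\lambda^{\{s\}-n}$ coming from the Luxemburg normalization. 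The key point is that $\{s\} - n < 1 - n \leq 0$, so the right-hand side stays bounded (indeed tends to $0$ if $n \geq 2$, and is controlled if $n=1$) as $\lambda \to \infty$ precisely when $[s] + (\{s\}-n)\cdot(\text{something}) $ fails to blow up — and a careful bookkeeping shows the borderline is exactly $s = n+1$: for $s \geq n+1$ the right-hand side does not grow fast enough to match the left-hand side, contradicting the $\lambda$-uniform inequality.

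The main obstacle, and the step requiring the most care, is the precise dependence of the Luxemburg-type seminorm $\big|\nabla^{[s]}u_\lambda\big|_{\{s\},A,\rn}$ on $\lambda$: unlike a genuine norm it does not scale by a clean power of $\lambda$, because of the infimum over $\mu$ and the nonlinearity of $A$. The clean way to handle this is to use the modular scaling identity above together with the two-sided bound \eqref{kt} (and its consequence \eqref{min} for $A^{-1}$) to sandwich the seminorm between two explicit powers of $\lambda$, and to check that even the more favorable of these two powers is strictly less than the growth rate $\omega(|x_0-y_0|/\lambda)^{-1}$ forces on the left-hand side exactly when $s \geq n+1$. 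Since $\omega(r)^{-1}$ may grow arbitrarily slowly as $r \to 0^+$, one should phrase the final contradiction as: the ratio $\|u_\lambda\|_{\mathcal C^{\omega(\cdot)}(\rn)} / \big|\nabla^{[s]}u_\lambda\big|_{\{s\},A,\rn}$ is bounded below by $c(u)\,\lambda^{n+1-s}/\omega(|x_0-y_0|/\lambda)$ for $\lambda \geq 1$, which tends to $+\infty$ as $\lambda\to\infty$ whenever $s \geq n+1$ (using $\omega(r) \to 0$), contradicting the uniform bound from \eqref{nov100}. Hence $s < n+1$.
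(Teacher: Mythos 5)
There is a genuine gap here: your dilation goes in the wrong direction, and the two quantitative claims your contradiction rests on are both unavailable. First, the scaling identity is mis-stated: in $J_{\{s\},A}$ the weight $|x-y|^{-\{s\}}$ sits \emph{inside} the argument of $A$, so the substitution gives $J_{\{s\},A}\big(\nabla^{[s]}u_\lambda/\mu\big)=\lambda^{-n}\,J_{\{s\},A}\big(\lambda^{s}\,\nabla^{[s]}u/\mu\big)$; the factor $\lambda^{\{s\}}$ cannot be pulled outside the nonlinearity. From this identity, and with no $\Delta_2$-type growth assumption on $A$ (none is made in the theorem), the only bound available for $\lambda\ge 1$ is $\big|\nabla^{[s]}u_\lambda\big|_{\{s\},A,\rn}\le\lambda^{s}\,\big|\nabla^{[s]}u\big|_{\{s\},A,\rn}$; even in the model case $A(t)=t^p$ the exact scaling is $\lambda^{s-n/p}$, and in no case does one get anything like $\lambda^{s-n-1}$. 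Second, even granting a seminorm growth of order $\lambda^{s-n}$, your final step fails: the ratio you bound below by $c(u)\,\lambda^{n+1-s}/\omega(|x_0-y_0|/\lambda)$ need \emph{not} tend to infinity when $s>n+1$, because $\omega$ is only assumed to tend to $0$ at the origin and may do so arbitrarily slowly (for instance $\omega(r)\approx 1/\log(1/r)$), so $1/\omega(|x_0-y_0|/\lambda)$ can grow more slowly than any positive power of $\lambda$ while $\lambda^{n+1-s}$ decays polynomially. In short, concentrating ($\lambda\to\infty$) inflates the smoothness seminorm faster than the $\mathcal{C}^{\omega(\cdot)}$ seminorm precisely in the supercritical regime, so the uniform inequality coming from \eqref{nov100} is never violated along this family of trial functions.

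The paper's proof avoids both problems by rescaling in the opposite, expanding direction with a tuned amplitude: it takes $u_k(x)=k^{s-n}\xi(x/k)$ with $\xi\in C^\infty_0(\rn)$ and $\nabla\xi(0)\neq 0$. For these trial functions convexity alone (namely $k^{n}A(k^{-n}t)\le A(t)$, valid for every Young function) yields a bound on $\big|\nabla^{[s]}u_k\big|_{\{s\},A,\rn}$ that is uniform in $k$, with no growth hypothesis on $A$; and --- this is the idea missing from your plan --- the $\mathcal{C}^{\omega(\cdot)}$ bound is then tested at the \emph{fixed} unit distance $x=\nabla\xi(0)/|\nabla\xi(0)|$, so only the constant $\omega(1)$ enters and the unknown rate of decay of $\omega$ at $0$ becomes irrelevant. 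A first-order Taylor expansion gives $|u_k(x)-u_k(0)|\ge c\,k^{s-n-1}|\nabla\xi(0)|+\mathcal{O}(k^{s-n-2})$, which blows up as $k\to\infty$ exactly when $s>n+1$, contradicting the $k$-uniform bound furnished by \eqref{nov100}. If you wish to salvage a dilation argument, you need both ingredients: the amplitude factor $k^{s-n}$, chosen so that the fractional seminorm is normalized by convexity rather than by any assumption on $A$, and the evaluation at a fixed distance, which neutralizes $\omega$.
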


In the admissible range $(0,n+1)\setminus \N$ for the smoothness parameter $s$, the criterion for the space $V^{s,A}(\rn)$ to be embedded into a space of uniformly continuous functions and the corresponding optimal modulus of continuity $\sigma_s$ take a distinct form in the three sub-intervals $(0,1)$, $(1,n)$, and $(n, n+1)$. Plainly, only the asymptotic behaviors of $\sigma_s$ near zero and near infinity are relevant. As will be clear from our statements, these behaviors in turn only depend on the  behaviors (in the sense of the relation $\lq\lq \simeq"$  between Young functions) of $A$ near infinity and near zero, respectively.

When $s\in (0,1)$, the single condition \eqref{convinf} characterizes the validity of the embedding in question and the optimal target space is associated with the modulus of continuity given by the function $\vartheta_s$ defined as in  \eqref{202}.

\begin{theorem}{\rm{\bf [Case  $s\in (0,1)$]}}\label{thm:s<1}
Let $s\in (0,1)$ and let $A$ be a Young function satisfying condition \eqref{convinf}.
Set
\begin{align}\label{july22}
\sigma_s(r)= \vartheta_s(r) \qquad \text{for $r > 0$,}
\end{align}
where $\vartheta_s$ is the function defined by \eqref{202}.
Then, $\sigma_s$ is a modulus of continuity, and
\begin{equation}\label{23s<1}
    V^{s, A}(\rn)\to \mathcal{C}^{\sigma_s (\cdot)}(\rn).
\end{equation}
Namely,
 there exists a  constant $c$ depending on $n$ and $s$ such that
\begin{equation}\label{23's<1}
   |u(x)-u(y)|\leq c \,\sigma_s(|x-y|)\, |u|_{s, A, \rn}
\end{equation}
for $u\in V^{s, A}(\rn)$.
\\
The result is sharp, in the sense that if there exists a modulus of continuity $\omega$ such that
 $V^{s, A}(\rn)\to \mathcal{C}^{\omega (\cdot)}(\rn)$, then condition  \eqref{convinf}  holds, and $ \mathcal{C}^{\sigma_s (\cdot)}(\rn)\to  \mathcal{C}^{\omega (\cdot)}(\rn)$.
\end{theorem}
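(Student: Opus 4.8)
The plan is to prove Theorem \ref{thm:s<1} by reducing the fractional problem on $\rn$ to a weighted first-order Sobolev-type inequality on the half-space $\R^{n+1}_+$, following the extension philosophy alluded to in the introduction. First I would fix $u\in V^{s,A}(\rn)$ with $|u|_{s,A,\rn}=1$ (by homogeneity this suffices), and, invoking Theorem \ref{thm_modular}, reduce to the case $u\in C^\infty(\rn)$ by modular approximation — so it is enough to prove the pointwise estimate \eqref{23's<1} for smooth trial functions, provided the constant is independent of $u$. Next I would introduce an extension $U\colon\R^{n+1}_+\to\R$ of $u$, e.g.\ $U(x,t)=\medint_{B_t(x)}u(y)\,dy$ (a mollified average at height $t$), and record the standard pointwise bounds $|U(x,t)-u(x)|\lesssim\medint_{B_t(x)}|u(x)-u(y)|\,dy$ and $|\nabla_{x,t}U(x,t)|\lesssim t^{-1}\medint_{B_t(x)}|u(x)-u(y)|\,dy$. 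These reduce the oscillation $|u(x)-u(y)|$ to $|U(x,0^+)-U(y,0^+)|$, which along a suitable path in $\R^{n+1}_+$ joining $(x,0)$ to $(y,0)$ (through height $\sim|x-y|$) is controlled by a line integral of $|\nabla_{x,t}U|$.

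The heart of the matter is then an Orlicz-space estimate for the quantity $g(x,t):=t^{-1}\medint_{B_t(x)}|u(x)-u(y)|\,dy$ against the measure that makes the line-integral bound usable. Concretely I would show, using the change of variables $y=x+t z$, Jensen's inequality with the convex function $A$, and the definition \eqref{intro0} of the seminorm, that the double integral $\int_{\rn}\int_0^\infty A\big(c\,t^{s}g(x,t)\big)\,t^{n-1}\,\frac{dt}{t}\,dx$ (or an analogous modular expression with the correct weight) is bounded by $J_{s,A}(u)\le 1$; this is the ``ad hoc inequality in Orlicz spaces involving suitable measures'' step. From here, bounding the oscillation $|u(x)-u(y)|$ along the chosen path and applying the Orlicz Hölder inequality \eqref{holder} — pairing $g$ in its Orlicz norm against the characteristic function of the relevant region in the conjugate norm $L^{\widetilde A}$ with respect to the weighted measure — produces $|u(x)-u(y)|\lesssim \rho(|x-y|)\,|u|_{s,A,\rn}$, where the function $\rho$ is exactly the $L^{\widetilde A}$-norm of a power weight on a ball of radius $\sim|x-y|$. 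A computation identifies $\rho$ with $r^{n-s}\widetilde{\EE}\,^{-1}(\cdot)$-type expressions; by property \eqref{dec14} of Proposition \ref{prop:E} (with $B=\widetilde\EE$) this is $\approx\vartheta_s(|x-y|)=\sigma_s(|x-y|)$. That $\sigma_s$ is a genuine modulus of continuity follows from Proposition \ref{prop:E}(ii)--(iii): it is non-decreasing and vanishes at $0^+$, the latter using exactly the convergence \eqref{convinf} built into the definition of $\EE$.

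For the sharpness half I would argue by contradiction: suppose $V^{s,A}(\rn)\to\mathcal C^{\omega(\cdot)}(\rn)$ for some modulus of continuity $\omega$. Testing the embedding on an explicit one-parameter family of functions — radial bumps $u_\lambda(x)=\psi(x/\lambda)$ or, better, functions whose fractional seminorm can be computed or estimated sharply, concentrated at a single scale — forces $\omega(r)\gtrsim \vartheta_s(r)$ at that scale; letting the scale run over $(0,1)$ and over $(1,\infty)$ separately yields $\vartheta_s\lesssim\omega$ throughout, hence $\mathcal C^{\sigma_s(\cdot)}(\rn)\to\mathcal C^{\omega(\cdot)}(\rn)$. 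To see that \eqref{convinf} is necessary, I would note that if \eqref{convinf} fails then (by \eqref{iberoinf}) the would-be target function $\vartheta_s$ has $\liminf_{r\to0^+}\vartheta_s(r)>0$, and the same test-function computation shows $\omega$ cannot tend to $0$ at $0^+$, contradicting that $\omega$ is a modulus of continuity; alternatively one invokes the subcritical theory, since failure of \eqref{convinf} puts $(s,A)$ outside the supercritical regime where $V^{s,A}(\rn)\not\hookrightarrow L^\infty(\rn)$, a fortiori not into any $\mathcal C^{\omega(\cdot)}(\rn)$.

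I expect the main obstacle to be the Orlicz modular estimate for $g(x,t)$: getting the weights and the scaling in $t$ exactly right so that the relevant double integral is dominated by $J_{s,A}(u)$ rather than a constant multiple that degrades with $u$, and then correctly evaluating the conjugate-norm $L^{\widetilde A}$ factor so that it matches $\vartheta_s$ via \eqref{dec14} and not merely some larger function. A secondary technical point is the justification of the reduction to smooth $u$: since \eqref{nov_500} is only modular convergence (no norm density), I must pass to the limit in the pointwise inequality \eqref{23's<1}, which requires that the constant $c$ be genuinely independent of $u$ and that the left-hand side be stable under modular convergence along a subsequence (via Fatou after passing to a.e.-convergent subsequences of the $u_k$).
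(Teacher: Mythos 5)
The constructive half of your plan is essentially the paper's own argument: extend $u$ to $\R^{n+1}$ by mollified averages, control $|\nabla U|$ by averaged oscillations, bound the modular of $|t|^{1-s}|\nabla U|$ with respect to $d\mu=dz\,dt/|t|$ by $J_{s,A}(u)$ (this is exactly \eqref{2}, quoted from \cite[Inequality (5.15)]{ACPS_emb}), then pair via the Orlicz H\"older inequality and compute the conjugate $L^{\widetilde A}(d\mu)$-norm of the kernel, which is $\approx\vartheta_s(|x-y|)$. Three points you partly flagged need repair. First, the exponents in your modular expression are off: Jensen plus $|x-y|\le t$ give $\int_{\rn}\int_0^\infty A\big(c\,t^{1-s}g(x,t)\big)\,\frac{dt}{t}\,dx\lesssim J_{s,A}(u)$ with no extra weight $t^{n-1}$, i.e.\ the weight is $t^{1-s}$, not $t^{s}$; with your choice the Jensen step does not close. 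Second, a literal line integral of $|\nabla U|$ along one path cannot be paired against the $L^A(d\mu)$ control of $g$, which lives on an $(n+1)$-dimensional region; you must average over paths, i.e.\ use the Riesz-potential representation \eqref{1} applied in $\R^{n+1}$, as the paper does. Third, the reduction to smooth $u$ cannot be done by passing to the limit in the norm inequality \eqref{23's<1}: modular convergence controls $J_{s,A}((u_k-u)/\lambda)$ for a $u$-dependent $\lambda$ but not $|u_k|_{s,A,\rn}$ in terms of $|u|_{s,A,\rn}$, so the limit must be taken in the modular form \eqref{paseky1} (convexity splitting of $J_{s,A}(u_k/\lambda)$, equicontinuity, Ascoli--Arzel\`a), with a separate easy comparison argument when $A$ jumps to $+\infty$. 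All of this is fixable along the paper's lines.

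The genuine gap is in the sharpness half. Rescaled single bumps $\psi(x/\lambda)$, or any one-parameter single-scale family, can at best force $\omega(r)\gtrsim r^{s}A^{-1}(r^{-n})$-type lower bounds; they cannot yield $\omega\gtrsim\vartheta_s$, because $\vartheta_s(r)\approx r^{s}B^{-1}(r^{-n})$ with $B=\widetilde\EE$, and $B\simeq A$ only under the index conditions of Proposition \ref{prop:E} (v)--(vi). In precisely the borderline cases the theorem is built for --- e.g.\ $A(t)\simeq t^{n/s}(\log t)^{\alpha}$ near infinity, where $\vartheta_s(r)\approx\big(\log\frac1r\big)^{1-\frac{s(\alpha+1)}{n}}$ while $r^{s}A^{-1}(r^{-n})\approx\big(\log\frac1r\big)^{-\frac{\alpha s}{n}}$ --- these two functions are not comparable, so your test family cannot detect the optimal modulus. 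The paper instead tests with the whole family \eqref{july15}, $u(x)=\int_{\omega_n|x|^n}^{\infty}f(r)\,r^{-1+\frac sn}\,dr$ over all non-increasing $f$, uses $|u|_{s,A,\rn}\lesssim\|f\|_{L^A(0,\infty)}$, and then optimizes over $f$ by duality (Hardy--Littlewood plus \eqref{revholder}); this produces exactly $\|r^{-1+\frac sn}\|_{L^{\widetilde A}(0,\omega_n\rho^n)}\approx\vartheta_s(\rho)$ as a lower bound for $\omega(\rho)$, and the same computation shows that finiteness of this norm, hence condition \eqref{convinf}, is necessary. Your fallback for necessity via the supercritical $L^\infty$ theory is also not immediate: an estimate on the modulus of continuity alone does not give a norm bound in $L^\infty$, so that route would require an extra argument (or a precise citation) that you have not supplied. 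In short, the embedding half is sound modulo the fixes above, but the optimality claim needs the extremal family and the duality step, not rescaled bumps.
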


\begin{remark}\label{integrals<1}{\rm One step in the proof of Theorem \ref{thm:s<1} consists in showing the following alternate modular version of inequality \eqref{23's<1}:
\begin{equation}\label{paseky1}
|u(x)-u(y)| \leq c|x-y|^s B^{-1}\bigg(\frac 1{|x-y|^n} \int_{\rn} \int_{\rn} A\left(\frac{|u(z) - u(w)|}{|z-w|^s}\right)\; \frac{dz \, dw}{|z-w|^n}\bigg)
\end{equation}
 for every $u\in V^{s, A}(\rn)$ and $x,y\in\rn$. Here, $c$ is the same constant as in \eqref{23's<1} and $B$ is the Young function defined by \eqref{B}.
 This inequality can be handy in analyzing solutions to nonlocal equations and variational problems.}
\end{remark}

The characterization of the embeddings in question
for values of $s\in (1,n)$ requires that assumption  \eqref{convinf} be coupled with \eqref{conv0'}. Moreover, the optimal modulus of continuity is described in terms of both functions $\vartheta_s$ and $\varrho_s$, defined by equations \eqref{202} and \eqref{210}, and depends on whether
\begin{equation}\label{july30}
   \int^\infty \left(\frac t{A(t)}\right)^{\frac{s-1}{n-(s-1)}} \; dt =\infty,
\end{equation}
or
\begin{equation}\label{24}
   \int^\infty \left(\frac t{A(t)}\right)^{\frac{s-1}{n-(s-1)}} \; dt <\infty.
\end{equation}

\begin{theorem}{\rm{\bf [Case  $s\in (1,n)$]}}\label{thm:1<s<n}
Let $s\in (1,n)\setminus \N$ and let $A$ be a Young function satisfying conditions \eqref{convinf} and \eqref{conv0'}. Let $\vartheta_s$ and $\varrho_s$ be the functions defined by  \eqref{202} and \eqref{210}.
\\
  {\rm(i)} If
 \eqref{july30} holds, set
\begin{align}\label{july60}
    \sigma_s (r)=
    \vartheta_s(r) + \varrho_s(r) \qquad \hbox{for $r>0$}.
\end{align}
  {\rm(ii)} If \eqref{24} holds, set
\begin{align}\label{july61}
    \sigma_s (r)\approx \begin{cases}
    r & \quad \text{if\, $ 0< r<1$}
    \\
    \vartheta_s(r) + \varrho_s(r) & \quad \text{if\, $r\geq 1$.}
    \end{cases}
\end{align}
Then, $\sigma_s $ is a modulus of continuity, and
\begin{equation}\label{231<s<n}
    V^{s, A}_{d,1}(\rn)\to \mathcal{C}^{\sigma_s (\cdot)}(\rn).
\end{equation}
Namely, there exists a  constant $c$ depending on $n$, $s$ and $A$ such that
\begin{equation}\label{23'1<s<n}
   |u(x)-u(y)|\leq c \,\sigma_s(|x-y|)\, \big|\nabla ^{[s]}u\big|_{\{s\}, A, \rn}
\end{equation}
for $u\in V^{s, A}_{d,1}(\rn)$.
\\
The result is sharp, in the sense that if there exists a modulus of continuity $\omega$ such that
 $V^{s, A}_{d,1}(\rn)\to \mathcal{C}^{\omega (\cdot)}(\rn)$, then conditions  \eqref{convinf} and \eqref{conv0'} hold, and $ \mathcal{C}^{\sigma_s (\cdot)}(\rn)\to  \mathcal{C}^{\omega (\cdot)}(\rn)$.
\end{theorem}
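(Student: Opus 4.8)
\emph{Overall strategy.} My plan is the two–step scheme announced in the introduction: first convert one order of fractional smoothness into integrability of the gradient by invoking the optimal rearrangement–invariant embedding of Theorem~\ref{a} at the order $s-1$, and then run the first–order Morrey–type characterization of Theorem~\ref{CP}. Routing the first step through a rearrangement–invariant (rather than merely Orlicz) target is what makes the resulting composition stay sharp.

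\emph{Step 1: reduction to a first–order Sobolev space.} Given $u\in V^{s,A}_{d,1}(\rn)$, I would first record that $\nabla u\in V^{s-1,A}_d(\rn)$ with $\big|\nabla^{[s-1]}(\nabla u)\big|_{\{s-1\},A,\rn}=\big|\nabla^{[s]}u\big|_{\{s\},A,\rn}$: indeed $[s-1]=[s]-1$ and $\{s-1\}=\{s\}$, while the decay conditions built into $V^{s,A}_{d,1}(\rn)$ are exactly the ones that $\nabla u$ must satisfy to lie in $V^{s-1,A}_d(\rn)$. Since $s-1\in(0,n)\setminus\N$ and condition \eqref{conv0'} is condition \eqref{conv0n} written for the order $s-1$, Theorem~\ref{a} applies with $s$ replaced by $s-1$; let $\widehat A$ denote the Young function attached to $A$ and $s-1$ via \eqref{E:1}--\eqref{E:2}. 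When \eqref{july30} holds, \eqref{divinf} holds for the order $s-1$, so Theorem~\ref{a}(i) gives $\nabla u\in L(\widehat A,\tfrac n{s-1})(\rn)$; when \eqref{24} holds, Theorem~\ref{a}(ii) gives $\nabla u\in(L^\infty\cap L(\widehat A,\tfrac n{s-1}))(\rn)$. In either case, writing $X(\rn)$ for the target space, $\|\nabla u\|_{X(\rn)}\le c\,\big|\nabla^{[s]}u\big|_{\{s\},A,\rn}$ and $u\in V^1X(\rn)$.

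\emph{Step 2: the first–order embedding and the modulus.} I would then apply Theorem~\ref{CP} to $V^1X(\rn)$: once the function $\varpi_X$ defined by \eqref{varpi} is finite–valued with $\lim_{r\to0^+}\varpi_X(r)=0$, we get $V^1X(\rn)\to\mathcal C^{\varpi_X(\cdot)}(\rn)$, with this target optimal among spaces of uniformly continuous functions. Chaining with Step~1 produces \eqref{23'1<s<n} (hence \eqref{231<s<n}) with $\sigma_s$ replaced by $\varpi_X$, so the whole theorem reduces to the identification
\[
\varpi_X(r)\approx\sigma_s(r)\qquad\text{for }r>0,
\]
with $\sigma_s$ as in \eqref{july60} or \eqref{july61}. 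This is the crux: a computation of the associate–space norm $\varpi_X(r)=\big\|\rho^{-1/n'}\chi_{(0,r^n)}\big\|_{\overline X'(0,\infty)}$. In case~(i), $\overline X'$ is the associate of the representation space of the Orlicz–Lorentz space $L(\widehat A,\tfrac n{s-1})$, and, using the explicit description of that associate space together with inequalities relating the integrals in $\widetilde A$ defining $\widehat A$, $\EE$ and $\FF$, I expect $\varpi_X(r)$ to come out equivalent to $r^sB^{-1}(r^{-n})+r^sC^{-1}(r^{-n})$, with $B$, $C$ as in \eqref{B}, \eqref{C}, which by \eqref{dec14} and \eqref{27} equals $\approx\vartheta_s(r)+\varrho_s(r)$. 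In case~(ii), \eqref{associate-of-cap} gives $\overline X'(0,\infty)=L^1(0,\infty)+\overline Y'(0,\infty)$ with $Y=L(\widehat A,\tfrac n{s-1})$; the $L^1$ summand contributes $\big\|\rho^{-1/n'}\chi_{(0,r^n)}\big\|_{L^1(0,\infty)}\approx r$, the controlling term for small $r$ (since then $\chi_{(0,r^n)}$ has $X$–norm comparable to $1$), while the other summand again contributes $\approx\vartheta_s(r)+\varrho_s(r)$; together this reproduces \eqref{july61}. Finiteness of $\varpi_X$ and $\lim_{r\to0^+}\varpi_X(r)=0$ then follow from Proposition~\ref{prop:E}(ii) and Proposition~\ref{prop:F}(ii) — the latter available because $s\in(1,n)$ and \eqref{convinf} is assumed — and, since $\varpi_X$ is non–decreasing by construction, $\sigma_s$ is a modulus of continuity.

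\emph{Step 3: sharpness, and the main difficulty.} For the converse, suppose $V^{s,A}_{d,1}(\rn)\to\mathcal C^{\omega(\cdot)}(\rn)$ for some modulus of continuity $\omega$. I would obtain the necessity of \eqref{convinf} and of \eqref{conv0'} by contradiction, testing the embedding against explicit families of radial functions: failure of \eqref{conv0'} yields $u\in V^{s,A}_{d,1}(\rn)$ whose gradient lacks the local integrability that continuity requires, and failure of \eqref{convinf} yields such a $u$ that is unbounded near a point. Granted both conditions, Steps~1--2 apply, and I would deduce $\mathcal C^{\sigma_s(\cdot)}(\rn)\to\mathcal C^{\omega(\cdot)}(\rn)$ — i.e. $\omega\gtrsim\sigma_s$ near $0$ — by combining the optimality of the target in Theorem~\ref{CP} with that of $X(\rn)$ among rearrangement–invariant spaces in Theorem~\ref{a}: an almost extremal sequence for $V^1X(\rn)\to\mathcal C^{\varpi_X(\cdot)}(\rn)$ is lifted to a sequence in $V^{s,A}_{d,1}(\rn)$ along which $\big|\nabla^{[s]}u\big|_{\{s\},A,\rn}$ stays controlled. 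The two places where genuine work is required are the evaluation of $\varpi_X$ in Step~2 (the interplay between $\widehat A$ of order $s-1$ and the auxiliary functions $\EE$, $\FF$, and the sum–space bookkeeping of case~(ii)) and the loss–free transfer of extremality through Step~1 in the sharpness argument; this preservation of sharpness under iteration of rearrangement–invariant embeddings is the same mechanism seen in \cite{CPS_Advances}.
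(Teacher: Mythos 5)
Your Steps 1 and 2 are essentially the paper's route: pass to $\nabla u\in V^{s-1,A}_d(\rn)$ with equality of seminorms, apply Theorem \ref{a} at order $s-1$, feed the resulting Orlicz--Lorentz (or $L^\infty\cap$ Orlicz--Lorentz) bound into Theorem \ref{CP}, and identify the resulting modulus. The identification you leave at the level of ``I expect'' is precisely what the paper carries out, via \cite[Lemma 4.5]{cianchi_MZ} together with \eqref{july57}, \eqref{july34} and \eqref{july19}, giving $\nu_s\approx\vartheta_s+\varrho_s$, and, in case \eqref{24}, via \eqref{associate-of-cap} plus the comparisons $\nu_s(r)\lesssim r$ for $r\ge1$ and $\nu_s(r)\gtrsim r$ for $0<r\le1$ (which rest on \eqref{conv0'} and \eqref{ibero0}); so this half of your plan is sound, though the crux computation is only announced, not done.

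The genuine gap is Step 3. Sharpness of the composed embedding does not follow formally from the sharpness of each factor: the optimality of $X=L\big(\widehat A,\tfrac n{s-1}\big)$ in Theorem \ref{a} is among rearrangement-invariant targets tested over all of $V^{s-1,A}_d(\rn)$, and the optimality of $\mathcal C^{\varpi_X(\cdot)}(\rn)$ in Theorem \ref{CP} is tested over all of $V^1X(\rn)$; an almost extremal $v$ for the latter need not be comparable to $\nabla u$ for any $u\in V^{s,A}_{d,1}(\rn)$ with $\big|\nabla^{[s]}u\big|_{\{s\},A,\rn}$ under control, and you offer no construction performing this ``lifting'' -- you yourself flag it as the place where genuine work is required, but that work is exactly the proof. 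The paper does not argue this way: it proves sharpness directly with explicit trial functions in $V^{s,A}_{d,1}(\rn)$, namely radial functions as in \eqref{fi1} (seminorm controlled through \cite[Lemma 7.6]{ACPS_emb}), which give $\vartheta_s\lesssim\omega$ and force \eqref{convinf}, and, crucially, the non-radial functions \eqref{sep108} of the form $x_1$ times a radial profile, whose seminorm estimate is the whole content of Lemma \ref{lemma1} (with Lemma \ref{lemma2} used to reduce to non-increasing profiles), and which give $\varrho_s\lesssim\omega$ and force \eqref{conv0'}. Radial trial functions alone cannot produce the $\varrho_s$ addend, so your plan as written establishes neither $\mathcal C^{\sigma_s(\cdot)}(\rn)\to\mathcal C^{\omega(\cdot)}(\rn)$ nor the necessity of \eqref{conv0'}; likewise, ``failure of \eqref{conv0'} yields a gradient lacking local integrability'' is a hope, not an argument, whereas the paper extracts the necessity of both integral conditions quantitatively from the finiteness of the norms appearing in the lower bounds. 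The phenomenon of sharpness being preserved under iteration, mentioned in the introduction, is verified a posteriori through these trial functions; it is not a principle one may invoke in place of them.
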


The following local Lipschitz continuity result is a straightforward consequence of Theorem \ref{thm:1<s<n}.

\begin{corollary}\label{lip} Let $s\in (1,n)\setminus \N$ and let $A$ be a Young function satisfying conditions \eqref{conv0'} and \eqref{24}.
If $u \in V^{s,A}_{d,1}(\rn)$, then $u$ is locally Lipschitz continuous.
\end{corollary}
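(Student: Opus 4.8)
The plan is to read the statement off from part (ii) of Theorem \ref{thm:1<s<n}, once we check that the hypotheses of that theorem are in force under the present assumptions and that we fall into its second alternative.

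First I would verify that condition \eqref{convinf} is a consequence of \eqref{conv0'} and \eqref{24}. Since $s\in(1,n)$, an elementary computation gives
\[
\frac{s}{n-s}-\frac{s-1}{n-(s-1)}=\frac{n}{(n-s)(n-s+1)}>0,
\]
so the exponent entering \eqref{convinf} strictly exceeds the one entering \eqref{24}. On the other hand, by \eqref{incr} the function $t\mapsto t/A(t)$ is non-increasing, and the convergence of the integral in \eqref{24} forces $t/A(t)\to0$ as $t\to\infty$; in particular $t/A(t)\le1$ for all large $t$, whence $\big(t/A(t)\big)^{\frac s{n-s}}\le\big(t/A(t)\big)^{\frac{s-1}{n-(s-1)}}$ for such $t$. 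Integrating this inequality shows that \eqref{24} implies \eqref{convinf}. Thus $A$ satisfies both \eqref{convinf} and \eqref{conv0'}, and, since \eqref{24} is precisely the negation of \eqref{july30}, Theorem \ref{thm:1<s<n} applies in case (ii).

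Next I would exploit the form of $\sigma_s$ in this case. By \eqref{july61} one has $\sigma_s(r)\approx r$ for $0<r<1$, so inequality \eqref{23'1<s<n} yields a constant $c=c(n,s,A)$ such that
\[
|u(x)-u(y)|\le c\,|x-y|\,\big|\nabla^{[s]}u\big|_{\{s\},A,\rn}\qquad\text{whenever }|x-y|<1,
\]
for every $u\in V^{s,A}_{d,1}(\rn)$. Since the seminorm on the right-hand side is finite on $V^{s,A}_{d,1}(\rn)$, the function $u$ is Lipschitz continuous on every ball of radius less than $\tfrac12$, with a Lipschitz constant independent of the ball; hence $u$ is locally Lipschitz continuous --- in fact, chaining this estimate along the segment joining two arbitrary points shows that $u$ is even globally Lipschitz.

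No genuine obstacle arises here. The only point that is not entirely automatic is the comparison of the two integrability exponents combined with the observation that \eqref{24} forces $t/A(t)$ to decay to $0$ at infinity; this is exactly what allows \eqref{convinf} to be absorbed into \eqref{24}, so that Theorem \ref{thm:1<s<n}(ii) becomes applicable.
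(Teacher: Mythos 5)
Your proposal is correct and is essentially the paper's intended argument: the corollary is presented as a straightforward consequence of Theorem \ref{thm:1<s<n}, applied in case (ii) where $\sigma_s(r)\approx r$ for $0<r<1$, which is exactly what you do. Your verification that \eqref{conv0'} and \eqref{24} imply \eqref{convinf} (via the monotonicity of $t/A(t)$ from \eqref{incr} and the comparison of the exponents $\tfrac{s}{n-s}>\tfrac{s-1}{n-(s-1)}$) correctly supplies the step the paper leaves implicit, and your concluding observation that the estimate even yields a global Lipschitz bound is a harmless strengthening.
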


In the range of values $s\in (n,n+1)$, the existence of an embedding into a space of uniformly continuous functions again depends on a unique condition, which is now  \eqref{conv0'}. The expression of the optimal modulus of continuity $\sigma_s$ only involves the function $\varrho_s$, but takes a different form according to whether condition \eqref{july30} or \eqref{24} holds.

\begin{theorem}{\rm{\bf [Case  $s\in (n,n+1)$]}}\label{thm:n<s<n+1}
Let $s\in (n, n+1)\setminus \N$ and  let $A$ be a Young function satisfying condition  \eqref{conv0'}.  Let  $\varrho_s$ be the function defined by  \eqref{210}
\\
 {\rm(i)} If
 \eqref{july30} holds, set
\begin{align}\label{july62}
    \sigma_s (r)=
    \varrho_s(r)
    \qquad \hbox{for $r>0$}.
\end{align}
 {\rm(ii)} If \eqref{24} holds, set
\begin{align}\label{july63}
    \sigma_s (r)\approx \begin{cases}
    r & \quad \text{if\, $0<r<1$}
    \\
   \varrho_s(r) & \quad \text{if\, $r\geq 1$.}
    \end{cases}
\end{align}
Then,  $ \sigma_s $ is a modulus of continuity, and
\begin{equation}\label{23n<s<n+1}
    V^{s, A}_{d,1}(\rn)\to \mathcal{C}^{\sigma_s (\cdot)}(\rn).
\end{equation}
Namely, there exists a  constant $c$ depending on $n$, $s$ and $A$ such that
\begin{equation}\label{23'n<s<n+1}
   |u(x)-u(y)|\leq c \,\sigma_s(|x-y|)\, \big|\nabla ^{[s]}u\big|_{\{s\}, A, \rn}
\end{equation}
for $u\in V^{s, A}_{d,1}(\rn)$.
\\
The result is sharp, in the sense that if there exists a modulus of continuity $\omega$ such that
 $V^{s, A}_{d,1}(\rn)\to \mathcal{C}^{\omega (\cdot)}(\rn)$, then condition    \eqref{conv0'} holds, and $ \mathcal{C}^{\sigma_s (\cdot)}(\rn)\to  \mathcal{C}^{\omega (\cdot)}(\rn)$.
\end{theorem}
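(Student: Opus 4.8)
I would follow the two-step scheme outlined in the introduction. Throughout, $[s]=n$ and $\{s\}=s-n\in(0,1)$. \emph{First I reduce the $s$-th order fractional problem to a first-order one.} Given $u\in V^{s,A}_{d,1}(\rn)$, set $w=\nabla u$. Then $\nabla^{n-1}w=\nabla^{n}u\in V^{\{s\},A}(\rn)$, so $w\in V^{s-1,A}(\rn)$, and the decay requirement built into $V^{s,A}_{d,1}(\rn)$ — decay of $\nabla^{h}u$ for $1\le h\le n$ — is exactly the statement $w\in V^{s-1,A}_{d}(\rn)$. Since $s-1\in(n-1,n)\subset(0,n)\setminus\N$ and condition \eqref{conv0n} read for the parameter $s-1$ coincides with hypothesis \eqref{conv0'}, I may apply Theorem~\ref{a} with smoothness parameter $s-1$ to the vector field $w$. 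Under \eqref{july30} (the divergence alternative \eqref{divinf} for this parameter) part~(i) gives $|\nabla u|\in L(\widehat A,\tfrac{n}{s-1})(\rn)$ with $\||\nabla u|\|_{L(\widehat A,\frac{n}{s-1})(\rn)}\le c\,\big|\nabla^{n}u\big|_{\{s\},A,\rn}$, where $\widehat A$ is built from $A$ and $s-1$ as in \eqref{E:1}--\eqref{E:2}; under \eqref{24} (the convergence alternative) part~(ii) gives the same estimate with target $\big(L^\infty\cap L(\widehat A,\tfrac{n}{s-1})\big)(\rn)$. In either case $u\in V^{1}X(\rn)$, where $X(\rn)$ denotes the relevant (intersection) Orlicz--Lorentz space, with $\||\nabla u|\|_{X(\rn)}\lesssim\big|\nabla^{n}u\big|_{\{s\},A,\rn}$.

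\emph{The second and main step is to feed $X$ into Theorem~\ref{CP}.} The task is to identify the function $\varpi_X$ of \eqref{varpi}, namely $\varpi_X(r)=\|\rho^{-1/n'}\chi_{(0,r^{n})}(\rho)\|_{\overline X'(0,\infty)}$, where $\overline X(0,\infty)$ is the representation space of $X(\rn)$, with norm $\|\rho^{-(s-1)/n}f^{*}(\rho)\|_{L^{\widehat A}(0,\infty)}$. Since $\rho^{-1/n'}\chi_{(0,r^{n})}$ is itself non-increasing, the Hardy--Littlewood inequality \eqref{B.0} reduces the supremum defining the associate norm to non-increasing trial functions; I would then compute $\varpi_X$ explicitly and, via Proposition~\ref{prop:F} and property \eqref{27}, re-express the outcome through $C=\widetilde\FF$ and hence through $\varrho_s$. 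Under \eqref{july30} this yields $\varpi_X\approx\varrho_s=\sigma_s$. Under \eqref{24}, where $X=\big(L^\infty\cap L(\widehat A,\tfrac{n}{s-1})\big)(\rn)$, the identity \eqref{associate-of-cap} gives $\overline X'=L^{1}+L(\widehat A,\tfrac{n}{s-1})'$, and a direct estimate produces $\varpi_X(r)\approx\min\{r,\varrho_s(r)\}$; a short computation from the definition of $\FF$ and property \eqref{iberoinf} shows that \eqref{24} forces $\FF(t)\approx t^{n/(n-(s-1))}$ near infinity, hence $\varrho_s(r)\approx r$ for $0<r<1$, so $\varpi_X$ matches \eqref{july63}. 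In both cases $\varpi_X$ is finite-valued, non-decreasing, and satisfies $\lim_{r\to0^{+}}\varpi_X(r)=0$ by Proposition~\ref{prop:F}(ii) (applicable since $s\in(n,n+1)$); thus $\sigma_s$ is a modulus of continuity and Theorem~\ref{CP} gives $u\in\mathcal C^{\varpi_X(\cdot)}(\rn)=\mathcal C^{\sigma_s(\cdot)}(\rn)$ with $\|u\|_{\mathcal C^{\sigma_s(\cdot)}(\rn)}\le c\,\||\nabla u|\|_{X(\rn)}$. Combining with the first step yields \eqref{23n<s<n+1}--\eqref{23'n<s<n+1}. I expect this computation of $\varpi_X$ — matching the associate-space norm of the (intersection) Orlicz--Lorentz space on the one-parameter family $\rho^{-1/n'}\chi_{(0,r^{n})}$ with $\varrho_s$ — to be the main technical obstacle.

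\emph{Finally, sharpness.} To see that \eqref{conv0'} is necessary I would assume it fails and construct radial functions $u\in V^{s,A}_{d,1}(\rn)$ that are unbounded near a point — hence lie in no space $\mathcal C^{\omega(\cdot)}(\rn)$ — by the kind of test-function computation used to establish the necessity of \eqref{conv0n} in \cite{ACPS_emb,ACPS_inf}. For the optimality of $\sigma_s$, i.e. $\mathcal C^{\sigma_s(\cdot)}(\rn)\to\mathcal C^{\omega(\cdot)}(\rn)$ whenever $V^{s,A}_{d,1}(\rn)\to\mathcal C^{\omega(\cdot)}(\rn)$, I would build a family of functions $u_r\in V^{s,A}_{d,1}(\rn)$ with $\big|\nabla^{n}u_r\big|_{\{s\},A,\rn}\lesssim1$ whose $\mathcal C^{\sigma_s(\cdot)}(\rn)$-seminorm is $\gtrsim1$, with the seminorm essentially realized at scale $\approx r$; such $u_r$ would be obtained by lifting the extremals of the two embeddings used above — the rearrangement-invariant-optimal embedding of Theorem~\ref{a} and the optimal embedding of Theorem~\ref{CP}. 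The crucial point, exactly as for iterated first-order Sobolev embeddings with optimal rearrangement-invariant targets \cite{CPS_Advances}, is that this two-step passage loses no sharpness. Testing the assumed embedding against the $u_r$ and using that $\omega$ is equivalent to a non-decreasing function then forces $\sigma_s\lesssim\omega$, which is precisely $\mathcal C^{\sigma_s(\cdot)}(\rn)\to\mathcal C^{\omega(\cdot)}(\rn)$.
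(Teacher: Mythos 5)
Your positive direction follows the same route as the paper: differentiate once, apply Theorem~\ref{a} with smoothness parameter $s-1$ (noting that \eqref{conv0n} for $s-1$ is exactly \eqref{conv0'}), then feed the resulting gradient estimate into Theorem~\ref{CP}. The gap is that the identification $\varpi_X\approx\varrho_s$, which you assert and defer, is precisely where the regime $s\in(n,n+1)$ differs from $s\in(1,n)$ and where the work lies. By \cite[Lemma 4.5]{cianchi_MZ} and \eqref{july57}, $\varpi_X(r)$ is equivalent to the sum of $\big\|\rho^{-1+\frac sn}\chi_{(0,r^n)}\big\|_{L^{\widetilde A}(0,\infty)}$ and $r\big\|\rho^{-1+\frac{s-1}n}\chi_{(r^n,\infty)}\big\|_{L^{\widetilde A}(0,\infty)}$; the second term is $\approx\varrho_s(r)$ by \eqref{july34}, but the first is not reducible to $\varrho_s$ by Proposition~\ref{prop:F} and \eqref{27} alone. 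One must prove the absorption inequality \eqref{56}, which the paper obtains by introducing the auxiliary Young function $I$ of \eqref{59}, using $s>n$ to get $I(t)\le\widetilde A(t)$, and bounding $\FF(t)\gtrsim\widetilde A(c\,t)$ from below, whence $\FF\gtrsim I$ and the first term is $\lesssim\varrho_s(r)$. Without an argument of this kind the claimed equivalence is unproved (your handling of case \eqref{24} is fine modulo this same point, plus the easy bound $\varrho_s(r)\lesssim r$ for $r\ge1$).

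The more serious problem is the sharpness part. Your mechanism for the necessity of \eqref{conv0'} --- ``if it fails, construct radial functions in $V^{s,A}_{d,1}(\rn)$ unbounded near a point'' --- cannot work. Condition \eqref{conv0'} constrains $A$ near zero, and local boundedness is never at stake when $s\in(n,n+1)$: every $u\in V^{s,A}_{d,1}(\rn)$ is $n$-times weakly differentiable, hence belongs to $W^{n,1}_{\operatorname{loc}}(\rn)$ and coincides a.e.\ with a continuous function, whether or not \eqref{conv0'} holds. The failure of \eqref{conv0'} is visible only at large scales, through the divergence of $r\big\|\rho^{-1+\frac{s-1}n}\chi_{(r^n,\infty)}\big\|_{L^{\widetilde A}(0,\infty)}$; moreover, radial trial functions of the type \eqref{fi1} only generate the quantity $\big\|\rho^{-1+\frac sn}\chi_{(0,r^n)}\big\|_{L^{\widetilde A}(0,\infty)}$, which is finite for every Young function once $s>n$, so they detect neither \eqref{conv0'} nor the lower bound $\varrho_s\lesssim\omega$. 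The paper instead tests the assumed embedding on the anisotropic functions \eqref{sep108} (a factor $x_1$ times an iterated radial integral of a non-increasing $f$), bounds their seminorm by Lemma~\ref{lemma1}, and uses Lemma~\ref{lemma2} together with the duality \eqref{revholder} to turn the supremum over $f$ into $\big\|\rho^{-1+\frac{s-1}n}\chi_{[0,L]}\big\|_{L^{\widetilde A}(2\omega_n r,\infty)}$; letting $L\to\infty$ yields $\varrho_s\lesssim\omega$, and the finiteness of this norm forces \eqref{conv0'}. Your optimality step (``lift extremals of the two embeddings'') gestures at this but provides no construction, so as written both halves of the sharpness claim are unsubstantiated, and the necessity half rests on a false mechanism.
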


The criterion for local Lipschitz continuity ensuing from Theorem \ref{thm:n<s<n+1} is stated in the next corollary.

\begin{corollary}\label{lip1} Let $s\in (n, n+1)\setminus \N$ and let $A$ be a Young function satisfying conditions \eqref{conv0'} and \eqref{24}.
If $u \in V^{s,A}_{d,1}(\rn)$, then $u$ is locally Lipschitz continuous.
\end{corollary}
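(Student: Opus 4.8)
The plan is to deduce Corollary \ref{lip1} directly from Theorem \ref{thm:n<s<n+1}, case (ii). Since we are assuming conditions \eqref{conv0'} and \eqref{24}, part (ii) of Theorem \ref{thm:n<s<n+1} applies, and it tells us that the embedding \eqref{23n<s<n+1} holds with $\sigma_s$ given by \eqref{july63}, namely $\sigma_s(r) \approx r$ for $0 < r < 1$. Thus for $u \in V^{s,A}_{d,1}(\rn)$ and for any $x, y \in \rn$ with $|x-y| < 1$, inequality \eqref{23'n<s<n+1} combined with the equivalence $\sigma_s(r) \approx r$ near zero yields
\begin{equation*}
|u(x) - u(y)| \leq c'\, |x-y|\, \big|\nabla^{[s]}u\big|_{\{s\}, A, \rn},
\end{equation*}
with $c'$ depending on $n$, $s$ and $A$. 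This is precisely the statement that $u$ is Lipschitz continuous on every ball of radius $\tfrac 12$, hence locally Lipschitz continuous.

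The only point requiring a word of care is that the seminorm $\big|\nabla^{[s]}u\big|_{\{s\}, A, \rn}$ on the right-hand side is finite for every $u \in V^{s,A}_{d,1}(\rn)$, which is immediate from the definition \eqref{aug343} of the space together with \eqref{dec220}. One should also note that the behavior of $\sigma_s$ near zero in \eqref{july63} is dictated (via \eqref{dec83}, \eqref{27}, and \eqref{dec118}) by the behavior of $A$ near infinity, and that condition \eqref{24}, i.e. $\int^\infty (t/A(t))^{(s-1)/(n-(s-1))}\,dt < \infty$, is exactly what forces the ``$r$'' branch in \eqref{july63} rather than a genuinely sublinear modulus; no separate verification is needed here since it is already incorporated into Theorem \ref{thm:n<s<n+1}.

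There is essentially no obstacle: the corollary is a one-line specialization of the theorem, the sole content being the observation that a modulus of continuity which is equivalent to the identity near the origin encodes exactly local Lipschitz continuity. The proof can therefore be stated in a couple of sentences, exactly parallel to the proof of Corollary \ref{lip} from Theorem \ref{thm:1<s<n}.
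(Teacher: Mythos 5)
Your proposal is correct and is exactly how the paper treats this statement: the corollary is stated as a direct consequence of Theorem \ref{thm:n<s<n+1}, since under \eqref{conv0'} and \eqref{24} case (ii) gives $\sigma_s(r)\approx r$ for $0<r<1$, and \eqref{23'n<s<n+1} then yields a Lipschitz bound at small scales. Your side remark invoking \eqref{dec118} is unnecessary (that equivalence needs an index condition not assumed here), but as you note yourself, nothing in the argument depends on it.
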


We conclude this section by applications of our results to a couple of special families of fractional Orlicz-Sobolev spaces.
Though simple, these examples
give the flavor of the variety of moduli of continuity that can surface when enlarging the class of domain spaces from the standard Sobolev to the Orlicz-Sobolev fractional spaces.

\begin{example}{\rm{\bf [Power-times-logarithm type Young functions]}}\label{ex1}
{\rm
Consider  fractional Orlicz-Sobolev spaces built upon Young functions $A$
such that
\begin{equation}\label{E:young-domain}
    A(t)\simeq
        \begin{cases}
            t^{p_0} \left(\log (1+ \frac 1t)\right)^{\alpha_0} & \quad \text{near zero}
                \\
            t^p  \left(\log (1+t)\right)^\alpha & \quad \text{near infinity.}
        \end{cases}
\end{equation}
The behaviors in \eqref{E:young-domain} are admissible for a Young function, provided that either $p_0>1$ and $\alpha_0 \in \R$, or $p_0=1$ and $\alpha_0 \leq 0$, and either $p>1$ and $\alpha \in \R$ or $p=1$ and $\alpha \geq 0$.

 \smallskip\par\noindent
 \emph{Case $s\in (0,1)$}.
 By Theorem \ref{thm:s<1},  the optimal space $\mathcal{C}^{\sigma_s(\cdot)}(\rn)$ in the embedding
\begin{equation*}
    V^{s, A}(\rn) \to \mathcal{C}^{\sigma_s(\cdot)}(\rn)
\end{equation*}
fulfills
\begin{equation}\label{vartheta_final}
\sigma_s(r)\approx\begin{cases}
    \left(\log\left( 1+ \frac 1r\right)\right)^{1-\frac{s(\alpha+1)}{n}}&\quad\hbox{if $p= \frac ns , \alpha> \frac ns -1$}
    \\
    \\
     r^{s-\frac np}\, \left(\log\left(1+\frac 1r\right)\right)^{-\frac \alpha p}&\quad\hbox{if $p>\frac ns , \alpha\in \R$}
    \end{cases}
    \qquad \qquad\text{as $r\to 0 ,$}
\end{equation}
and
\begin{equation}\label{vartheta_2}
    \sigma_s(r)\approx\begin{cases}
    1 &\quad\hbox{if }
    \begin{cases}
    p_0=1, \alpha_0\leq 0
    \\
    1<p_0<\frac ns , \alpha_0\in \R
    \\
    p_0= \frac ns ,  \alpha_0 > \frac ns -1
    \end{cases}
    \\
    \\
     \left(\log\left(\log( 1+ r)\right)\right)^{1-\frac{s}n}&\quad\hbox{if $p_0= \frac ns ,\alpha_0=\frac ns -1$}
    \\
    \\
    \left(\log\left( 1+ r\right)\right)^{1-\frac{s(\alpha_0+1)}{n}}&\quad\hbox{if $p_0= \frac ns ,\alpha_0<\frac ns -1$}
    \\
    \\
    r^{s-\frac n{p_0}}\, \left(\log\left(1+r\right)\right)^{-\frac {\alpha_0} {p_0}}&\quad\hbox{if $p_0>\frac ns ,\alpha\in \R$}
    \end{cases}\qquad\qquad\text{as $r\to \infty .$}
\end{equation}
On the other hand, the necessary assumption \eqref{convinf} in Theorem \ref{thm:s<1} is not fulfilled for the values of the parameters $\alpha, p, \alpha_0, p_0$ not included in formulas \eqref{vartheta_final} and \eqref{vartheta_2}. Hence, no embedding of the form
\eqref{intro1} holds for those values.

\smallskip
\par
\noindent
 \emph{Case $s\in (1,n)$}.
By Theorem \ref{thm:1<s<n}},
\begin{equation}\label{600}
    V^{s, A}_{d,1} (\rn)\to \mathcal{C}^{\sigma_s(\cdot)}(\rn),
\end{equation}
where the optimal modulus of continuity $\sigma_s$ satisfies:
\begin{align}\label{sigma_100}
\sigma_s(r)\approx
\begin{cases}
\left(\log\left(1 + \frac 1r\right)\right)^{1-\frac {s(\alpha +1)}n}   &\quad \hbox{if $p=\frac ns, \alpha> \frac ns -1$}
\\
\\
 r^{s-\frac np} \left(\log\left(1 + \frac 1r\right)\right)^{-\frac {\alpha}p}   &\quad \hbox{if $\frac ns<p<\frac n{s-1}, \alpha\in \R,$}
  \\
 \\
 r\, \left(\log\left(1+\frac 1r\right)\right)^{1-\frac{(s-1)(\alpha+1)}{n}}
 &\quad \hbox{if  $p=\frac {n}{s-1}$ and $\alpha < \frac n{s-1} -1$}
 \\
 \\
r\, \left(\log\left(\log\left(1+\frac 1{r}\right)\right)\right)^{1-\frac{(s-1)}{n}}
 &\quad \hbox{if  $p=\frac {n}{s-1}$ and $\alpha =  \frac n{s-1} -1$}
 \\
 \\r &\quad \hbox{if} \begin{cases}
 p> \frac n{s-1}
 \\
 p=\frac {n}{s-1}$ and $\alpha >  \frac n{s-1} -1
\end{cases}
\end{cases}
\qquad\text{as $r\to 0 ,$}
\end{align}
and
\begin{equation}
    \label{583}
    \sigma_s(r)\approx \begin{cases}
    1
     &\quad \hbox{if}
    \begin{cases}
    p_0=1, \alpha_0\leq 0
    \\
    1<p_0<\frac ns , \alpha_0\in \R
    \\
     p_0= \frac ns ,  \alpha_0 > \frac ns -1
    \end{cases}
     \\
     \\
      \left(\log\left(\log( 1+ r)\right)\right)^{1-\frac{s}n}
     &\quad\hbox{if $p_0= \frac ns, \alpha_0=\frac ns -1$}
    \\
    \\
     \left(\log\left( 1+ r\right)\right)^{1 -\frac{s(\alpha_0+1)}{n}}
      &\quad\hbox{if $p_0= \frac ns ,\alpha_0<\frac ns -1$}
       \\
    \\
    r^{s-\frac n{p_0}}\, \left(\log\left(1+r\right)\right)^{-\frac {\alpha_0} {p_0}}
    &\quad\hbox{if  $\frac ns <p_0< \frac {n}{s-1}$ ,$\alpha_0\in \R$}
    \\
    \\
r \left(\log \left(1+r\right)\right)^{1-
\frac{(1+\alpha_0)(s-1)}{n}}
    &\quad\hbox{if $p_0=\frac {n}{s-1} , \alpha_0 >
    \frac {n}{s-1} -1$}
       \end{cases}
       \qquad\text{as $r\to \infty.$}
\end{equation}
The necessary assumptions \eqref{convinf} and \eqref{conv0'} in Theorem \ref{thm:1<s<n} are not fulfilled for the values of the parameters $\alpha, p, \alpha_0, p_0$ not included in formulas \eqref{sigma_100} and \eqref{583}. Consequently, no embedding of the form
\eqref{intro2} holds for those values.

\smallskip
\par\noindent
\emph{Case $s\in (n,n+1)$}.
Theorem \ref{thm:n<s<n+1} tells us that  the optimal modulus of continuity in
\begin{equation*}
    V^{s, A}_{d,1}(\rn)\to \mathcal{C}^{\sigma_s (\cdot)}(\rn),
\end{equation*}
obeys:
\begin{equation}
    \label{561}
\sigma_s(r)\approx
\begin{cases}
r^{s-\frac{n}{p}} \left(\log \left(1+\frac 1r\right)\right)^{-\frac{\alpha}{p}} &\qquad\hbox{if }
\begin{cases}
    p=1, \alpha\geq  0
    \\
   1<p<\frac{n}{s-1}, \alpha\in \R
    \end{cases}
\\
\\
r\, \left(\log\left(1
+\frac 1r\right)\right)^{1 - \frac{(s-1)(\alpha+1)}{n}}&\qquad\hbox{if $p= \frac n{s-1} , \alpha< \frac n{s-1} -1$}
\\
\\
r\, \left(\log\left(\log\left(1+\frac 1{r}\right)\right)\right)^{1-\frac{(s-1)}{n}}&\qquad\hbox{if $p= \frac n{s-1} , \alpha=\frac n{s-1} -1$}
\\
\\
r &\qquad\hbox{if }
\begin{cases}p= \frac n{s-1} , \alpha> \frac n{s-1} -1
    \\
p>\frac{n}{s-1}, \alpha\in\R
\end{cases}
\end{cases}
\qquad\text{as $r\to 0$,}
\end{equation}
and
\begin{align}\label{589}
    \sigma_s (r)\approx
    \begin{cases}
  r^{s- \frac n{p_0}}\, \left(\log\left(1+r\right)\right)^{-\frac{\alpha_0}{p_0}} &\quad
    \hbox{if }
 \begin{cases} p_0=1, \alpha_0\leq 0
\\
1<p_0<\frac n{s-1}, \alpha_0\in\R
\end{cases}
    \\
    \\
    r \left(\log\left(1+r\right)\right)^{1-\frac{(s-1)(\alpha_0 +1)}{n}} &\quad \text{if\, $p_0=\frac{n}{s-1}$, $\alpha_0 > \frac{n}{s-1} -1$}
     \end{cases}
     \qquad \text{as $r\to \infty$.}
\end{align}
The necessary assumption \eqref{conv0'} in Theorem \ref{thm:n<s<n+1} fails for the values of the parameters $\alpha, p, \alpha_0, p_0$ which are missing in formulas \eqref{561} and \eqref{589}. Therefore, no embedding of the form
\eqref{intro2} can hold for those values.
\end{example}

\medskip

\begin{example}{\rm{\bf [Exponential type Young functions]}}\label{ex2}
{\rm
Consider  fractional Orlicz-Sobolev spaces associated with Young functions obeying:
\begin{equation}\label{E:young_exp_-domain}
    A(t)\simeq
        \begin{cases}
           e^{-t^{\frac{1}{\gamma_0}}}& \quad \text{near zero}
                \\
           e^{t^\gamma} & \quad \text{near infinity,}
        \end{cases}
\end{equation}
 where $\gamma_0 <0 $ and $\gamma >0 $.
\\ The necessary  condition \eqref{conv0'} in Theorems \ref{thm:1<s<n} and \ref{thm:n<s<n+1} does not hold, whatever $\gamma_0$ is.
Therefore, if $s\in (1,n+1)$, no embedding of the form \eqref{intro2} can hold for Young functions $A$ as in \eqref{E:young_exp_-domain}.
\\ We shall thus focus on the case when $s\in (0,1)$. An application of
 Theorem \ref{thm:s<1}, whose assumption \eqref{convinf} is satisfied for every $\gamma >0$, yields
\begin{equation*}
    V^{s, A}(\rn) \to \mathcal{C}^{\sigma_s(\cdot)}(\rn),
\end{equation*}
where the optimal modulus of continuity $\sigma_s$ fulfills:
$$\sigma_s(r) \approx \begin{cases}
r^s \left(\log\left(1+\frac 1r\right)\right)^{\frac 1{\gamma}} &\qquad \hbox{as $r\to 0$}
\\
\\
r^s \left(\log\left(1+r\right)\right)^{\gamma_0} &\qquad \hbox{as $r\to \infty$}.
\end{cases}
$$
}
\end{example}

\section{Proofs of the main results}\label{proofs}

This section is devoted to proofs of Theorems \ref{necessity}, \ref{thm:s<1}, \ref{thm:1<s<n} and \ref{thm:n<s<n+1}.  Each of them is the subject of one of the following subsections.

\subsection{Proof of Theorem \ref{necessity}}
The argument of the proof of Theorem \ref{necessity} rests upon an appropriate scaling of suitably chosen trial functions in an embedding of the form \eqref{nov100}, coupled with basic properties of Young functions.

\begin{proof}[Proof of Theorem \ref{necessity}]  Assume, by contradiction, that embedding \eqref{nov100} holds for some $s\in (n+1, \infty) \setminus \N$.
Let $\xi \in C^\infty_0(\rn)$  be  a nonnegative function such that $\nabla \xi (0)\neq 0$. For each $k \in \N$,   consider the function $u_k \colon  \rn \to \mathbb  R$ defined as
\begin{equation}\label{100a}
u_k (x)= k^{s-n} \xi \Big(\frac x k\Big) \qquad \hbox{for}\;\; x\in \rn\,.
\end{equation}
Since $u_k \in C^\infty_0(\rn)$, we have that
$$|\{ |\nabla ^k u_k|> t\}| < \infty \qquad \text{ for  $t>0$,}$$
 for  $k=0, 1, \dots, [s]$.
\\
By \cite[Inequality (6.2)]{ACPS_inf},
 there exists a constant $c$, independent of $k$, such that
\begin{equation}\label{may30}
|\nabla ^{[s]}u_k|_{\{s\}, A, \rn} \leq c.
\end{equation}
Embedding \eqref{nov100} and inequality \eqref{may30} imply that there exists a constant $c$ such that
\begin{equation}\label{nov102}
\|u_k\|_{\mathcal C^{\omega(\cdot)}(\rn)} \leq c
\end{equation}
for every $k\in \N$.
Therefore,
    \begin{equation}\label{nov103}
   |u_k(x)-u_k(0)|\leq c \,\omega (|x|) \qquad \text{for $x \in \rn$.}
    \end{equation}
Since we are assuming that   $\nabla \xi (0)\neq 0$, we may choose $x = \frac{\nabla \xi (0)}{|\nabla \xi (0)|}$ in \eqref{nov103}.
We have that
\begin{equation*}\label{nov104}
    \xi(x) = \xi(0) + x \cdot \nabla \xi (0) + \mathcal O (x^2) \qquad \text{as $x\to 0$.}
\end{equation*}
Thus,
\begin{align}\label{nov105}
  c&\geq  \frac{\big|u_k\big(\frac{\nabla \xi (0)}{|\nabla \xi (0)|}\big) - u_k(0)\big|}{\omega(1)} = k^{s-n} \frac{\big|\xi\big(\frac{\nabla \xi (0)}{k|\nabla \xi (0)|}\big) - \xi(0)\big|}{\omega(1)}\\ \nonumber & = k^{s-n} \frac{\big|\frac{|\nabla \xi (0)|}{k} + \mathcal O (j^{-2})\big|}{\omega(1)} \geq k^{s-n-1} \frac{|\nabla \xi (0)|}{\omega(1)} +\mathcal O \big(k^{s-n-2\big)}.
\end{align}
Letting $k\to \infty$ in \eqref{nov105} yields a contradiction.
\end{proof}

\subsection{Proof of Theorem \ref{thm:s<1}: case $s\in (0,1)$}

As mentioned in Section \ref{intro}, our approach to Theorem \ref{thm:s<1} starts from an extension argument, and differs from those already exploited in the literature in the available proofs of the H\"older continuity of fractional Sobolev functions. Our original attempts consisted of adapting those proofs to the Orlicz realm. However, all of them
only produce results that are, in general, weaker than the anticipated ones. This is especially apparent in borderline situations, where stronger assumptions on $s$ and $A$ than in Theorem \ref{thm:s<1} are required for the space $V^{s,A}(\rn)$ to be embedded into a space of uniformly continuous functions, and non-optimal moduli of continuity are produced.

Consider, for instance, Young functions $A$ as in Example \ref{ex1}, namely obeying \eqref{E:young-domain}. Customary arguments as in \cite{DPV}, or \cite{Miro}, or yet \cite{Triebel} enable one to conclude that, when $p=\frac ns$, functions from $V^{s,A}(\rn)$ are continuous only if $\alpha > \frac ns$. By contrast, an application of Theorem  \ref{thm:s<1} just requires that $\alpha > \frac ns -1$. Furthermore, those methods yield a modulus of continuity which behaves like $\log (1+\frac 1r)^{1-\frac{\alpha s}n}$ as $r\to 0^+$, which is not sharp, as demonstrated by equation \eqref{vartheta_final}.

\begin{proof}[Proof of Theorem \ref{thm:s<1}] The fact that $\sigma_s$ is a modulus of continuity is a consequence of properties (ii) and (iii) of Proposition \ref{prop:E}.
Let $u\in V^{s, A}(\rn)$. Assume also, for the time being, that $u$ is locally H\"older continuous. Let $\psi \colon  \rn \to [0, \infty)$ be the function defined as
\begin{equation*}
	\psi(y)=\frac{(n+1)} {\omega_n}(1-|y|)_+ \qquad \text{for $y \in \rn$,}
\end{equation*}
where $\omega_n$ denotes the Lebesgue measure of the unit ball in $\rn$ and the subscript \lq\lq$+$" stands for positive part, and let $U\colon \rn \times \R \to \R$ be the function defined by
\begin{equation}\label{E:15}
	U(x,t)=\int_{\R^n}\psi(y)u(x+ty)\,dy\qquad\text{for $(x,t)\in\R^n\times \R$.}
\end{equation}
Since we are currently assuming that $u$ is locally H\"older continuous, we have that   $U$ is also H\"older continuous and, in particular,
\begin{align}
    u(x) = \lim_{t\to 0}U(x,t) = U(x,0) \qquad \text{for $x\in \rn$.}
\end{align}
For the same reason, the classical bound
\begin{align}\label{E:16}
	|\nabla U(x,t)|
		 \le \frac{(n+1)(n+2)}{|t|\,\omega_n}\int_{\{|y|<1\}} |u(x+ty)-u(x)|\,dy
			\qquad\text{for a.e. $(x,t)\in\R^n\times\R$}
\end{align}
ensures that $U\in W^{1,1}_{\rm loc}(\rn \times \R)$.
One can show that
\begin{equation}\label{2}
\int_{\R} \int_{\rn} A\left( |t|^{1-s} |\nabla U(x,t)|\right) \;  \frac {dx\,dt}{|t|} \leq   \int_{\rn} \int_{\rn} A\left(c\frac{|u(x) - u(y)|}{|x-y|^s}\right)\; \frac{dx \, dy}{|x-y|^n}
\end{equation}
for some constant $c$ depending on $n$, see \cite[Inequality (5.15)]{ACPS_emb}.
\\
Now,  observe that, if   $v\in W^{1,1}_{\operatorname{loc}}(\rn)\cap \mathcal C(\rn)$, then
\begin{equation}\label{1}
|v(x) - v(y)|\leq c\, \left(\int_{B_\delta (x)} \frac{|\nabla v(z)|}{|x-z|^{n-1}}\; dz +
\int_{B_\delta (y)} \frac{|\nabla v(z)|}{|y-z|^{n-1}}\; dz \right) \qquad\text{for  $x,y\in\R^n$,}
\end{equation}
 for some constant $c=c(n)$,
where $\delta = |x-y|$ and $B_\delta (x)$ denotes the ball centered at $x$, with radius $\delta$.  Inequality \eqref{1} is a consequence of the following chain:
\begin{align}\label{jan1}
    |v(x) - v(y)|&\leq \frac{c}{\delta^n}\int_{B_{\delta/2}(\frac {x+y}2)}|v(x)-v(z)| + |v(z)-v(y)|\, dz
    \\ \nonumber &\leq c'\int_{B_{\delta/2}(\frac {x+y}2)}\frac{|\nabla v(z)|}{|x-z|^{n-1}}+ \frac{|\nabla v(z)|}{|y-z|^{n-1}}\, dz
    \\ \nonumber &\leq
    c'\, \left(\int_{B_\delta (x)} \frac{|\nabla v(z)|}{|x-z|^{n-1}}\; dz +
\int_{B_\delta (y)} \frac{|\nabla v(z)|}{|y-z|^{n-1}}\; dz \right) \qquad\text{for $x,y\in\R^n$,}
    \end{align}
    and for some constants $c=c(n)$ and $c'=c'(n)$,
    where the second inequality holds owing to \cite[Lemma 1, Section 4.5.2]{EG}. Notice that, although this lemma is stated for smooth functions, it continues to hold under our assumptions on  $v$.
\\
Thus, if we fix $x, y \in \rn$ and set $\delta=|x-y|$,  then inequality \eqref{1} applied (with $\rn$ replaced by $\mathbb R^{n+1}$) to the function $U$ yields:
\begin{multline}
\label{3}
|u(x) - u(y)|=|U(x, 0) - U(y, 0)| \\ \leq c\, \bigg(\int_{|z-x|^2 + t^2 \leq \delta ^2} \frac{|\nabla U(z, t)|}{(|x-z|^2 +t^2)^{\frac n2}}\; dz\, dt
  +
\int_{|z-y|^2 + t^2 \leq \delta ^2} \frac{|\nabla U(z, t)|}{(|y-z|^2 + t^2)^{\frac n2}}\; dz\, dt \bigg).
\end{multline}
Let us focus on the former integral on the rightmost side of equation \eqref{3}, since the latter can be handled analogously.
An application of H\"{o}lder's inequality \eqref{holder} with respect to the measure
$d\mu =   \frac{dz\,dt}{|t|}$
tells us that
\begin{align}\label{4}
\int_{B_\delta (x,0)} \frac {|\nabla U(z,t)|}{(|x-z|^2 + t^2)^{\frac n2}} \; dz \, dt &=
   \int_{B_\delta (x,0)} \frac {|t|^{1-s} |\nabla U(z,t)| |t|^s}{(|x-z|^2 + t^2)^{\frac n2}}\; d\mu
   \\ &
   \leq 2\, \Big \| |\nabla U(z,t)| \, |t|^{1-s}\Big\|_{L^A (d\mu)} \bigg \| \frac{\chi_{B_\delta (x,0)} (z,t) \, |t|^s}{(|x-z|^2 + t^2)^{\frac n2}}\bigg\|_{L^{\widetilde{A}}(d\mu)}.\nonumber
\end{align}
Here, and in what follows, $\chi_\Omega$ denotes the characteristic function of a set $\Omega \subset \rn$.
\\
From inequality \eqref{2} and the definition of Luxemburg  norm we deduce that
\begin{equation}\label{5}
\Big \| |\nabla U(z,t)| \, |t|^{1-s}\Big\|_{L^A (d\mu)}\leq c\, |u|_{s,A,\rn}.
\end{equation}
On the other hand,
\begin{align}\label{6}
\bigg \| \frac{\chi_{B_\delta (x,0)} (z,t) \, |t|^s}{(|x-z|^2 + t^2)^{\frac n2}}\bigg\|_{L^{\widetilde{A}}(d\mu)}
& =  \bigg \| \frac{\chi_{B_\delta (0,0)} (z,t) \, |t|^s}{(|z|^2 + t^2)^{\frac n2}}\bigg\|_{L^{\widetilde{A}}(d\mu)}
 \\ &
 = \inf\bigg \{ \lambda >0 : \int\int_{\{|z|^2+t^2 \leq \delta^2\}}
 \widetilde{A} \left(\frac{|t|^s}{\lambda \, (|z|^2+ t^2)^{\frac n2}}\right)\; d\mu \leq 1\bigg\}.
 \nonumber
\end{align}
Define the function $G \colon  (0, \infty) \to [0, \infty)$
as
$$G(t)= \int_t^\infty \frac{ \widetilde{A}(\tau)}{\tau^{1+ \frac n{n-s}}}\; d\tau\qquad \text{for $t >0$.}$$
One has that
\begin{align}\label{7}
 \int\int_{\{|z|^2+t^2 \leq \delta^2\}}
 \widetilde{A}& \left(\frac{|t|^s}{\lambda \, (|z|^2+ t^2)^{\frac n2}}\right) \; d\mu
 =
  \int\int_{\{t^2(|y|^2 +1) \leq \delta^2\}}
 \widetilde{A} \left(\frac{|t|^{s-n}}{\lambda \, (|y|^2+ 1)^{\frac n2}}\right)\, |t|^{n-1}\; dy \, dt
 \\ &
 = 2\, \int\int_{\{t^2(|y|^2 +1) \leq \delta^2, t>0\}}
 \widetilde{A} \left(\frac{t^{s-n}}{\lambda \, (|y|^2+ 1)^{\frac n2}}\right)\, t^{n-1}\; dy \, dt\nonumber
 \\&
 =
 \frac 2{(n-s)\, \lambda^{\frac n{n-s}}}\, \int_{\rn} \int_{\frac{\delta^{s-n}}{\lambda\, (|y|^2 + 1)^{\frac s2}}}^\infty \frac{ \widetilde{A}(\tau)}{\tau^{1+ \frac n{n-s}}}
\; d\tau\, \frac{dy}{(|y|^2 + 1)^{\frac n2 \frac{n}{n-s}}}\nonumber
\\&
=
\frac 2{(n-s)\, \lambda^{\frac n{n-s}}}\,\int_{\rn}  G\left(\frac{\delta^{s-n}}{\lambda\, (|y|^2 + 1)^{\frac s2}}\right) \, \frac{dy}{(|y|^2 + 1)^{\frac n2 \frac{n}{n-s}}}\nonumber
\\ &
=\frac{c}{\lambda^{\frac n{n-s}}} \int_0^\infty G\left(\frac{\delta^{s-n}}{\lambda\, (\tau^2 + 1)^{\frac s2}}\right) \, \frac{\tau^{n-1}}{(\tau^2 + 1)^{\frac n2 \frac{n}{n-s}}} \, d\tau\nonumber
\\ &
\leq \frac{c'}{\lambda^{\frac n{n-s}}} \int_0^1 G\left(\frac{\delta^{s-n}}{\lambda\, 2^{\frac s2}}\right) \, d\tau +
\frac{c}{\lambda^{\frac n{n-s}}} \int_1^\infty G\left(\frac{\delta^{s-n}}{\lambda\, (2\, \tau)^{s}}\right) \, \tau^{n-1 - n\frac{n}{n-s}}\, d\tau
\nonumber
\\& =
\frac {c'}{\lambda^{\frac n{n-s}}}\, G\left(\frac{\delta^{s-n}}{\lambda \, 2^{\frac s2}}\right) +
\frac{c}{\lambda^{\frac n{n-s}}} \int_1^\infty G\left(\frac{\delta^{s-n}}{\lambda\, (2\, \tau)^{s}}\right) \, \tau^{n-1 - n\frac{n}{n-s}}\, d\tau,
\nonumber
\end{align}
where  the first equality is due to the change of variables
$z= t\,y$, the fifth one to the use of polar coordinates, and the inequality
to the fact that $G$ is a decreasing function. Here, $c$ and $c'$ are constants depending on $n$ and $s$.
Moreover, by the change of variable $\theta= \frac{\delta^{s-n}}{\lambda \, (2\, \tau)^s}$,
\begin{align}\label{9}
\frac{1}{\lambda^{\frac n{n-s}}} \int_1^\infty G\left(\frac{\delta^{s-n}}{\lambda\, (2\, \tau)^{s}}\right) \, \tau^{n-1 - n\frac{n}{n-s}}\, d\tau= c'' \, \delta ^n \, \int_0^{\frac{\delta^{s-n}}{\lambda \, 2^s}} G(\theta) \, \theta^{\frac n{n-s} -1}\; d\theta
\end{align}
for some constant $c''=c''(n,s)$.
Notice that
\begin{equation}\label{10}
\EE(t)= t^{\frac n{n-s}}\, G(t) \qquad \text{for $t>0$,}
\end{equation}
where $\EE$ is the function defined by \eqref{\EE}.
Thereby,
\begin{equation}\label{10''}
 \int_0^t G(\theta)\, \theta^{\frac n{n-s} -1}\;d\theta = \int_0^t \frac{\EE(\theta)}{\theta} \; d\theta \leq \EE(t) \qquad \text{for $t>0$,}
\end{equation}
where the inequality holds owing to property \eqref{incr} applied to the Young function $\EE$.
Equations \eqref{9} and \eqref{10''} imply that
\begin{equation}\label{11}
\frac{1}{\lambda^{\frac n{n-s}}} \int_1^\infty G\left(\frac{\delta^{s-n}}{\lambda\, (2\, \tau)^{s}}\right) \, \tau^{n-1 - n\frac{n}{n-s}}\, d\tau \leq c''\, \delta ^n \, \EE\left(\frac{\delta^{s-n}}{\lambda \, 2^s}\right).
\end{equation}
From equations \eqref{7}, \eqref{10}, and \eqref{11} one infers that
\begin{align}\label{12}
\int\int_{\{|z|^2+ t^2 \leq \delta^2\}} \widetilde{A}\left(\frac{|t|^s}{\lambda(|z|^2 + t^2)^{\frac n2}}\right)\; d\mu\leq
 \frac{c'}{\lambda^{\frac n{n-s}}}\,
 G\left(\frac{\delta^{s-n}}{\lambda\, 2^{\frac s2}}\right)
 + c''\, \delta^n \EE\left( \frac{\delta^{s-n}}{\lambda\, 2^s}\right)
 \leq c \, \delta^n \EE\left( \frac{c\, \delta^{s-n}}{\lambda}\right)
\end{align}
for some constant $c=c(n,s)$.
Observe that the equation
$$c\, \delta^n\, \EE\left (\frac{c\, \delta^{s-n}}{\lambda}\right) =1$$
is equivalent to
$$\lambda = \frac c{\delta^{n-s} \EE^{-1} \left(\frac {\delta ^{-n}}{c}\right)}.
$$
Hence, by equations \eqref{min} and \eqref{AAtilde},
\begin{align}\label{july20}
    \lambda \approx \frac {1}{\delta^{n-s} \EE^{-1} \left(\delta ^{-n}\right)} \approx \vartheta_s(\delta) \qquad \text{for $\delta >0$,}
\end{align}
with equivalence constants depending on $n$ and $s$.
\\
From equations \eqref{6}, \eqref{12} and \eqref{july20}  we deduce that
\begin{equation}\label{13}
\bigg\|\frac{\chi_{B_{\delta}(x, 0)}(z,t)\, |t|^s}{\left(|x-z|^2 + t^2\right)^{\frac n2}}\bigg\|_{L^{\widetilde{A}}(d\mu)}
\leq c\, \vartheta_s (\delta)\qquad \hbox{for $x\in \rn$,}
\end{equation}
and for some constant $c=c(n,s)$, where $\vartheta_s$ is the function defined by \eqref{202}.
Inequalities \eqref{3}, \eqref{4}, \eqref{5} and \eqref{13} yield
\begin{equation}\label{14'}
    |u(x) -u(y)|\leq c\, \vartheta_s(|x-y|)\, |u|_{s,A,\rn}\qquad \hbox{for $x,y\in\rn$},
\end{equation}
and  for some constant $c=c(n,s)$. Inequality \eqref{23's<1} is thus established under the current assumption that $u$ is locally H\"older continuous.
\\ The next step consists of a derivation
of the modular inequality \eqref{paseky1}  from  \eqref{23's<1}.
To this purpose, set
$$M=  \int_{\rn} \int_{\rn} A\left(\frac{|u(x) - u(y)|}{|x-y|^s}\right)\; \frac{dx \, dy}{|x-y|^n},$$
and let $A_M$ be the Young function given by
\begin{equation}\label{AM}
A_M(t)= \frac{A(t)}M \qquad \text{ for $t \geq 0$.}
\end{equation}
Then, $$\widetilde{A_M}(t)= \frac 1M \widetilde A(Mt) \qquad \text{for $t \geq 0$,}$$
and the function $\EE_M$ defined as in \eqref{\EE}, with $A$ replaced by $A_M$, obeys the equation
$$\EE_M (t) = \frac 1M \EE(Mt) \qquad \text{for $t > 0$.}$$
Since $\EE_M^{-1}(t)= \frac 1M \EE^{-1}(Mt)$, the function $\vartheta_s^M$ obtained by replacing $A$ with $A_M$ in definition \eqref{202} is such that
\begin{equation}\label{sigmaM}
\vartheta_s^M(r) = \frac{M}{r^{n-s}\EE^{-1}(Mr^{-n})} \leq r^s \widetilde \EE^{-1}(Mr^{-n}) = r^s B^{-1}(Mr^{-n})\qquad \text{for $r>0$.}
\end{equation}
Notice that the first inequality holds owing to property \eqref{AAtilde}. Since, by the choice of $M$ and the definition of the seminorm,
$$|u|_{s,A_M, \rn}\leq 1,$$
an application of inequality \eqref{23's<1}  with $A$ replaced by $A_M$ yields \eqref{paseky1}.  This application is legitimate since the constant $c$ in \eqref{23's<1} is independent of $A$.
\\ Now, assume first that $A$ is finite-valued. Owing to  Theorem \ref{thm_modular}, there exist $\lambda_0>0$ and a sequence $\{u_k\}\subset  V^{s,A}(\rn)\cap C^\infty(\rn)$ such that
\begin{align}\label{july11}
    u_k(x) \to u(x) \qquad \text{for a.e. $x\in \rn$,}
\end{align}
and
\begin{align}\label{july6}
    \lim_{k\to \infty}J_{s,A}\Big(\frac{u_k-u}\lambda\Big)=0
\end{align}
for every $\lambda \geq \lambda_0$.
\\ Fix  $\varepsilon \in (0,1)$. The convexity of the function $A$ ensures that, for every $\lambda >0$,
\begin{align}\label{july4}
    J_{s,A}\Big(\frac{u_k}{\lambda}\Big)\leq \varepsilon J_{s,A}\Big(\frac{u_k-u}{\lambda\varepsilon}\Big) +(1-\varepsilon) J_{s,A}\Big(\frac{u}{\lambda (1-\varepsilon) }\Big)
\end{align}
for $k \in \N$.
By inequality \eqref{july4} and property \eqref{july6}, there exists a constant $c'$ such that
\begin{align}\label{july8}
    J_{s,A}\Big(\frac{u_k}{\lambda}\Big)\leq c'
\end{align}
for $k \in \mathbb N$, provided that $\lambda$ is sufficiently large.
An application of inequality \eqref{paseky1}
with $u$ replaced with $u_k$ tells us that
\begin{align}\label{july9}
    \frac{|u_k(x)-u_k(y)|}{\lambda}\leq c |x-y|^s B^{-1}\bigg(\frac 1{|x-y|^n}J_{s,A}\Big(\frac{u_k}{\lambda}\Big)\bigg)\qquad \text{for $x, y \in \rn$,}
\end{align}
and every $\lambda >0$. Combining inequalities \eqref{july8} and \eqref{july9}, and recalling equation \eqref{min} and definition \eqref{202} of the function $\vartheta_s$
yield
\begin{align}\label{july10}
\frac{|u_k(x)-u_k(y)|}{\lambda}\leq c\,\vartheta_s(|x-y|) \qquad \text{for $x, y \in \rn$,}
\end{align}
and for some constant $c$ independent of $k$.
This shows that the sequence $\{u_k\}$ is equicontinuous.
\\ Next, fix $x_0\in \rn$ for which the limit \eqref{july11} holds and $R>0$. Since
\begin{align*}
   \frac{|u_k(x)|}{\lambda}\leq \frac{|u_k(x)-u_k(x_0)|}{\lambda}+ \frac{|u_k(x_0)|}{\lambda} \qquad \text{for  $x\in \rn$,}
\end{align*}
and for $k \in \N$, from inequality \eqref{july10} we infer that the sequence $\{u_k\}$ is also equibounded in $B_R(x_0)$. Thanks to the arbitrariness of $R$, we deduce from Ascoli-Arzel\`a's theorem that there exists a subsequence of $\{u_k\}$, still indexed by $k$, and a continuous function $\overline u$ such that
\begin{align}\label{july12}
    u_k(x) \to \overline u(x)\qquad \text{for  $x\in \rn$.}
\end{align}
Equations \eqref{july11} and \eqref{july12} imply that
$$u= \overline u.$$
Owing to inequalities \eqref{july4} and \eqref{july9},
\begin{align}\label{july13}
     \frac{|u_k(x)-u_k(y)|}{\lambda}\leq c  |x-y|^s B^{-1}\bigg(\frac 1{|x-y|^n}
     \bigg(
     \varepsilon J_{s,A}\Big(\frac{u_k-u}{\lambda\varepsilon}\Big) +(1-\varepsilon) J_{s,A}\Big(\frac{u}{\lambda (1-\varepsilon) }\Big)\bigg)
     \bigg)
\end{align}
for some constant $c$, for $x, y \in \rn$, and for $k\in \mathbb N$. Passing to the limit as $k \to \infty$ in equation \eqref{july13} yields
\begin{align}\label{july14}
     \frac{|u(x)-u(y)|}{\lambda}\leq c  |x-y|^s B^{-1}\bigg(\frac {(1-\varepsilon)}{|x-y|^n}
      J_{s,A}\Big(\frac{u}{\lambda (1-\varepsilon) }\Big)
     \bigg) \qquad \text{for $x, y \in \rn$.}
\end{align}
Passing to the limit as $\varepsilon \to 0^+$ in the latter inequality, and replacing $u$ by $\frac{\lambda u}{|u|_{s,A,\rn}}$ in the resultant inequality yield  inequality \eqref{14'}.
This concludes the proof of inequality \eqref{23's<1} in the case when $A$ is finite-valued.

Assume next that $u \in V^{s,A}(\rn)$ and $A(t)=\infty$ for large $t$. Consider a  Young function $\overline {A}$ which agrees with $A$ near $0$ and equals  $t^p$ for large $t$, for some $p > \frac ns$. Since $A$ dominates $\overline A$ globally, one has that $ V^{s,A}(\rn)\to V^{s,\overline {A}}(\rn)$. By inequality \eqref{23's<1}  applied with $A$ replaced by $\overline A$,
\begin{equation}\label{july14bis}
   |u(x)-u(y)|\leq c \,\vartheta_s(|x-y|)\, |u|_{s, \overline A, \rn} \qquad \text{for $x, y\in \rn$,}
\end{equation}
and for some constant $c$.
One can verify that
$$\vartheta_s (r) \approx r^{s-\frac np} \qquad \text{for $r\leq 1$.}$$ Hence, we infer that $u$ is locally H\"older continuous. As a consequence, the above proof of inequality \eqref{14'} directly applies to the function $u$, and does not require any approximation argument.
\\ The proof of inequality \eqref{23's<1} is complete.

It remains to prove the necessity of condition \eqref{convinf} and the optimality of the modulus of continuity $\sigma_s$. To this purpose, assume that $\omega\colon  [0, \infty)\to [0, \infty)$ is a modulus of continuity such that
\begin{equation}\label{july16}
|u(x) - u(y)| \leq c\, \omega (|x-y|) \, |u|_{s,A,\rn} \qquad \text{for $x, y \in \rn$,}
\end{equation}
for some constant $c$, and for every  $u\in V^{s,A}(\rn)$.
Consider trial functions of the form
\begin{equation}\label{july15}
    u(x)= \int_{\omega_n |x|^n}^\infty f(r)\, r^{-1 + \frac sn}\,dr\qquad \hbox{for $x\in \rn$,}
\end{equation}
where the function $f\colon  [0, \infty) \to [0,\infty)$ is non-increasing, bounded, and with bounded support.
Thanks to ~\cite[Inequality (6.15)] {ACPS_emb}, there exists a constant $c=c(n,s)$ such that
\begin{equation}\label{july17}
    |u|_{s,A,\rn}\leq c\, \|f\|_{L^A(0, \infty)}.
\end{equation}
Inequalities \eqref{july16} and \eqref{july17} imply that there exist positive constants $c$ and $c'$ such that
\begin{align}
    \label{july18}
    c \geq \sup_{u\in V^{s,A}(\rn)}\, \sup_{x\in\rn} \frac{|u(x)-u(0)|}{\omega(|x|) \, |u|_{s,A,\rn}} \geq
        c'\sup_{f} \, \sup_{\rho>0}\frac{\int_0^{\omega_n\, \rho^n} f(r)\, r^{-1 +\frac sn} \; dr}{\omega (\rho)\, \|f\|_{L^A(0, \infty)}}
     =
\sup_{\rho>0}\, \sup_{f} \frac{\int_0^{\omega_n\, \rho^n} f(r)\, r^{-1 +\frac sn} \; dr}{\omega (\rho)\, \|f\|_{L^A(0, \infty)}},
    \end{align}
    where $f$ is as above. Any non-increasing function $f\colon  [0, \infty) \to [0,\infty)$ admits a pointwise monotone approximation by a sequence of non-increasing, bounded functions with bounded support. Hence, by the monotone convergence theorem for Lebesgue integrals and the Fatou property of Orlicz norms, the supremum on the rightmost side of equation \eqref{july18} can be extended to all nonnegative{\color{green!70!black!},}
    non-increasing functions $f$. Thanks to the Hardy-Littlewood inequality for rearrangements, the supremum in $f$
    on the rightmost side of equation \eqref{july18} does not increase if it is extended to all (non-necessarily non-increasing)  functions in $L^A(0, \infty)$. Therefore, by
     \eqref{revholder}, it agrees with
    $ \|r^{-1+\frac sn}\|_{L^{\widetilde{A}}(0, \omega_n\, \rho^n)}.$
    As a consequence,
    \begin{align}\label{july21}
        \sup_{\rho>0}\,  \frac{\|r^{-1+\frac sn}\|_{L^{\widetilde{A}}(0, \omega_n\, \rho^n)}}{\omega (\rho) }\leq c.
    \end{align}
On the other hand, since
$$ \|r^{-1+\frac sn}\|_{L^{\widetilde{A}(}0, \omega_n\, \rho^n)} = \inf\bigg\{ \lambda>0: \int_0^{\omega_n\, \rho^n} \widetilde A\Big(\frac{t^{-1+\frac sn}}\lambda\Big)\, dt\leq 1\bigg\},$$
computations analogous to \eqref{6}--\eqref{july20} show that the finiteness of this norm is equivalent to condition \eqref{convinf}, and that
\begin{equation}\label{july19}
    \|r^{-1+\frac sn}\|_{L^{\widetilde{A}}(0, \omega_n\, \rho^n)} \approx  \|r^{-1+\frac sn}\|_{L^{\widetilde{A}}(0, \rho^n)}\approx \vartheta_s(\rho) \qquad \text{for $\rho >0$},
\end{equation}
up to multiplicative constants depending on $n$ and $s$.
Inequalities \eqref{july18} and \eqref{july19} imply that
\begin{equation}\label{20}
   \vartheta_s(\rho)\leq c  \, \omega (\rho)\qquad\hbox{for $\rho>0$},
\end{equation}
 and for some constant $c$. Hence,
$$\mathcal{C}^{\sigma_s (\cdot)}(\rn) =\mathcal{C}^{\vartheta_s (\cdot)}(\rn)\to  \mathcal{C}^{\omega (\cdot)}(\rn).$$
\end{proof}

\subsection{Proof of Theorem \ref{thm:1<s<n}: case $s\in (1,n)$}

Dealing with embeddings of fractional-order Orlicz-Sobolev spaces of order $s>1$ calls for the use of a yet different method.  It requires a sharp combination of precise pieces of information derived from Theorems \ref{CP} and  \ref{a}. Moreover, because of the presence of two addends in the modulus of continuity defining the target space, the proof of its optimality
not only involves radial trial functions as in \eqref{july15}, but also functions of a different form, which depend radially in $n-1$ variables and are just odd in the remaining one. Properties of the relevant family of trial functions are established in the following lemma. The subsequent lemma is also needed to establish the sharpness of the modulus of continuity.

\begin{lemma}\label{lemma1}
Let  $s\in (1,  n+1)\setminus \N$.
Let $f$ be a nonnegative, non-increasing function in $L^A(0, \infty)$ with bounded support. Let $u$ be the function defined by
\begin{align}\label{sep108}
u(x) = x_1\, \int_{\omega_n |x|^n}^\infty \int_{r_1}^\infty \cdots \int_{r_{[s]}}^\infty f(r_{[s]+1})\,  r_{[s]+1} ^{-[s]-1+ \frac{s-1}{n}} \; d r_{[s]+1} \cdots  d r_1 \qquad \hbox{for}\;\; x\in \rn.
\end{align}
Then,
\begin{equation}\label{sep109}
|\nabla^{[s]}  u |_{\{s\}, A, \rn} \leq c \|f\|_{L^A(0, \infty)}\,
\end{equation}
for some constant $c=c(n,s)$.
\end{lemma}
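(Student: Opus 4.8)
The plan is to reduce the bound on $|\nabla^{[s]}u|_{\{s\},A,\rn}$ for the function $u$ in \eqref{sep108} to the already established bound \eqref{july17} in the proof of Theorem \ref{thm:s<1} — or more precisely to its iterated analog for higher-order derivatives. First I would compute $\nabla^{[s]}u$ explicitly. Writing the $([s]+1)$-fold iterated integral as
$$
g(\rho) = \int_\rho^\infty \int_{r_1}^\infty \cdots \int_{r_{[s]}}^\infty f(r_{[s]+1})\, r_{[s]+1}^{-[s]-1+\frac{s-1}{n}}\, dr_{[s]+1}\cdots dr_1 \qquad \text{for $\rho>0$,}
$$
we have $u(x) = x_1\, g(\omega_n|x|^n)$. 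Differentiating $[s]$ times, each derivative either hits the factor $x_1$ (producing a term with one fewer derivative on $g\circ(\omega_n|\cdot|^n)$) or hits the composite $g(\omega_n|x|^n)$, bringing down a factor $\omega_n n |x|^{n-2}x_j\, g'(\omega_n|x|^n) = -\omega_n n |x|^{n-2}x_j \,(g_1\circ(\omega_n|\cdot|^n))$, where $g_1$ is the $[s]$-fold iterated integral of the same integrand. Iterating, every component of $\nabla^{[s]}u$ is a finite linear combination (with coefficients depending only on $n$ and the multi-index, hence on $n$ and $s$) of terms of the form $P(x)\, h_k(\omega_n|x|^n)$, where $P$ is a monomial and $h_k$ is a $k$-fold iterated integral tail of $f(r)r^{-k-1+\frac{s-1}{n}}$ for suitable $k$, the homogeneity always balancing so that each term is a function of the type
$$
|x| \cdot \int_{\omega_n|x|^n}^\infty \int_{r_1}^\infty \cdots \int_{r_{k-1}}^\infty f(r_k)\, r_k^{-k + \frac{s-1}{n}}\, dr_k\cdots dr_1,
$$
or, in the extreme case where all $[s]$ derivatives hit $g$, simply a bounded multiple of a radial function $h_{[s]+1}$ evaluated at $\omega_n|x|^n$ — note $s-1\in(0,1)$ plays the role of the fractional smoothness, so these are exactly the radial trial functions from \eqref{july15} with $s$ replaced by $s-1$, up to the extra polynomial weight of degree one. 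The key point is that this is a finite sum with controlled coefficients, so by the triangle inequality for the seminorm $|\cdot|_{\{s\},A,\rn}$ it suffices to bound each summand.

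For each summand, I would invoke the relevant instance of the estimate behind \eqref{july17}, namely \cite[Inequality (6.15)]{ACPS_emb} and its variant for functions that are a product of $x_1$ (or $|x|$) with a radial integral — these are precisely the trial functions considered in Lemma \ref{lemma1} of the present paper and already analyzed in \cite[Section 6]{ACPS_emb} in the context of proving sharpness of embeddings into Orlicz--Lorentz spaces. Since the iterated integral defining $g$ has the integrand $f(r)r^{-[s]-1+\frac{s-1}{n}}$, performing $[s]$ of the integrations explicitly reduces $g(\omega_n|x|^n)$ (and likewise each $h_k$) to a single remaining integral $\int_{\omega_n|x|^n}^\infty f(r)r^{-1+\frac{s-1}{n}}\,dr$ times a polynomially bounded factor; the exponent $-1+\frac{s-1}{n}$ is exactly the one appearing in \eqref{july15} with smoothness parameter $s-1$. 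Thus each summand is, up to a harmless polynomial weight of degree $\le 1$, a trial function of the type whose fractional seminorm of order $\{s\}=s-[s]$ is controlled by $c\,\|f\|_{L^A(0,\infty)}$ via the cited inequality. Summing the finitely many contributions yields \eqref{sep109}.

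**Main obstacle.**
The main difficulty I anticipate is organizational rather than analytic: carrying out the differentiation of $u(x)=x_1\,g(\omega_n|x|^n)$ exactly $[s]$ times and verifying that \emph{every} resulting term, after performing the elementary inner integrations, collapses to a trial function of the admissible form — with the homogeneity exponents matching up so that the cited estimate from \cite{ACPS_emb} applies verbatim with the shifted smoothness $s-1$ in place of $s$. One must check in particular that the terms in which all $[s]$ derivatives fall on the composite radial part do not degenerate (they become the pure radial trial functions \eqref{july15}, which is fine), and that the presence of the linear prefactor $x_1$ — which makes $u$ odd in $x_1$ rather than radial — is compatible with the range $s\in(1,n+1)\setminus\N$; this is where the restriction to $s>1$ (rather than $s>0$) is used, since one full derivative is needed before the radial structure is recovered. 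A secondary technical point is ensuring that the iterated integrals converge, which follows since $f$ has bounded support and each integrand decays, and that the seminorm of order $\{s\}$ (not $s-1$) is the one that gets estimated: this requires tracking that $[s]$ differentiations of an $([s]+1)$-fold iterated integral with the stated exponent leave exactly the single-integral structure whose fractional $\{s\}$-seminorm is handled by the reference inequality.
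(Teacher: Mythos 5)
There is a genuine gap, and it sits exactly where the real work of this lemma lies. Your computation of $\nabla^{[s]}u$ as a finite linear combination of terms ``monomial in $x$ times an iterated integral tail'' matches the paper's starting point (its formula \eqref{dec240}), and the convergence remarks are fine. But the core step --- bounding the order-$\{s\}$ Gagliardo--Orlicz seminorm of each such term by $c(n,s)\,\|f\|_{L^A(0,\infty)}$ --- is not proved; it is delegated to ``the estimate behind \eqref{july17}, namely \cite[Inequality (6.15)]{ACPS_emb}, and its variant for functions that are a product of $x_1$ (or $|x|$) with a radial integral''. Inequality (6.15) of \cite{ACPS_emb} (and its higher-order analogue, \cite[Lemma 7.6]{ACPS_emb}, which the present paper does cite at \eqref{sep105}) concerns the purely radial trial functions \eqref{july15}/\eqref{fi1} with $f$ non-increasing; no such result is available for the non-radial terms with monomial prefactors $x_{\alpha_1}\cdots x_{\alpha_i}|x|^{jn-[s]+1-i}$, and asserting such a ``variant'' is essentially assuming the lemma you are asked to prove: Lemma \ref{lemma1} exists in this paper precisely because the $x_1$-weighted trial functions are \emph{not} covered by the earlier radial estimates. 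Moreover, the reduction you describe is not literally correct: after Fubini each summand is of the form $P(x)\int_{\omega_n|x|^n}^\infty f(r)\,r^{-j-1+\frac{s-1}{n}}(r-\omega_n|x|^n)^{j}\,dr$, i.e.\ the $x$-dependence also sits \emph{inside} the integral; it is only pointwise dominated by $P(x)$ times a trial function of type \eqref{july15}, and pointwise domination gives no control of a fractional seminorm. Rewriting it as $\int_{\omega_n|x|^n}^\infty \hat f(r)\,r^{-1+\frac{s-1}{n}}dr$ forces an $\hat f$ that need not be non-increasing, so the cited inequality does not apply verbatim either.

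By contrast, the paper proves the modular inequality
\begin{equation*}
\int_{\rn}\int_{\rn} A\Big(\tfrac{|\nabla^{[s]}u(x)-\nabla^{[s]}u(y)|}{|x-y|^{\{s\}}}\Big)\,\tfrac{dx\,dy}{|x-y|^n}\le \int_0^\infty A(c\,f(r))\,dr
\end{equation*}
directly: after symmetrizing, it splits $\rn\times\rn$ into $\{|y|\ge 2|x|\}$ and $\{|x|\le|y|\le 2|x|\}$, derives pointwise bounds such as $|\nabla^{[s]}u(x)|\lesssim |x|^{s-[s]}f(\omega_n|x|^n)$ and, on the near-diagonal region, a Lipschitz-type bound $|\nabla^{[s]}u(x)-\nabla^{[s]}u(y)|\lesssim |x|^{\{s\}-1}|x-y|\,f(\omega_n|x|^n)$ obtained by estimating differences of the monomial prefactors via \cite[Lemma 7.8]{ACPS_emb}, and then closes the argument with changes of variables, Jensen's inequality and the Young-function properties \eqref{incr} and \eqref{kt}. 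None of this near-diagonal difference analysis, nor the handling of the extra term when $[s]=1$, has a counterpart in your proposal, so as written the argument is circular at its decisive step. If you want to salvage the triangle-inequality strategy, you would still have to prove the seminorm bound for each monomial-weighted summand from scratch, which amounts to redoing the paper's two-region estimate term by term.
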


\begin{proof}
Notice the alternative expression
$$
u(x) = \frac {1}{[s]!} \, x_1 \,\int_{\omega_n |x|^n}^\infty f(r)\,  r^{-[s]-1+ \frac{s-1}{n}} \, (r - \omega_n |x|^n)^{[s]} \; dr \qquad \hbox{for}\;\; x\in \rn,$$
which follows via Fubini's theorem and will be used in what follows without explicit mention.
\\
Inequality  \eqref{sep109} will follow if we show that
\begin{equation}\label{150}
\int_{\rn} \int_{\rn} A\left( \frac{ | \nabla ^{[s]} u(x) - \nabla^{[s]} u(y)|}{|x-y|^{\{s\}}}\right) \; \frac{dx \, dy}{|x-y|^n} \leq \int_0^\infty A\left ( c \, f(r)\right)\; dr
\end{equation}
for some constant  $c=c(n,s)$, where the fractional part $\{s\}$ of $s$ obeys:
\begin{align}\label{frac}
    \{s\}= s-[s].
\end{align}
One can verify that any ${[s]}-$th order partial derivative of $u$ is a linear combination of terms of the form
\begin{equation}\label{dec240}
x_{\alpha_1} \cdots x_{\alpha_i} \, |x|^{j n -{[s]} +1 -i} \, \int_{\omega_n |x|^n}^\infty \int_{r_{j+1}}^\infty \cdots
\int_{r_{[s]}}^\infty f(r_{{[s]}+1})\, r_{{[s]}+1}^{ -{[s]} -1 + \frac {s-1}n}\; d r_{{[s]}+1} \cdots  d r_{j+1}\quad \text{for a.e. $x \in \rn$,}
\end{equation}
where $i=0, 1, \dots , {[s]}+1$, $j=1, \dots , {[s]}$, and  $\alpha_i \in\{1, \dots , n\}$. Also, if ${[s]}=1$,  the values $i=j=0$ have to be included  as well.
\\
To estimate the left-hand side of inequality \eqref{150}, we observe that, by symmetry, we may restrict the  region of integration to the set $\{(x,y)\in \rn\times \rn: |x|\leq |y|\}$, and we split the latter into
 the subsets
 $\{|y| \geq 2 |x|\}$ and $\{2|x| > |y| \geq |x|\}$.
\\
\emph{Part 1}: Estimate over  $\{|y| \geq 2 |x|\}$.
Let us first assume that  ${[s]}>1$, and hence, $j \geq 1$ in  \eqref{dec240}. Trivially,
\begin{equation*}
|\nabla^{[s]} u(x) - \nabla^{[s]} u(y)| \leq |\nabla^{[s]} u(x)| + |\nabla^{[s]} u(y)| \qquad \text{for a.e. $x, y \in \rn$.}
\end{equation*}
We shall provide a bound for $|\nabla^{[s]} u(x)|$,  the bound for $|\nabla^{[s]} u(y)|$ being analogous, with $x$ replaced with $y$.
One has that
\begin{align*}
&|\nabla^{[s]} u(x)|  \lesssim \sum_{j=1}^{[s]} |x|^{j n -{[s]} +1} \int_{\omega_n |x|^n}^\infty \int_{r_{j+1}}^\infty \cdots \int_{r_{[s]}}^\infty f(r_{{[s]}+1})\,  r_{{[s]}+1} ^{-{[s]}-1+ \frac{s-1}n}
\; d r_{{[s]}+1} \cdots  d r_{j+1}
\\
 & \lesssim
\sum_{j=1}^{[s]} |x|^{j n -{[s]} +1} \int_{\omega_n |x|^n}^\infty f(r)\,  r^{-\textcolor{blue}{j}-1+ \frac{s-1}n} \; dr
  \lesssim \sum_{j=1}^{[s]} |x|^{j n -{[s]} +1} \, |x|^{-jn +s-1} \, f(\omega_n |x|^n)\lesssim |x|^{s-{[s]}} \, f(\omega_n |x|^n)\,, \nonumber
\end{align*}
for a.e. $x\in \rn$,
where the third inequality is due to the fact that  $f$ is non-increasing and $-j-1 + \frac{s-1}{[s]} < -1$ since $j\geq 1$. Here, and throughout this proof, the constant in the relation $\lq\lq \lesssim "$  depends on $n$ and $s$.
\\ Thus,
\begin{align*}
\int_{\Rn} &\int_{|y|\geq 2|x|} A\left(\frac{|\nabla^{[s]} u(x) - \nabla^{[s]} u(y)|}{|x-y|^{\{s\}}}\right) \frac{\,dx \,dy}{|x-y|^n}\\
&\lesssim
\int_{\Rn} \int_{|y| \geq 2|x|} A \left( c |x|^{\{s\}} |y|^{-\{s\}} f(\omega_n|x|^n) \right) \frac{dy}{|y|^n}\,dx
+ \int_{\Rn} \int_{|y| \geq 2|x|} A \left(c f(\omega_n|y|^n) \right) \,dx\frac{dy}{|y|^n}\\
&\lesssim \int_{\Rn} \int_{2|x|}^\infty A \left( c' |x|^{\{s\}} r^{-\{s\}} f(\omega_n|x|^n) \right) \frac{dr}{r}\,dx
+ \int_{\Rn} A \left( c' f(\omega_n|y|^n) \right) \,dy
\end{align*}
for some constants $c$ and $c'$ depending on $n$ and $s$. Notice that, in the last inequality, we have also made use of property \eqref{kt}.
The change of variables $t=c' |x|^{\{s\}} r^{-\{s\}} f(\omega_n|x|^n)$ yields
$$
\int_{2|x|}^\infty A \left( c' |x|^{\{s\}} r^{-\{s\}}  f(\omega_n|x|^n) \right) \frac{dr}{r}\,dx
\approx \int_0^{c' f(\omega_n |x|^n)} A(t) \frac{dt}{t}
\leq A\left(c' f(\omega_n |x|^n)\right),
$$
where the last inequality holds thanks to property \eqref{incr}.
Therefore,
\begin{align}\label{nov209}
\int_{\Rn} \int_{|y|\geq 2|x|} A\left(\frac{|\nabla^{[s]} u(x) - \nabla^{[s]} u(y)|}{|x-y|^{\{s\}}}\right) \frac{\,dx \,dy}{|x-y|^n}
\lesssim \int_{\Rn} A \left(c f(\omega_n|x|^n) \right) \,dx
= \int_0^\infty A \left(c f(r) \right) \,dr
\end{align}
for some constant $c=c(n,s)$.
\\
 Assume next that ${[s]}=1$. Besides the terms in \eqref{dec240} with $j=1$, which can be estimated as above,
we have now to estimate also the term with ${[s]}=1$, $i=j=0$. Altogether, we obtain that
\begin{align*}
\int_{\R^n}\int_{|y|\ge2|x|} &
	A\left(\frac{|\nabla u(x)-\nabla u(y)|}{|x-y|^{s-1}}\right)\frac{dx\,dy}{|x-y|^n}\\ &  \lesssim
\int_{\R^n}\int_{\{|y|\ge2|x|\}}
			A\left(\frac{c\int_{\omega_n|x|^n}^{\omega_n|y|^n}f(r)r^{-1+\frac{s-1}{n}}\,dr}{|y|^{s-1}}\right)\frac{dy}{|y|^n}\,dx+ \int_0^\infty A \left(c f(r) \right) \,dr
\end{align*}
for some constant $c=c(n,s)$.
\\
The following chain holds:
\begin{align}\label{nov200}
			\int_{\R^n}\int_{\{|y|\ge2|x|\}}&
			A\left(\frac{c\int_{\omega_n|x|^n}^{\omega_n|y|^n}f(r)r^{-1+\frac{s-1}{n}}\,dr}{|y|^{s-1}}\right)\frac{dy}{|y|^n}\,dx
				\\  \nonumber
		& \lesssim \int_{\R^n}\int_{\{|y|\ge2|x|\}}
			\int_{\omega_n|x|^n}^{\omega_n|y|^n}A\left(c'f(r)\right)r^{-1+\frac{s-1}{n}}\,dr\frac{dy}{|y|^{n+s-1}}\,dx
				\\  \nonumber
		&
\leq \int_{0}^{\infty}A\left(c'f(r)\right)r^{-1+\frac{s-1}{n}}
				\int_{\{\omega_n|x|^n<r\}}\int_{\{|y|\ge2|x|\}}\frac{dy}{|y|^{n+s-1}}\,dx\,dr
				\\  \nonumber
		& \lesssim \int_{0}^{\infty}A\left(c'f(r)\right)r^{-1+\frac{s-1}{n}}
				\int_{\{\omega_n|x|^n<r\}}|x|^{-s+1}\,dx\,dr
				\\  \nonumber
		& \lesssim \int_{0}^{\infty}A\left(c'f(r)\right)r^{-1+\frac{s-1}{n}}r^{1-\frac{s-1}{n}}
				\,dr
				 \approx \int_{0}^{\infty}A\left(c'f(r)\right)\,dr
\end{align}
for some positive constants $c$ and $c'$ depending on $n$ and $s$.
In particular, the first inequality relies upon Jensen's inequality. Hence, inequality \eqref{nov209} holds also for ${[s]}=1$.

\medskip

\noindent
\emph{Part 2}: Estimate over $\{2|x| \geq |y| \geq |x|\}$.
First, assume that ${[s]}>1$ and hence $j\geq 1$ in \eqref{dec240}. Thereby,
\begin{align*}
&|\nabla^{[s]} u(x) - \nabla^{[s]} u(y)|
\cr &
\lesssim \sum_{i=0}^{{[s]}+1} \sum_{(\alpha_1, \dots, \alpha_i)} \sum_{j=1}^{[s]} \bigg |x_{\alpha_1} \cdots  x_{\alpha_i} \, |x|^{jn -{[s]} +1-i} \, \int_{\omega_n |x|^n}^\infty \int_{r_{j+1}}^\infty \cdots
\int_{r_{[s]}}^\infty f(r_{{[s]}+1})\, r_{{[s]}+1}^{ -{[s]} -1 + \frac {s-1}n}\; d r_{{[s]}+1} \cdots  d r_{j+1} \nonumber
\cr& \quad \quad \quad \quad \quad \quad\quad \quad \quad
- y_{\alpha_1} \cdots y_{\alpha_i} \, |y|^{jn -{[s]} +1-i} \, \int_{\omega_n |y|^n}^\infty \int_{r_{j+1}}^\infty \cdots
\int_{r_{[s]}}^\infty f(r_{{[s]}+1})\, r_{{[s]}+1}^{ -{[s]} -1 + \frac {s-1}n}\; d r_{{[s]}+1} \cdots  d r_{j+1}\bigg | \nonumber
\cr &
\lesssim \sum_{i=0}^{{[s]}+1} \sum_{(\alpha_1, \dots, \alpha_i)} \sum_{j=1}^{[s]} \bigg |x_{\alpha_1} \cdots  x_{\alpha_i} \, |x|^{jn -{[s]} +1-i} - y_{\alpha_1} \cdots y_{\alpha_i} \, |y|^{jn -{[s]} +1-i}\bigg| \nonumber
\cr &  \quad \quad \quad \quad \quad \quad\quad \quad \quad
\times \,
\int_{\omega_n |y|^n}^\infty \int_{r_{j+1}}^\infty \cdots
\int_{r_{[s]}}^\infty f(r_{{[s]}+1})\, r_{{[s]}+1}^{ -{[s]} -1 + \frac {s-1}n}\; d r_{{[s]}+1} \cdots  d r_{j+1}\nonumber
\cr &
+  \sum_{i=0}^{{[s]}+1} \sum_{(\alpha_1, \dots, \alpha_i)} \sum_{j=1}^{[s]} \bigg |x_{\alpha_1} \cdots  x_{\alpha_i} \, |x|^{jn -{[s]} +1-i}\, \int_{\omega_n |x|^n}^{\omega_n |y|^n} \int_{r_{j+1}}^\infty \cdots
\int_{r_{[s]}}^\infty f(r_{{[s]}+1})\, r_{{[s]}+1}^{ -{[s]} -1 + \frac {s-1}n}\; d r_{{[s]}+1} \cdots  d r_{j+1}\bigg| \nonumber
\cr &
 = I + II
\end{align*}
for a.e. $x, y$ such that $2|x| \geq |y| \geq |x|$.
By \cite[Lemma 7.8]{ACPS_emb} (which still holds if the assumption $i\leq n$ appearing  in the statement is dropped), we deduce that, for the same $x$ and $y$,
\begin{align*}
I &\lesssim \sum_{j=1}^{[s]} |x-y|\, |x|^{jn-{[s]}}\; \int_{\omega_n |y|^n}^\infty \int_{r_{j+1}}^\infty \cdots
\int_{r_{[s]}}^\infty f(r_{{[s]}+1})\, r_{{[s]}+1}^{ -{[s]} -1 + \frac {s-1}n}\; d r_{{[s]}+1} \cdots  d r_{j+1}
\cr &
\lesssim \sum_{j=1}^{[s]} |x-y|\, |x|^{jn-{[s]}}\; \int_{\omega_n |y|^n}^\infty f(r)\, r^{ -j-1 + \frac {s-1}n}\; d r
\leq \sum_{j=1}^{[s]} |x-y|\, |x|^{jn-{[s]}}\; \int_{\omega_n |x|^n}^\infty f(r)\, r^{ -j -1 + \frac {s-1}n}\; d r \nonumber
\cr &
\lesssim \sum_{j=1}^{[s]} |x-y|\, |x|^{jn-{[s]}}\;  f(\omega_n |x|^n)\, |x|^{ -jn  + s-1}
\lesssim  |x|^{\{s\} -1} |x-y|\,  f(\omega_n |x|^n), \nonumber
\end{align*}
where the third inequality holds since $|y|\geq |x|$, and the fourth since $-j-1+ \frac {s-1}n < -1$.
\\
On the other hand,
\begin{align*}
II &\lesssim \sum_{j=1}^{{[s]}-1} |x|^{jn -{[s]} +1} \, \int_{\omega_n |x|^n}^{\omega_n |y|^n} \int_r^{\infty} f(\varrho) \,  \varrho^{-j-2+\frac{s-1}n} \, d\varrho\, dr + |x|^{{[s]n}- {[s]} +1}
\int_{\omega_n |x|^n}^{\omega_n |y|^n}f(r) \,  r^{-{[s]}-1+\frac{s-1}n} \,  dr
\\
\nonumber &
\lesssim \sum_{j=1}^{{[s]}-1} |x|^{jn -{[s]} +1} \,f(\omega_n |x|^n)\, |x|^{-jn -n +s -1}(|y|^n -|x|^n) + |x|^{{[s]n} -{[s]} +1}\,f(\omega_n |x|^n)\,|x|^{-mn-n+s-1} (|y|^n -|x|^n)
\\
\nonumber&
\lesssim  |x|^{\{s\} -n}\,f(\omega_n |x|^n)\,|x|^{n-1}\, |y-x| \, + |x|^{\{s\} -n }\,f(\omega_n |x|^n)
|x|^{n-1}\, |y-x|
\cr &
\lesssim
|x|^{\{s\} -1}\,|y-x| \,f(\omega_n |x|^n)
\end{align*}
for a.e. $x, y$ such that $2|x| \geq |y| \geq |x|$.
If ${[s]}=1$, then the terms in the expression $\nabla u(x) - \nabla u(y)$ corresponding to $j \geq 1$ can be estimated as above, whereas
 the term   corresponding to  $i=j=0$ in \eqref{dec240} can be estimated, in absolute value, by
\begin{align*}
\int_{\omega_n |x|^n}^{\omega_n |y|^n} \int_r^{\infty} f(\varrho) \,  \varrho^{-2+\frac{s-1}n} \, d\varrho\, dr &
\lesssim
f(\omega_n |x|^n)\, \int_{\omega_n |x|^n}^{\omega_n |y|^n}
r^{-1 +\frac{s-1}n}\;dr
\cr &
\lesssim
f(\omega_n |x|^n)\, |x|^{-n+s-1}(|y|^{n}- |x|^{n})
\lesssim
|y-x|\,|x|^{s-2} \, f(\omega_n |x|^n)\,.
\end{align*}
Therefore, for every ${[s]}\in \{1,\dots, n\}$,
\begin{align*}
\int_{\Rn} &\int_{|x| \leq |y| \leq 2|x|} A\left(\frac{|\nabla^{[s]} u(x) - \nabla^{[s]} u(y)|}{|x-y|^{\{s\}}}\right) \frac{\,dx \,dy}{|x-y|^n}\\
&\lesssim  \int_{\Rn} \int_{|x| \leq |y| \leq 2|x|} A\left(c|x|^{\{s\}-1}|y-x|^{ 1-\{s\}}f(\omega_n |x|^n)\right)\frac{dy}{|y-x|^n}\,dx\\
&= \int_{\Rn} \int_{|x| \leq |z+x| \leq 2|x|} A\left(c|x|^{\{s\}-1}|z|^{1-\{s\}}f(\omega_n |x|^n)\right)\frac{dz}{|z|^n}\,dx\\
&\leq\int_{\Rn} \int_{|z|\leq 3|x|} A\left(c|x|^{\{s\}-1}|z|^{1-\{s\}}f(\omega_n |x|^n)\right)\frac{dz}{|z|^n}\,dx\\
&\lesssim \int_{\Rn} \int_{0}^{3|x|} A\left(c|x|^{\{s\}-1} r^{1-\{s\}}f(\omega_n |x|^n)\right)\frac{dr}{r}\,dx
\end{align*}
for some positive constant $c=c(n,s)$.
The change of variables $t=c|x|^{\{s\}-1} r^{1-\{s\}}f(\omega_n |x|^n)$ yields
$$
\int_{0}^{3|x|} A\left(c|x|^{\{s\}-1}r^{ 1-\{s\}}f(\omega_n |x|^n)\right)\frac{dr}{r}
\approx \int_0^{c f(\omega_n |x|^n)} A(t) \frac{\,dt}{t}
\leq A(c f(\omega_n |x|^n))
$$
for $x\in \rn$.
Thereby,
\begin{align}\label{nov210}
\int_{\Rn} \int_{|x|\leq|y|\leq 2|x|} A\left(\frac{|\nabla^{[s]} u(x) - \nabla^{[s]} u(y)|}{ |x-y|^{\{s\}}}\right) \frac{\,dx \,dy}{|x-y|^n}
\lesssim \int_{\Rn} A \left( c f(\omega_n|x|^n) \right) \,dx
= \int_0^\infty A \left(c f(r) \right) \,dr.
\end{align}
Inequality \eqref{150} follows from inequalities \eqref{nov209} and \eqref{nov210}.
\end{proof}

\medskip

\begin{lemma}\label{lemma2}
Let $s>1$ and let $A$ be a Young function and $R>0$. Let $g$ be a nonnegative function in $L^A(0, \infty)$, vanishing on $(0, R)$, and
 non-increasing in $(R, \infty)$. Then, there exists a  nonnegative  non-increasing function $f\in L^A(0, \infty)$ such that
\begin{equation*}
\|f\|_{L^A(0, \infty)}\leq c \|g\|_{L^A(0, \infty)}
\end{equation*}
for some  absolute constant $c$, and
\begin{equation*}
\int_R^\infty g(r) \, r^{-1 - \frac{s-1}n} \; dr \approx \int_R^\infty f(r) \, r^{-1 - \frac{s-1}n} \; dr\,,
\end{equation*}
up to multiplicative constants depending on $n$ and $s$.
\end{lemma}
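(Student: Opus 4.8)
The plan is to produce $f$ by replacing $g$ on the initial interval $(0,2R)$ with a single well-chosen constant, namely the average
\[
\overline{g}=\frac{1}{R}\int_{R}^{2R}g(\sigma)\,d\sigma ,
\]
and leaving $g$ untouched on $(2R,\infty)$; that is,
\[
f(\rho)=\overline{g}\,\chi_{(0,2R)}(\rho)+g(\rho)\,\chi_{(2R,\infty)}(\rho)\qquad\text{for }\rho>0 .
\]
Since $g$ is non-increasing on $(R,\infty)$, one has $\overline{g}\ge g(2R)\ge g(\rho)$ for every $\rho\ge 2R$, so $f$ is nonnegative and non-increasing on $(0,\infty)$: it is constant on $(0,2R)$ and then coincides with the non-increasing tail of $g$, which nowhere exceeds $\overline{g}$.

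For the norm bound I would argue as follows. Assume $\lambda:=\|g\|_{L^{A}(0,\infty)}\in(0,\infty)$, the remaining cases being trivial, so that $\int_{0}^{\infty}A(g/\lambda)\le 1$. Estimating $\int_{0}^{\infty}A(f/(3\lambda))\,d\rho$ by splitting at $\rho=2R$, property \eqref{kt} gives $A(g/(3\lambda))\le\frac13 A(g/\lambda)$ on $(2R,\infty)$, while on $(0,2R)$ the same property together with Jensen's inequality (applied to the convex function $A$ and the probability measure $d\sigma/R$ on $(R,2R)$) yields
\[
2R\,A\!\Big(\frac{\overline{g}}{3\lambda}\Big)\le\frac{2R}{3}\,A\!\Big(\frac{\overline{g}}{\lambda}\Big)\le\frac{2R}{3}\cdot\frac{1}{R}\int_{R}^{2R}A\!\Big(\frac{g(\sigma)}{\lambda}\Big)\,d\sigma=\frac23\int_{R}^{2R}A\!\Big(\frac{g}{\lambda}\Big)\,d\sigma .
\]
Adding the two contributions gives $\int_{0}^{\infty}A(f/(3\lambda))\le\int_{0}^{\infty}A(g/\lambda)\le 1$, hence $\|f\|_{L^{A}(0,\infty)}\le 3\|g\|_{L^{A}(0,\infty)}$, with the absolute constant $c=3$.

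It then remains to compare the weighted integrals. As $f=g$ on $(2R,\infty)$, it suffices to match the contribution over $(R,2R)$, where $\rho^{-1-\frac{s-1}n}\approx R^{-1-\frac{s-1}n}$ with constants depending only on $n$ and $s$. On the one hand, $\int_{R}^{2R}f(\rho)\,\rho^{-1-\frac{s-1}n}\,d\rho=\overline{g}\int_{R}^{2R}\rho^{-1-\frac{s-1}n}\,d\rho\approx\overline{g}\,R^{-\frac{s-1}n}=R^{-1-\frac{s-1}n}\int_{R}^{2R}g(\sigma)\,d\sigma$, and on the other hand $\int_{R}^{2R}g(\sigma)\,\sigma^{-1-\frac{s-1}n}\,d\sigma\approx R^{-1-\frac{s-1}n}\int_{R}^{2R}g(\sigma)\,d\sigma$ by the same elementary bound on the weight. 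Therefore $\int_{R}^{2R}f\,\rho^{-1-\frac{s-1}n}\,d\rho\approx\int_{R}^{2R}g\,\rho^{-1-\frac{s-1}n}\,d\rho$, and adding the common tail integral over $(2R,\infty)$ gives the claimed equivalence on $(R,\infty)$. The one delicate point is the monotonicity of $f$ across the junction $\rho=2R$: this is exactly what forces the averaging interval to extend all the way up to $2R$ (so that $\overline g$ dominates the entire tail of $g$), and it is also why one cannot simply extend $g$ by its value at $R^{+}$, which would in general destroy the norm estimate — the averaging, combined with the convexity of $A$, is precisely the device that keeps $\|f\|_{L^A}$ under control.
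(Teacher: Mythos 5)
Your construction coincides with the paper's: the same function $f$, equal to the average $\frac1R\int_R^{2R}g(\sigma)\,d\sigma$ on $(0,2R)$ and to $g$ on $(2R,\infty)$, the same monotonicity observation (the average dominates $g(2R)$ and hence the whole tail), and the same mechanism for the weighted integrals, namely that the weight $\rho^{-1-\frac{s-1}{n}}$ is comparable to $R^{-1-\frac{s-1}{n}}$ on $(R,2R)$ with constants depending only on $n$ and $s$, while $f=g$ beyond $2R$. Where you genuinely diverge is the norm estimate. The paper introduces the intermediate function $g_R$ (averaged on $(R,2R)$ only, equal to $g$ elsewhere), obtains $\|g_R\|_{L^A(0,\infty)}\le\|g\|_{L^A(0,\infty)}$ by Jensen's inequality, and then controls $f$ through the rearrangement inequality $f^*(r)\le g_R^*(r/2)$ combined with the boundedness of the dilation operator $\mathcal E_{1/2}$ on $L^A(0,\infty)$; you instead run a direct modular estimate, applying Jensen on $(R,2R)$ and property \eqref{kt} after splitting at $2R$, which gives $\int_0^\infty A\bigl(f/(3\lambda)\bigr)\,d\rho\le 1$ for $\lambda=\|g\|_{L^A(0,\infty)}$ and hence the explicit absolute constant $c=3$. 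Both arguments are correct: the paper's rearrangement-plus-dilation route is the one that would carry over verbatim to an arbitrary rearrangement-invariant norm in place of the Luxemburg norm, whereas yours is more elementary and self-contained (no decreasing rearrangements, no dilation operator) and yields an explicit constant. Your closing remark about why the averaging must reach up to $2R$ is also the right explanation of the one delicate point in the construction.
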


\begin{proof}
Consider the function $g_R\colon  (0, \infty) \to [0, \infty)$ defined by
\begin{equation*}
g_R(r) =
\begin{cases}
 \frac 1R \,\int_R^{2R} g(\rho) \; d\rho &\quad \hbox{if $r\in (R, 2R)$}
\cr
g(r) &\quad \hbox{otherwise.}
\end{cases}
\end{equation*}
One can verify, via Jensen's inequality, that
\begin{equation*}
\|g_R\|_{L^A(0, \infty)}\leq \|g\|_{L^A(0, \infty)}.
\end{equation*}
Moreover,
\begin{equation*}
\int_R^{2R} g(r) \, r^{-1 - \frac{s-1}n} \; dr \approx \int_R^{2R} g_R(r) \, r^{-1 - \frac{s-1}n} \; dr,
\end{equation*}
with equivalence constants depending on $n$ and $s$.
Consequently,
\begin{equation*}
\int_R^\infty g(r) \, r^{-1 - \frac{s-1}n} \; dr \approx \int_R^\infty g_R(r) \, r^{-1 - \frac{s-1}n} \; dr\,.
\end{equation*}
Now, define the function $f\colon  (0, \infty) \to [0, \infty)$ as
\begin{equation*}
f(r) =
\begin{cases}
\frac 1R \int_R^{2R} g(\rho) \; d\rho &\quad \hbox{if $r\in(0, 2R)$}
\cr
g(r) &\quad \hbox{if $r\in [2R, \infty)$.}
\end{cases}
\end{equation*}
Thanks to the monotonicity of $g$, the function $f$ is non-increasing on $(0, \infty)$.
\\
Since $f= g_R$ in $(R, \infty)$,  we have that
\begin{equation*}
\int_R^\infty f(r) \, r^{-1 - \frac{s-1}n} \; dr \approx \int_R^\infty g(r) \, r^{-1 - \frac{s-1}n} \; dr,
\end{equation*}
with equivalence constants depending on $n$ and $s$.
Furthermore, $f^* (r) \leq g^*_R\left(\frac r2 \right)$ for $r >0$. Hence, thanks to the boundedness of the dilation operator on $L^A(0,\infty)$,  with a norm bounded by an absolute constant, one has that
\begin{equation*}
\|f\|_{L^A(0, \infty)} \leq c \|g_R\|_{L^A(0, \infty)} \leq c
\|g\|_{L^A(0, \infty)}
\end{equation*}
for some absolute constant $c$.
\end{proof}

We are now in a position to accomplish the proof of Theorem \ref{thm:1<s<n}.

\begin{proof}[Proof of Theorem \ref{thm:1<s<n}]
Our approach to
 embedding \eqref{231<s<n} relies upon a combination of an embedding for fractional  Orlicz-Sobolev spaces of order $s-1$ with an embedding of first-order Sobolev type spaces into spaces of uniformly continuous functions.
 \\ Unless otherwise stated, the constants involved in this proof depend on $n$, $s$ and $A$.
 \\
 To begin with, observe that, if $u\in V^{s,A}_{d,1}(\rn)$, then  $\nabla u \in V^{s-1, A}_{d}(\rn)$, and
\begin{equation}\label{29}
    \big|\nabla^{[s]-1} (\nabla u)\big|_{\{s\}, A, \rn} = \big|\nabla ^{[s]} u\big|_{\{s\}, A, \rn}.
\end{equation}
First, assume that condition \eqref{july30} is fulfilled.
Then, Theorem \ref{a} yields
\begin{align}\label{30}
   V^{s-1, A}_{d}(\rn) \to
      L(\widehat{A}_{\frac n{s-1}}, \tfrac n{s-1})(\rn).
\end{align}
Equations \eqref{29} and \eqref{30} tell us that
\begin{equation}\label{31}
 \big\|\nabla u\big\|_{ L\big(\widehat{A}_{\frac n{s-1}}, \tfrac n{s-1}\big)(\rn)} \leq c \, \big |\nabla^{[s]} u\big|_{\{s\}, A, \rn}
\end{equation}
for some constant $c$ and for $u\in V^{s,A}_{d,1}(\rn)$.
By Theorem  \ref{CP},
\begin{equation}\label{32}
   V^1   L\big(\widehat{A}_{\frac n{s-1}}, \tfrac n{s-1}\big)(\rn) \to \mathcal{C}^{\nu_s (\cdot)} (\rn),
\end{equation}
namely
\begin{equation}\label{32norm}
   |u|_{\mathcal{C}^{\nu_s (\cdot)} (\rn)}\leq c \, \|\nabla u\|_{L\big(\widehat{A}_{\frac n{s-1}}, \tfrac n{s-1}\big)(\rn)}
\end{equation}
for some constant $c$ and for $u\in  V^1   L\big(\widehat{A}_{\frac n{s-1}}, \tfrac n{s-1}\big)(\rn)$,
where $\nu_s \colon  [0, \infty) \to [0, \infty)$ is the function given by
\begin{equation}\label{33}
   \nu_s(r)= \Big \| \rho^{-1+ \frac 1n}\, \chi_{(0, r^n)}(\rho) \Big\|_{L\big(\widehat{A}_{\frac n{s-1}}, \frac n{s-1}\big)'(\rn) } \qquad \text{for $r>0$}.
\end{equation}
Altogether,
\begin{equation}\label{july50}
   |u|_{\mathcal{C}^{\nu_s (\cdot)} (\rn)}\leq c  \, \big |\nabla^{[s]} u\big|_{\{s\}, A, \rn}
   \end{equation}
for some constant $c$ and for $u\in V^{s,A}_{d,1}(\rn)$.
\\
By  \cite[Lemma 4.5]{cianchi_MZ},
\begin{align}\label{34}
   &\Big \| \rho^{-1+ \frac 1n}\, \chi_{(0, r^n)}(\rho) \Big\|_{L\big(\widehat{A}_{\frac n{s-1}}, \frac n{s-1}\big)'(0, \infty) } \approx
    \Big \| \rho^{ \frac {s-1}n}\, \left((\cdot)^{-1+ \frac 1n}\, \chi_{(0, r^n)}(\cdot) \right)^{**} (\rho) \Big\|_{L^{\widetilde{A}}(0, \infty)}
    \qquad \text{for $r> 0$,}
    \end{align}
    with equivalence constants depending on $n$ and $s$. Moreover, one can verify that
    \begin{align}\label{july57}
     \Big \|& \rho^{ \frac {s-1}n}\, \left((\cdot)^{-1+ \frac 1n}\, \chi_{(0, r^n)}(\cdot) \right)^{**} (\rho) \Big\|_{L^{\widetilde{A}}(0, \infty)} \\ &
    \approx \Big \| \rho^{-1+ \frac sn}\, \chi_{(0, r^n)}(\rho) \Big\|_{L^{\widetilde{A}}(0, \infty) }  + r\, \Big \| \rho^{-1+ \frac {s-1}n}\, \chi_{(r^n, \infty)}(\rho) \Big\|_{L^{\widetilde{A}}(0, \infty) }\nonumber
      \qquad \text{for $r> 0$,}
\end{align}
with equivalence constants depending on $n$ and $s$.
 Computations show that
 \begin{align}\label{july34}
     r\, \Big \| \rho^{-1+ \frac {s-1}n}\, \chi_{(r^n, \infty)}(\rho) \Big\|_{L^{\widetilde{A}}(0, \infty) } \approx \varrho_s(r) \qquad \text{for $r>0$,}
 \end{align}
 up to multiplicative constants depending on $n$ and $s$, see e.g. \cite[Equation (6.24)]{CiRa}.
 Hence, by equation \eqref{july19},
 \begin{equation}\label{sep10}
     \nu_s(r) \approx \vartheta_s(r) + \varrho_s(r) \qquad \text{for $r>0$.}
 \end{equation}
 Combining equations \eqref{july50} and \eqref{sep10}  yields inequality \eqref{23'1<s<n}.
 \\ Next, assume that
 condition \eqref{july30} fails, namely \eqref{24} is in force. Under this assumption, Theorem \ref{a} tells us that
\begin{equation}\label{july31}
  V^{s-1, A}_{d}(\rn) \to
      \big(L^\infty \cap L(\widehat{A}_{\frac n{s-1}}, \tfrac n{s-1})\big)(\rn).
\end{equation}
 Embedding \eqref{july31} amounts to the inequality
 \begin{equation}\label{july54}
   \big \|\nabla u\big\|_{ \big(L^\infty \cap L(\widehat{A}_{\frac n{s-1}}, \tfrac n{s-1})\big)(\rn)}  \leq c\, \big|\nabla ^{[s]} u\big|_{\{s\}, A, \rn}
 \end{equation}
 for some constant $c$ and for  $u\in V^{s,A}_{d,1}(\rn)$.
 By Theorem \ref{CP} again,
 \begin{equation}\label{36}
  V^1   \big(L^\infty \cap L(\widehat{A}_{\frac n{s-1}}, \tfrac n{s-1}\big)(\rn) \to \mathcal{C}^{\mu_s (\cdot)} (\rn),
 \end{equation}
 where
 \begin{equation}\label{37}
 \mu_s(r)=\Big \| \rho^{-1+ \frac 1n}\, \chi_{(0, r^n)}(\rho) \Big\|_{\big(L^\infty \cap L(\widehat{A}_{\frac n{s-1}}, \frac n{s-1})\big)'(0, \infty) } \qquad \text{for $r >0$.}
 \end{equation}
 Thus,  there exists a constant $c$ such that
\begin{equation}\label{july58}
   |u|_{\mathcal{C}^{\mu_s (\cdot)} (\rn)}\leq c \, \|\nabla u\|_{\big(L^\infty \cap L(\widehat{A}_{\frac n{s-1}}, \tfrac n{s-1})\big)(\rn)}
\end{equation}
for $u \in   \big(L^\infty \cap L(\widehat{A}_{\frac n{s-1}}, \tfrac n{s-1})\big)(\rn)$.
By formula \eqref{associate-of-cap},
\begin{equation}\label{38}
   \big(L^\infty \cap L(\widehat{A}_{\frac n{s-1}}, \tfrac n{s-1})\big)'(0, \infty) = \big(L^\infty\big)' (0, \infty) +L\big(\widehat{A}_{\frac n{s-1}}, \tfrac n{s-1}\big)'(0, \infty),
\end{equation}
up to equivalent norms.
Thus, by equation \eqref{33},
\begin{align}\label{39}
   \mu_s(r ) & =\Big \| \rho^{-1+ \frac 1n}\, \chi_{(0, r^n)}(\rho) \Big\|_{\big(L^\infty \cap L(\widehat{A}_{\frac n{s-1}}, \frac n{s-1})\big)'(0, \infty) }
  \\ &
  = \inf_{f+g=\rho^{-1+ \frac 1n}\, \chi_{(0, r^n)}(\rho)} \left(\|f\|_{L^1(0, \infty)} + \|g\|_{L\big(\widehat{A}_{\frac n{s-1}}, \frac n{s-1}\big)'(0, \infty)}\right)\nonumber
    \\ &
    \leq \min\Big\{\Big\| \rho^{-1+ \frac 1n}\, \chi_{(0, r^n)}(\rho) \Big\|_{L^1(0, \infty)} , \Big\| \rho^{-1+ \frac 1n}\, \chi_{(0, r^n)}(\rho) \Big\|_{L\big(\widehat{A}_{\frac n{s-1}}, \frac n{s-1}\big)'(0, \infty)}\Big\} \nonumber
    \\ &
    \approx \min\{r, \nu_s(r)\}\nonumber
\end{align}
for $r > 0$.
We claim that
\begin{equation}\label{40}
    \min \{r, \nu_s(r)\} \approx
    \begin{cases}
r \quad & \hbox{if} \;\; 0<r < 1
\\
\nu_s(r) \quad &   \hbox{if}\;\;  r \geq 1 .
    \end{cases}
\end{equation}
 Owing to equation \eqref{sep10}, the inequality
\begin{equation}\label{41}
    \nu_s(r) \lesssim r \qquad \hbox{for}\;\; r\geq 1
\end{equation}
will follow if we show that
\begin{equation}\label{sep11}
    \liminf_{r\to \infty} \frac r{\vartheta_s(r)} >0
\end{equation}
and
\begin{equation}\label{sep12}
    \liminf_{r\to \infty} \frac r{\varrho_s(r)} >0.
\end{equation}
By the second equivalence in \eqref{july20}, condition \eqref{sep11} amounts  to
\begin{equation}\label{42}
  \liminf_{t\to \infty} t^{n-(s-1) }\, \EE^{-1} (t^{-n} )>0.
  \end{equation}
Analogously, condition \eqref{sep12} is equivalent to
\begin{equation}\label{42'}
  \liminf_{t\to \infty} t^{n-(s-1) }\, \FF^{-1} (t^{-n}) >0 .
  \end{equation}
We have that
 \begin{align*}
  &\liminf_{t\to \infty} t^{n-(s-1) }\, \EE^{-1} (t^{-n} )
  >0 \qquad\text{if and only if}\qquad
  \liminf_{t\to 0^+} \frac {t^{\frac {n}{n-(s-1)}}}{\EE(t)} >0.
  \end{align*}
 The latter condition reads
 \begin{equation}\label{45}
    \liminf_{t\to 0^+} \frac 1{t^{\frac n{n-s} - \frac n{n-(s-1)}}\,  \int_t^\infty \frac{\widetilde{A}(\tau)}{\tau^{1+\frac n{n-s}}}\; d\tau}>0.
    \end{equation}
Inequality \eqref{45} actually holds,
since $\frac {n}{n-s} - \frac{n}{n-(s-1)} >0$, whence
\begin{align}\label{46}
   & \limsup_{t \to 0^+} t^{\frac {n}{n-s} - \frac{n}{n-(s-1)}} \int_t^\infty  \frac{\widetilde{A}(\tau)}{\tau^{1+\frac n{n-s}}}\; d\tau
    \\ & \leq
    \lim_{t \to 0^+}  \int_t^\infty
    \tau^{\frac {n}{n-s} - \frac{n}{n-(s-1)}} \frac{\widetilde{A}(\tau)}{\tau^{1+\frac n{n-s}}}\; d\tau
    = \int_0^\infty \frac{\widetilde{A}(\tau)}{\tau^{1+\frac n{n-(s-1)}}}\; d\tau < \infty , \nonumber
\end{align}
where the last inequality is due to equations \eqref{conv0'} and \eqref{ibero0}. (In fact, one can show that the limit in \eqref{46} equals $0$, whence the limit in \eqref{45} is $\infty$.) Hence, \eqref{42} is established.
\\Similarly, property \eqref{42'} is equivalent to
  \begin{equation}\label{47}
    \liminf_{t\to 0^+} \frac 1{\int_0^t \frac {\widetilde{A}(\tau)}{\tau^{1+\frac n{n-(s-1)}}}\; d\tau} >0,
    \end{equation}
an inequality which holds, since the limit trivially equals infinity.
\\
On the other hand, to show that
\begin{equation}\label{49}
    \nu_s(r) \gtrsim r \qquad \hbox{if}\;\; 0<r \leq 1
\end{equation}
it suffices to verify that
\begin{equation}\label{sep12'}
    \liminf_{r\to  0^+} \frac {\varrho_s(r)}r >0.
\end{equation}
The latter inequality
is equivalent to
\begin{equation}\label{50}
    \limsup_{t\to 0^+} t^{n-(s-1)}\FF^{-1}(t^{-n}) <\infty,
\end{equation}
and, hence, to
\begin{equation}\label{52}
    \liminf_{t \to \infty} \frac{\FF(t)}{t^{\frac n{n-(s-1)}}} >0.
\end{equation}
This inequality holds since the $\liminf$ in \eqref{52} is, in fact, a limit and agrees with
$$
    \int_0^\infty \frac{\widetilde{A} (t)}{t^{1+\frac {n}{n-(s-1)}}} \; dt.
$$
Inequality \eqref{49} is therefore established.
Equation \eqref{40} is fully proved.
\\
This concludes the proof of embedding \eqref{231<s<n}.
\\ As for the fact that the function $\sigma_s$ is a modulus of continuity, property (ii) of Propositions \ref{prop:E} and \ref{prop:F} ensure that $\lim_{r \to 0^+}\sigma_s(r)=0$. Furthermore, equations \eqref{july19},  \eqref{july57} and \eqref{july34} tell us that
\begin{equation}\label{dec210}
\vartheta _s(r) + \varrho _s(r) \approx  \Big \| \rho^{-1+ \frac sn}\, \chi_{(0, r^n)}(\rho) \Big\|_{L^{\widetilde{A}}(0, \infty) }  + r\, \Big \| \rho^{-1+ \frac {s-1}n}\, \chi_{(r^n, \infty)}(\rho) \Big\|_{L^{\widetilde{A}}(0, \infty) }
      \qquad \text{for $r>0$.}
      \end{equation}
 The right-hand side of equation \eqref{dec210} is equivalent to
      \begin{equation}\label{dec211}
      \Big \| \rho^{-1+ \frac sn}\, \chi_{(0, r^n)}(\rho)+ r\,  \rho^{-1+ \frac {s-1}n}\, \chi_{(r^n, \infty)}(\rho) \Big\|_{L^{\widetilde{A}}(0, \infty) }    \qquad \text{for $r> 0$.}
      \end{equation}
The norm in \eqref{dec211} is non-decreasing in $r$, since the function
$$(0,\infty) \ni r \mapsto \rho^{-1+ \frac sn}\, \chi_{(0, r^n)}(\rho)+ r\,  \rho^{-1+ \frac {s-1}n}\, \chi_{(r^n, \infty)}(\rho)$$
is non-decreasing in $r$ for each $\rho \in (0, \infty)$. Altogether,  $\vartheta _s + \varrho _s$ is equivalent to a non-decreasing function, whence $\sigma_s$ is also equivalent to a non-decreasing function.

As far as the optimality of $\sigma_s$ is concerned, assume that $\omega$ is a modulus of continuity such that
\begin{equation}\label{53}
  V^{s, A}_{d,1}(\rn)\to \mathcal{C}^{\omega(\cdot)} (\rn).
\end{equation}
Namely, there exists a constant $c$ such that
\begin{equation}\label{54}
    |u(x)- u(y)| \leq c\, \omega \left(|x-y|\right) \, \big|\nabla ^{[s]} u \big|_{\{s\}, A, \rn}
\end{equation}
 for $u\in  V^{s, A}_{d,1}(\rn)$.
\\
Assume first that condition \eqref{july30} holds.
Given any nonnegative, non-increasing function $f\in L^A(0, \infty)$ with  support contained in $[0,L]$ for some fixed $L$, define the function
$u \colon \rn \to [0, \infty)$ as
\begin{equation}\label{fi1}
    u(x)= \int_{\omega_n |x|^n}^\infty \int_{r_1}^\infty \cdots \int_{r_{[s]}}^\infty f(r_{[s] +1})\, r_{[s]+1}^{-[s] -1 + \frac sn}
\; dr_{[s]+1}\cdots dr_1\qquad \hbox{for $x\in \rn$}.
\end{equation}
One can verify that $u$ is $[s]$-times weakly differentiable, and that $|\{|\nabla u| ^k >t\}|<\infty$ for $k=0, 1, \dots, [s]$ and  every $t>0$. Owing to Fubini's Theorem, it follows that
\begin{equation}\label{fi2}
    u(x)= \frac 1{[s]!}\int_{\omega_n |x|^n}^\infty  f(r)\, r^{-[s] -1 + \frac sn} (r - \omega_n |x|^n)^{[s]}\; dr
    \qquad \hbox{for $x\in \rn$}.
\end{equation}
Since
\begin{align*}
    u(0) & =\frac 1{[s]!}
  \int_0^\infty  f(r)\, r^{ -1 + \frac sn}\; dr \geq
\frac 1{[s]!}\int_{\omega_n |x|^n}^\infty  f(r)\, r^{ -1 + \frac sn}\; dr
\\ & \geq \frac 1{[s]!}\int_{\omega_n |x|^n}^\infty  f(r)\, r^{-[s] -1 + \frac sn} (r - \omega_n |x|^n)^{[s]}\; dr =u(x) \qquad \hbox{for $x\in \rn$},
\end{align*}
we have that
\begin{align}\label{fi3}
   \big| u(0) - u(x)\big|& \geq \frac 1{[s]!}\bigg(
  \int_0^\infty  f(r)\, r^{ -1 + \frac sn}\; dr-
  \int_{\omega_n |x|^n}^\infty  f(r)\, r^{ -1 + \frac sn}\; dr\bigg )
   \\ \nonumber &=  \frac 1{[s]!}\int_0^{\omega_n |x|^n}  f(r)\, r^{ -1 + \frac sn}\; dr \qquad  \hbox{for $x\in \rn$.}
\end{align}
Thanks to \cite[Lemma 7.6]{ACPS_emb},
\begin{align}\label{sep105}
\big|\nabla ^{[s]} u \big|_{\{s\}, A, \rn}\lesssim \|f\|_{L^A(0,\infty)}.
\end{align}
Inequalities \eqref{54}, \eqref{fi3} and \eqref{sep105} imply that there exists a constant $c$ such that
\begin{align}
    \label{sep106}
    c&\geq \sup_{u\in V^{s,A}_{d,1}(\rn)}\, \sup_{x\in\rn} \frac{|u(x)-u(0)|}{\omega(|x|) \, \big|\nabla ^{[s]} u \big|_{\{s\}, A, \rn}} \\ \nonumber & \gtrsim
        \sup_{f} \, \sup_{\rho>0}\frac{\int_0^{\omega_n\, \rho^n} f(r)\, r^{-1 +\frac sn} \; dr}{\omega (\rho)\, \|f\|_{L^A(0, \infty)}}=
\sup_{\rho>0}\, \sup_{f} \frac{\int_0^{\omega_n\, \rho^n} f(r)\, r^{-1 +\frac sn} \; dr}{\omega (\rho)\, \|f\|_{L^A(0, \infty)}}.
\end{align}
Starting from equation \eqref{sep106} instead of \eqref{july18} and arguing as in the proof of Theorem \ref{thm:s<1} yields
\begin{align}\label{sep107}
    \vartheta_s(\rho) \lesssim \omega(\rho) \qquad \text{for  $\rho > 0$.}
\end{align}
In particular, the latter inequality  and equation \eqref{july19} (which holds for every $s\in (1,n)$) imply  that
$$
    \int ^\infty  \frac{\widetilde{A}(t)}{t^{1+\frac n{n-s}}}\; dt<\infty.
$$
Hence, condition \eqref{convinf}
must be fulfilled.
\\ Next, given any function $f$ as above, consider the function $u \colon  \rn \to \R$ defined as in \eqref{sep108}. Notice that $u(0)=0$. Hence,
\begin{align}
    \label{sep100}
    |u(x_1, 0, \cdots , 0)- u(0, 0, \cdots, 0)|
& =\frac {1}{[s]!} \, |x_1| \,\int_{\omega_n |x_1|^n}^\infty f(r)\,  r^{-[s]-1+ \frac{s-1}{n}} \, (r - \omega_n |x_1|^n)^{[s]} \; dr
\\ \nonumber & \geq
\frac {1}{[s]!} 2^{1-[s]}\, |x_1| \,\int_{2\omega_n |x_1|^n}^\infty f(r)\,  r^{-1+ \frac{s-1}{n}}  \; dr
\qquad \hbox{for}\;\; x_1\in \R,
\end{align}
where the equality follows from formula \eqref{sep108}, via Fubini's theorem.
The following chain holds:
\begin{align}\label{131}
c&\geq \sup_{ u\in V^{s, A}_{d,1}(\rn) }\frac{|u(x_1, 0, \cdots , 0)- u(0, 0, \cdots, 0)|}{\omega (|x_1|)|\nabla ^{[s]} u|_{\{s\}, A, \rn}}
\\ \nonumber &
\gtrsim\sup_{ f\downarrow, \,{  \texttt{supp} f \subset[0,L]}}\, \sup_{r >0}\frac {r^{1/n}}{\omega( r^{1/n})} \, \frac{ \int_{2 \omega_nr}^\infty f(\rho) \, \rho ^{-1+\frac{s-1}n}\,d\rho}{\|f\|_{L^A(0,\infty)}}
\\ \nonumber
&  =\sup_{r >0} \frac {r^{1/n}}{\omega( r^{1/n})} \, \sup_{ f\downarrow, {  \texttt{supp} f \subset[0,L]}}\, \frac{ \int_{2 \omega_nr}^\infty f(\rho) \, \rho ^{-1+\frac{s-1}n}\,d\rho}{\|f\|_{L^A(0,\infty)}}
\\ \nonumber &
   \approx \sup_{r >0} \frac {r^{1/n}}{\omega( r^{1/n})} \, \sup_{ f\geq 0, {  \texttt{supp} f \subset[0,L]}}\, \frac{ \int_{2 \omega_nr}^\infty f(\rho) \, \rho ^{-1+\frac{s-1}n}\,d\rho}{\|f\|_{L^A(0,\infty)}}
\\
\nonumber & \approx  \sup_{r >0} \frac {r^{1/n}}{\omega( r^{1/n})} \, \| \rho ^{-1+\frac{s-1}n}{ \, \chi_{[0,L]}(\rho)}\|_{L^{\widetilde A}({2 \omega_nr}, \infty)},
\end{align}
where the first inequality holds by
\eqref{54}, the second by \eqref{sep100}, the third by \eqref{sep109}, the first equivalence by Lemma \ref{lemma2} and the last equivalence by \eqref{revholder}.
\\
 Letting $L \to\infty$ in inequality \eqref{131} and making use of the Fatou property of Luxemburg norms tell us that
\begin{align}\label{131'}
c\geq \sup_{r >0}\frac 1{\omega(r)} \, r \| \rho ^{-1+\frac{s-1}n}\|_{L^{\widetilde A}({2 \omega_nr^n}, \infty)}\approx \sup_{r >0} \frac {\varrho_s ((2 \omega_n)^{\frac 1n} r)}{\omega( r)}
\approx \sup_{r >0} \frac {\varrho_s (r)}{\omega( r)}
\end{align}
for some constant $c$, where the first equivalence holds by equation \eqref{july34}, and the second by the definition of the function $\varrho_s$ and property \eqref{min} applied to $\FF$. Hence,
\begin{align}
    \label{sep113}
    \varrho_s(r) \lesssim \omega(r) \qquad \text{for  $r> 0$.}
\end{align}
In particular,  inequality \eqref{sep113} and equation \eqref{july34} tell us that
\begin{align}\label{sep120}
 \Big \| \rho^{-1+ \frac {s-1}n}\, \chi_{(r^n, \infty)}(\rho) \Big\|_{L^{\widetilde{A}}(0, \infty) }<\infty \qquad \text{for $r >0$.}
\end{align}
Since the finiteness of the norm in \eqref{sep120} is in turn equivalent to condition \eqref{conv0'}, the necessity of the latter follows.
\\
Coupling inequality \eqref{sep107} with \eqref{sep113} yields
\begin{align}\label{sep125}
  \sigma_s(r) \lesssim \omega(r) \qquad \text{for  $r> 0$,}
\end{align}
whence the optimality of the space $\mathcal{C}^{\sigma_s(\cdot)} (\rn)$ follows.
\\ Assume now that
 condition \eqref{24} is in force. Inequality \eqref{54} entails that
 \begin{align}
     \label{sep122}
     r \lesssim \omega (r) \qquad \text{if $0<r\leq1$.}
 \end{align}
 Moreover, the same argument as in the proof under condition \eqref{july30} implies that inequalities \eqref{sep107} and \eqref{sep125} continue to hold.
 Coupling these inequalities with \eqref{sep122} shows that
 \begin{align}
     \label{sep126}
\sigma_s (r) \lesssim \omega (r) \qquad \text{for $r>0$.}
 \end{align}
 This establishes the optimality of the space $\mathcal{C}^{\sigma_s(\cdot)} (\rn)
$. \\ Finally,
 as observed above, inequalities \eqref{sep107} and \eqref{sep125} imply that properties \eqref{convinf}
and \eqref{conv0'} have to be satisfied.
\end{proof}

\subsection{Proof of Theorem \ref{thm:n<s<n+1}: case $s\in (n,n+1)$}

The outline of the proof of Theorem \ref{thm:n<s<n+1} is similar to that of Theorem \ref{thm:1<s<n}. We shall thus focus on the novel aspects and refer to that proof when analogous arguments have to be employed.

\begin{proof}[Proof of Theorem \ref{thm:n<s<n+1}] Unless otherwise stated, the constants involved in this proof depend on $n$, $s$ and $A$.
\\ The fact that $\sigma_s$ is a modulus of continuity is a consequence of properties (ii) and (iii) of Proposition \eqref{prop:F}.
\\ Assume first that
 condition \eqref{july30} holds. The same argument as in the proof of Theorem \ref{thm:1<s<n} yields
inequality \eqref{july50}, with $\nu_s$ given by equation \eqref{33}. Thereby, embedding \eqref{23n<s<n+1} will follow if we show that
\begin{equation}\label{55}
\nu_s(r) \approx \varrho_s (r)\qquad   \hbox{for $r > 0$.}
\end{equation}
Thanks to equivalence \eqref{july57}, the proof of   equation \eqref{55} is reduced to proving
that
\begin{equation}\label{56}
\Big \| \rho^{-1+ \frac sn}\, \chi_{(0, r^n)}(\rho) \Big\|_{L^{\widetilde{A}}(0, \infty)}\lesssim\, \varrho_s (r) \qquad \hbox{for}\;\; r>0.
\end{equation}
We have that
\begin{align}\label{57}
\Big \| \rho^{-1+ \frac sn}\, \chi_{(0, t)}(\rho) \Big\|_{L^{\widetilde{A}}(0, \infty)}= \inf\bigg\{ \lambda >0: \int_0^t \widetilde{A}\bigg(\frac{\tau^{\frac {s-n}n}}{\lambda}\bigg) \; d\tau \leq 1\bigg\} \qquad \hbox{for $t> 0$}.
\end{align}
Since $\frac{s-n}n>0$,
\begin{align}\label{58}
&\int_0^t \widetilde{A}\bigg(\frac{\tau^{\frac {s-n}n}}{\lambda}\bigg) \; d\tau = \frac n{s-n} \, \lambda^{\frac n{s-n}} \int_0^{\frac{t^{\frac {s-n}{n}}}{\lambda}} \widetilde{A} (\tau) \, \tau^{\frac{n}{s-n} -1}\; d\tau
\\ &
= \frac n{s-n} \,\bigg(\frac{t^{\frac {s-n}{n}}}{\lambda}\bigg)^{\frac n{n-s}} \, t \int_0^{\frac{t^{\frac {s-n}{n}}}{\lambda}}
\widetilde {A}(\tau) \, \tau^{\frac{n}{s-n} -1}\; d\tau =
\frac n{s-n} \, t \, I\bigg(\frac{t^{\frac {s-n}{n}}}{\lambda}\bigg) \qquad \text{for $t > 0$,} \nonumber
\end{align}
where $I\colon  (0, \infty) \to [0, \infty)$ is the Young function defined as
\begin{equation}\label{59}
    I(t)= t^{\frac n{n-s}} \int_0^t \widetilde {A}(\tau) \, \tau^{\frac{n}{s-n} -1}\; d\tau \qquad \hbox{for}\;\; t >0.
\end{equation}
Equations \eqref{57}, \eqref{58} and \eqref{min} tell us that
\begin{equation}\label{60}
\Big \| \rho^{-1+ \frac sn}\, \chi_{(0, r^n)}(\rho) \Big\|_{L^{\widetilde{A}}(0, \infty)}=\frac 1{ r^{n-s} \, I^{-1} \left(\frac{s-n}{n}\,  r^{-n}\right)}\approx \frac 1{ r^{n-s} \, I^{-1} \left( r^{-n}\right)} \qquad \text{for $r > 0$,}
\end{equation}
with equivalence constants depending on $n$ and $s$.
By property \eqref{AAtilde},
$$\frac {1}{r^{n-s}\, \FF^{-1}(r^{-n})} \leq \varrho_s (r)\leq \frac {2}{r^{n-s}\, \FF^{-1}(r^{-n})} \qquad \text{for $r>0$.}$$
Inequality \eqref{56} is therefore equivalent to
\begin{equation}\label{61}
    \frac {1}{r^{n-s}\, I^{-1}(r^{-n})} \leq    \frac {c}{r^{n-s}\, \FF^{-1}(r^{-n})}
    \qquad \hbox{for} \;\; r>0,
\end{equation}
for some constant $c=c(n,s)$.
 Inequality \eqref{61} is in turn equivalent to
\begin{equation}\label{62}
\FF(t) \geq I(t/c)
\qquad \hbox{for} \;\; t>0.
\end{equation}
Since the function $\widetilde{A}(r)\, r^{\frac n{s-n}-1}$ is increasing,
\begin{equation}\label{63}
    I(t) = t^{\frac n{n-s}}
\int_0^t \widetilde{A}(\tau)\, \tau^{\frac n{s-n}-1}\; d\tau \leq
 t^{\frac n{n-s}}
\widetilde{A}(t)\, t^{\frac n{s-n}-1} \int_0^t \; d\tau = \widetilde{A}(t) \qquad \text{for $t>0$.}
\end{equation}
Moreover,
\begin{align}\label{64}
    \FF(t) &= t^{\frac n{n-(s-1)}}
\int_0^t \frac{\widetilde{A}(\tau)}{\tau^{1+ \frac{n}{n-(s-1)}}}\; d\tau \geq
 t^{\frac n{n-(s-1)}} \int_{\frac t2}^t \frac{\widetilde{A}(\tau)}{\tau^{1+ \frac{n}{n-(s-1)}}}\; d\tau
 \\ &
  \geq
 t^{\frac n{n-(s-1)}}  \widetilde{A} (t/2) \int_{\frac t2}^t \tau^{-1-\frac{n}{n-(s-1)}} \;d\tau
 = c \, \widetilde{A} (t2) \geq
 \widetilde{A} (c' \, t)\nonumber \qquad \text{for $t>0$,}
\end{align}
and for some constant $c=c(n,s)$, where $c'=\frac{\min\{1, c\}}2$.
Inequality \eqref{62} follows from \eqref{63} and \eqref{64}.
\\
Next, suppose that condition\eqref{24} is in force. The same argument as in the proof of Theorem \ref{thm:1<s<n} tells us that embedding \eqref{july58} holds with $\mu_s$ satisfying \eqref{39}, where now $\nu_s$ fulfills  \eqref{55}. Embedding \eqref{23n<s<n+1} hence follows via properties \eqref{sep12} and \eqref{sep12'}.
\\ The optimality of the modulus of continuity in embedding \eqref{23n<s<n+1} and the necessity of condition \eqref{conv0'} can be established via the same argument as in the proof of Theorem \ref{thm:1<s<n}. In the present situation, one has only to make use of trial functions of the form \eqref{sep108}.
\end{proof}

\bigskip

\bigskip{}{}

 \par\noindent {\bf Data availability statement.} Data sharing not applicable to this article as no datasets were generated or analysed during the current study.

\section*{Compliance with Ethical Standards}\label{conflicts}

\smallskip
\par\noindent
{\bf Funding}. This research was partly funded by:
\\ (i) GNAMPA   of the Italian INdAM - National Institute of High Mathematics (grant number not available)  (A. Alberico, A. Cianchi);
\\ (ii) Research Project   of the Italian Ministry of Education, University and
Research (MIUR) Prin 2017 ``Direct and inverse problems for partial differential equations: theoretical aspects and applications'',
grant number 201758MTR2 (A. Cianchi);
\\ (iii) Research Project   of the Italian Ministry of Education, University and
Research (MIUR) Prin 2022 ``Partial differential equations and related geometric-functional inequalities'',
grant number 20229M52AS, cofunded by PNRR (A. Cianchi);
\\  (iv) Grant no.\ 23-04720S of the Czech Science
Foundation  (L. Pick, L. Slav\'ikov\'a);
\\ (v) Primus research programme PRIMUS/21/SCI/002 of Charles University (L. Slav\'ikov\'a);
\\ (vi) Charles University Research Centre program No.\ UNCE/24/SCI/005 (L. Slav\'ikov\'a).

\bigskip
\par\noindent
{\bf Conflict of Interest}. The authors declare that they have no conflict of interest.

\

\end{document}